\numberwithin{equation}{section}
\newcommand{\bG}{\mathbb{G}}
\newcommand{\bQ}{\mathbb{Q}}
\newcommand{\bR}{\mathbb{R}}
\newcommand{\bT}{\mathbb{T}}
\newcommand{\bZ}{\mathbb{Z}}
\newcommand{\cE}{\mathcal{E}}
\newcommand{\cP}{\mathcal{P}}%Steenrod operation
\newcommand{\cotimes}{\text{\small $\bigotimes$}}
\newcommand{\ep}{\varepsilon}
\newcommand{\Ext}{\mathrm{Ext}}
\newcommand{\im}{\mathsf{Im}}
\newcommand{\Int}{\mathsf{Int}}
\newcommand{\bk}{{\mathbf k}}
\newcommand{\one}{\mathds{1}}
\newcommand{\Sq}{\mathsf{Sq}}%Steenrod operation
\newcommand{\Ver}{\mathsf{Ver}}%Verlinde category
\newcommand{\zpi}{\bZ[\frac{1}{p}]}%Z[p inverse]
\newcommand{\zti}{\bZ[\frac{1}{2}]}%Z[2 inverse]
\renewcommand{\ge}{\geqslant}
\renewcommand{\le}{\leqslant}
\author{David Benson}
\email{d.j.benson@abdn.ac.uk}
\address{Institute of Mathematics, University of Aberdeen, Aberdeen
  AB24 3UE, United Kingdom}
\author{Pavel Etingof}
\email{etingof@math.mit.edu}
\address{Department of Mathematics, Massachusetts Institute of
  Technology, Cambridge, MA 02139, USA}
\title{On cohomology in symmetric tensor categories in prime characteristic}
\keywords{Symmetric tensor category, cohomology ring, Gorenstein algebra,
Minc's partition function, Steenrod operations}
\subjclass{Primary: 18M20. Secondary: 13H10, 16E30, 55S10.}
\begin{document}

\begin{abstract} 
We describe graded commutative Gorenstein 
algebras $\cE_n(p)$ over a field of
characteristic $p$, and we conjecture that 
$\Ext^\bullet_{\Ver_{p^{n+1}}}(\one,\one)\cong\cE_{n}(p)$, where $\Ver_{p^{n+1}}$
are the new symmetric
tensor categories recently constructed 
in~\cite{Benson/Etingof:2019a,Benson/Etingof/Ostrik,Coulembier}.
We investigate the combinatorics of these
algebras, and the relationship with Minc's partition function, as well
as possible actions of the Steenrod operations on them.

Evidence for the conjecture includes a large number of computations
for small values of $n$. We also provide some theoretical evidence.
Namely, we use a Koszul construction to 
identify a homogeneous system of parameters in $\cE_n(p)$
with a homogeneous system of parameters in 
$\Ext^\bullet_{\Ver_{p^{n+1}}}(\one,\one)$. These parameters have 
degrees $2^i-1$ if $p=2$ and $2(p^i-1)$ if $p$ is odd, for $1\le i \le n$.
This at least shows that
$\Ext^\bullet_{\Ver_{p^{n+1}}}(\one,\one)$ 
is a finitely generated graded commutative algebra with
the same Krull dimension as $\cE_n(p)$. For $p=2$ we also show that 
 $\Ext^\bullet_{\Ver_{2^{n+1}}}(\one,\one)$ has the expected rank $2^{n(n-1)/2}$ as a module over the subalgebra of parameters. 
\end{abstract}

\maketitle

%\tableofcontents

\section{Introduction}

In our paper~\cite{Benson/Etingof:2019a}, we introduced a nested
sequence of incompressible symmetric tensor abelian categories
in characteristic two. These were very recently generalised to all primes in
our work with Ostrik~\cite{Benson/Etingof/Ostrik} and simultaneously
by Coulembier~\cite{Coulembier}. These categories, $\Ver_{p^n}$ and
$\Ver_{p^n}^+$, seem to be new fundamental
objects deserving further study. 

Here, our aim is to state a conjecture describing the ring structure
of $\Ext^\bullet_{\Ver_{p^{n+1}}}(\one,\one)$.
We have made large numbers of computations using the computer
algebra system {\sc Magma}~\cite{Bosma/Cannon/Playoust:1997a},
and we conjecture that the answer should be the graded commutative
$\bk$-algebra $\cE_n(p)$ introduced below, where $\bk$ is a field of
characteristic $p$. After defining these algebras, we prove the following.

\begin{theorem}
For $n\ge 0$, the algebra $\cE_n(p)$ is a graded commutative finitely generated
Gorenstein $\bk$-algebra of Krull dimension $n$. 
If $p=2$ then it is
an integral domain, while for $p$ odd it has nilpotent elements.
The Poincar\'e series $f(q)=\displaystyle\sum_{d\ge 0}q^d\dim\cE_n(p)_d$ is a
rational function of $q$ satisfying $f(1/q)=(-q)^nf(q)$.
\end{theorem}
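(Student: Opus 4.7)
The plan is to work directly from the presentation of $\cE_n(p)$ by generators and relations. Graded commutativity should be immediate from the defining relations, so the first substantive task is to establish finite generation and Krull dimension. The natural candidates are an explicit homogeneous system of parameters $\theta_1,\dots,\theta_n$ of degrees $2^i-1$ (when $p=2$) or $2(p^i-1)$ (when $p$ is odd), matching the degrees flagged in the abstract. I would exhibit these $\theta_i$ in the presentation and then argue that the quotient $\bar{\cE}_n(p):=\cE_n(p)/(\theta_1,\dots,\theta_n)$ is finite dimensional over $\bk$. This simultaneously yields the upper bound on Krull dimension and, together with algebraic independence of the $\theta_i$, the lower bound; finite generation of $\cE_n(p)$ over $\bk$ then follows, since it is a finitely generated module over the Noetherian subring $\bk[\theta_1,\dots,\theta_n]$.

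For the Gorenstein property I would verify that $\bar{\cE}_n(p)$ is a Poincar\'e duality algebra, i.e.\ has one-dimensional socle, since this is equivalent to $\cE_n(p)$ itself being Gorenstein once $\theta_1,\dots,\theta_n$ is known to be a regular sequence. The cleanest approach is to write down an explicit monomial basis of $\bar{\cE}_n(p)$, single out a top-degree element, and show that the pairing into the top degree is nondegenerate. I expect this to be the main obstacle: it is where the specific combinatorics of the relations enters, and without Poincar\'e duality nothing further works.

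Given the Gorenstein property, the functional equation is a routine consequence of Stanley's theorem. If $a$ denotes the socle degree of $\bar{\cE}_n(p)$, its Hilbert polynomial $g(q)$ is palindromic, $g(1/q)=q^{-a}g(q)$, and
\[
f(q)\;=\;\frac{g(q)}{\prod_{i=1}^n (1-q^{d_i})}
\]
then gives $f(1/q)=(-1)^n q^{\,a-\sum d_i}f(q)$. The assertion $f(1/q)=(-q)^n f(q)$ therefore reduces to the identity $a=\sum_{i=1}^n d_i+n$, which I would verify by direct computation from the explicit degrees of parameters and the top-degree element.

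Finally, for the dichotomy between $p=2$ and $p$ odd: in characteristic $2$ one expects the presentation to realise $\cE_n(2)$ as a subring (or quotient by a prime ideal) of a polynomial ring, so the domain property can be read off once the explicit presentation is in hand. For $p$ odd, graded commutativity forces generators of odd degree to square to zero, and the defining relations should include such generators as part of the parameter system or among the subsidiary generators dual to them; identifying even one such odd-degree generator produces the required nilpotent element.
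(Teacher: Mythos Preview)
Your overall strategy—explicit homogeneous parameters, show the quotient is Frobenius (Poincar\'e duality), deduce Gorenstein, then read off the functional equation—is exactly what the paper does. But there is a real gap: you never explain why $\theta_1,\dots,\theta_n$ is a \emph{regular} sequence. Knowing that the Krull dimension is $n$ and that the quotient by $n$ parameters is finite dimensional does not give regularity unless you already know the ring is Cohen--Macaulay; and without regularity, Poincar\'e duality of $\bar{\cE}_n(p)$ does not imply $\cE_n(p)$ is Gorenstein. The paper handles this via the definition $\cE_n(p)=\Int(R(n,p))$, where $R(n,p)$ is the $\bZ[1/p]$-graded polynomial (for $p=2$) or polynomial $\otimes$ exterior (for $p$ odd) ring. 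Since $R(n,p)$ is trivially Cohen--Macaulay and the Reynolds operator $\rho\colon R\to\Int(R)$ splits the inclusion as $\Int(R)$-modules, Hochster--Eagon gives that $\cE_n(p)$ is Cohen--Macaulay; then the $y_i=x_i^{p^i}$ are automatically a regular sequence. You need this step, or a substitute, before your Gorenstein argument works.

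Two smaller points. First, there is a sign slip in your functional-equation computation: from $g(1/q)=q^{-a}g(q)$ and $f(q)=g(q)/\prod_i(1-q^{d_i})$ one gets $f(1/q)=(-1)^n q^{\sum d_i - a}f(q)$, so the identity to check is $a=\sum_i d_i - n$, not $+n$ (and indeed the explicit socle elements $\alpha$ in the paper satisfy this). Second, the paper does \emph{not} work from a presentation by generators and relations; it works throughout with $\cE_n(p)$ as the integer-degree subring of $R(n,p)$. This viewpoint is what makes the integral-domain claim for $p=2$ immediate (subring of a polynomial ring), produces the nilpotents for $p$ odd (the $\xi_i$ land in monomials of $\cE_n(p)$), and makes the Frobenius duality transparent via explicit complementary monomials multiplying to $\alpha$.
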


There are natural inclusion maps $\cE_{n-1}(p)\to\cE_{n}(p)$, and in each degree the
sequence
\[ \bk=\cE_{0}(p)\to \cE_{1}(p) \to \cdots \to \cE_{n-1}(p) \to \cE_n(p) \to \cdots \]
stabilises at some finite stage. So it makes sense to examine the colimit
$ \cE_\infty(p) = \displaystyle\lim_{\substack{\longrightarrow \\ n}}\cE_n(p). $
The Poincar\'e series of this algebra in the case $p=2$ is
Minc's partition function~\cite{Minc:1959a}. We adapt
Andrews' proof of a Rogers--Ramanujan style formula~\cite{Andrews:1981a} 
for the reciprocal of the generating function for this partition function
so that it gives us the Poincar\'e series for $\cE_n(p)$ for all
$n\ge 0$ and all primes $p$.

\begin{theorem}
The dimension of $\cE_n(p)_d$ is equal to $\displaystyle\sum_{m=1}^n N_p(m,d)$,
and the dimension of $\cE_\infty(p)_d$ is equal to
$\displaystyle\sum_{m=1}^\infty N_p(m,d)$, 
where
$\displaystyle \sum_{m,d=0}^\infty N_p(m,d)t^mq^d \ = \ \frac{1}
{\sum_{i=0}^\infty(-1)^it^i\ell_{i,p}(q)} $
and\vspace{-5mm}
\[ \ell_{i,p}(q)=\begin{cases} \displaystyle
\prod_{j=1}^i\frac{q^{2^j-1}}{1-q^{2^j-1}} &p=2 \\ 
\displaystyle
\prod_{j=1}^i\frac{q^{2p^{j-1}(p-1)-1}+q^{2(p^j-1)}}{1-q^{2(p^j-1)}}
& p\text{ odd.} \end{cases} \]
\end{theorem}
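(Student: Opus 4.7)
The plan is to first establish a combinatorial (monomial) description of $\cE_n(p)$, and then to prove the generating function identity by adapting Andrews' Rogers--Ramanujan-style argument for Minc's partition function.

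Based on the expected presentation of $\cE_n(p)$ (not yet visible in the excerpt but arising from its construction as a graded commutative Gorenstein algebra with specified generators of degrees $2^i-1$ or $2(p^i-1)$, plus odd-degree nilpotents for odd $p$), the first step is to exhibit a monomial basis indexed by certain partition-like sequences. Let $N_p(m,d)$ denote the number of such basis elements of ``length'' $m$ and total degree $d$. The first claim $\dim\cE_n(p)_d=\sum_{m=1}^n N_p(m,d)$ then drops out by partitioning the basis according to length, since the inclusion $\cE_{n-1}(p)\hookrightarrow\cE_n(p)$ adds precisely the length-$n$ monomials. The statement for $\cE_\infty(p)$ follows by passing to the colimit, noting that in each fixed degree $d$ the length is bounded (because every generator has strictly positive degree), so only finitely many $m$ contribute.

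For the generating function identity, set $F_p(t,q)=\sum_{m,d\ge 0}N_p(m,d)t^mq^d$ and $G_p(t,q)=\sum_{i\ge 0}(-1)^it^i\ell_{i,p}(q)$; the aim is to prove $F_p\cdot G_p=1$. Adapting Andrews' method, I would condition on a suitable extremal feature of the sequences (e.g.\ the size or multiplicity of the smallest part) to derive a $q$-difference equation for $F_p$, of the shape $F_p(t,q)=F_p(tq^a,q)+t\,R(q)\,F_p(tq^b,q)$ for appropriate $a$, $b$, and rational function $R(q)$. The corresponding recurrence for $1/G_p(t,q)$ is then verified directly from the product formula defining $\ell_{i,p}$, since the ratio $\ell_{i,p}/\ell_{i-1,p}$ has a clean closed form that matches $R(q)$. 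Together with the initial condition $F_p(0,q)=1=1/G_p(0,q)$, this forces $F_p\cdot G_p=1$.

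The main obstacle lies in the odd-prime case, where the numerator $q^{2p^{j-1}(p-1)-1}+q^{2(p^j-1)}$ of each factor of $\ell_{i,p}(q)$ is a \emph{sum} of two monomials. This reflects the coexistence of odd-degree nilpotent and even-degree polynomial generators in $\cE_n(p)$ for odd $p$, and the conditioning step in the combinatorial recurrence must branch into two cases in parallel. Correctly identifying which combinatorial move on partitions corresponds to each numerator term, and verifying that the specific exponents $2(p^j-1)$ and $2p^{j-1}(p-1)-1$ come out uniformly in one Andrews-type argument covering both $p=2$ (where only the even-degree generators survive and Andrews' original identity is recovered) and odd $p$, is the technical heart of the proof.
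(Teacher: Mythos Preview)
Your setup is correct: the monomial basis of $\cE_n(p)_d$ is partitioned according to the largest index $m$ of a variable appearing with positive exponent, which gives $\dim\cE_n(p)_d=\sum_{m=1}^n N_p(m,d)$, and stability in the colimit follows because every generator has positive degree. This matches the paper.

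For the generating function identity, however, your proposed mechanism does not match the paper's and, as stated, is unlikely to work. You suggest a two-term $q$-difference equation $F_p(t,q)=F_p(tq^a,q)+tR(q)F_p(tq^b,q)$ obtained by conditioning on an extremal part. But the constraint governing these sequences (for $p=2$, after the change of variables $z_i=x_{i-1}^{-1}x_i^2$, one gets sequences $(b_1,\dots,b_m)$ with $2b_i\ge b_{i+1}$) links \emph{consecutive} parts multiplicatively; removing an extremal part does not simply shift the rest by a fixed amount, so no functional equation of the shape you wrote emerges.

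What the paper (and Andrews) actually does is different and more direct. Writing $\mu_m=\sum_{b_1}\sum_{b_2\le 2b_1}\cdots\sum_{b_m\le 2b_{m-1}}q_1^{b_1}\cdots q_m^{b_m}$, one evaluates the \emph{innermost} finite geometric sum over $b_m$, producing $\mu_m=\tfrac{q_m}{1-q_m}(\mu_{m-1}-[\text{correction}])$, where the correction is the same kind of nested sum with a rescaled last variable. Iterating this peeling yields the \emph{convolution} recursion
\[
\sum_{i=0}^m(-1)^i\ell_{i,p}\,\mu_{m-i}=\delta_{m,0},
\]
which, after multiplying by $t^m$ and summing over $m$, is literally $F_p\cdot G_p=1$. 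For odd $p$ the same method works verbatim: the innermost sum is $\sum_{b+\ep=1}^{s}q^bw^\ep=\tfrac{(w+q)(1-q^s)}{1-q}$, and the factor $(w+q)$, specialised to $w=q^{2p-3}$, $q\mapsto q^{2p-2}$, is exactly the two-term numerator $q^{2p^{j-1}(p-1)-1}+q^{2(p^j-1)}$ you flagged. No separate branching argument is needed.

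So the missing idea is that the correct recursion is of convolution type in $m$, not of $q$-shift type in $t$, and the route to it is iterated geometric summation over the last variable rather than conditioning on an extremal part.
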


The relationship with the symmetric tensor abelian categories constructed
in~\cite{Benson/Etingof:2019a,Benson/Etingof/Ostrik,Coulembier} is as
follows. Since the subcategory $\Ver_{p^n}^+\subset \Ver_{p^n}$ is a direct summand, 
this inclusion induces an isomorphism 
$  \Ext^\bullet_{\Ver_{p^n}}(\one,\one)\cong\Ext^\bullet_{\Ver^+_{p^n}}(\one,\one) $
and so we only consider $\Ver_{p^n}$.

\begin{conjecture}\label{mainconj}
The graded commutative $\bk$-algebra 
$\Ext^\bullet_{\Ver_{p^{n+1}}}(\one,\one)$ is isomorphic to
$\cE_{n}(p)$. The inclusion $\Ver_{p^n}\subset \Ver_{p^{n+1}}$
induces the inclusion map $\cE_{n-1}(p)\to \cE_{n}(p)$.
\end{conjecture}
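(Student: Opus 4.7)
The plan is to construct an explicit graded $\bk$-algebra homomorphism $\varphi_n\colon \cE_n(p)\to \Ext^\bullet_{\Ver_{p^{n+1}}}(\one,\one)$ and then prove it is an isomorphism, proceeding by induction on $n$ using the compatibility with the inclusion maps asserted in the conjecture. The base case $n=0$ is immediate since $\Ver_p$ is semisimple and $\cE_0(p)=\bk$. For the inductive step, the Koszul construction that the abstract describes already produces a homogeneous system of parameters on the Ext side in the correct degrees $2^i-1$ (for $p=2$) or $2(p^i-1)$ (for $p$ odd), and these are the natural targets for the images under $\varphi_n$ of the generators of the parameter subalgebra of $\cE_n(p)$. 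Compatibility with $\varphi_{n-1}$ under $\Ver_{p^n}\subset\Ver_{p^{n+1}}$ forces the definition on everything coming from $\cE_{n-1}(p)$, leaving only the genuinely new generators at step $n$ to be lifted by hand.

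To see that $\varphi_n$ is well defined---that is, that the defining relations of $\cE_n(p)$ hold in the target---the cleanest route would be to realise $\cE_n(p)$ itself as the cohomology of a specific Koszul-type complex built inside $\Ver_{p^{n+1}}$, so that generators and relations are produced simultaneously by a convergent spectral sequence, bypassing any case-by-case verification. Once $\varphi_n$ is in place, surjectivity should follow from the statement (established for $p=2$ in the paper) that $\Ext^\bullet_{\Ver_{p^{n+1}}}(\one,\one)$ is a finitely generated module of the expected rank over the parameter subalgebra, combined with a count of the image of $\cE_n(p)$ modulo its parameters. Injectivity then reduces to matching Poincar\'e series: the series of $\cE_n(p)$ is computed explicitly by the Rogers--Ramanujan-type formula stated above, and for a surjection between graded vector spaces with equal Poincar\'e series the conclusion is automatic.

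The main obstacle is the odd-prime case. For $p=2$ the paper already pins down both the parameter subring and its rank in the Ext ring, so the induction above should push through with concrete bookkeeping on the ``new'' generators appearing at each $n$. For $p$ odd, neither the rank over the parameter subalgebra nor the full Poincar\'e series has been established, and the nilpotent classes that $\cE_n(p)$ predicts are precisely the ones not automatically produced by the Koszul construction. A plausible strategy is to exploit Steenrod operations on $\Ext^\bullet_{\Ver_{p^{n+1}}}(\one,\one)$---one of the explicit themes of the paper---to manufacture nilpotent classes out of the parameters, and to control the remaining extension problem via an auxiliary spectral sequence arising from the filtration of $\Ver_{p^{n+1}}$ by the smaller Verlinde categories. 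Managing the differentials and extensions of that spectral sequence is where the serious difficulty lies, and it is the natural place at which a new idea will be required.
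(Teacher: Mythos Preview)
The statement you are attempting to prove is labelled \textbf{Conjecture} in the paper, and the paper does not prove it. What the paper supplies is evidence: extensive Magma computations for small $n$, and theoretical partial results (existence of the parameter subalgebra $\bk[y_1,\dots,y_n]$ inside the Ext ring, finite generation over it, and for $p=2$ the correct generic rank $2^{n(n-1)/2}$). There is therefore no ``paper's own proof'' to compare your proposal against; your write-up is a strategy for attacking an open problem.

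As a strategy it has real gaps, already for $p=2$. First, the paper does \emph{not} construct a map out of $\cE_n(p)$; it only constructs $\phi\colon\bk[y_1,\dots,y_n]\to\Ext^\bullet_{\Ver_{p^{n+1}}}(\one,\one)$ on the parameter subalgebra. Extending $\phi$ to the remaining generators of $\cE_n(p)$ and checking the relations is the heart of the matter, and your proposal only gestures at a ``Koszul-type complex'' that would do this; no such identification is carried out in the paper, and producing one is itself a substantial piece of the conjecture. Second, the rank statement the paper proves for $p=2$ is a \emph{generic} rank (Corollary~\ref{co:recu} and the paragraph following it work after tensoring with $F_n=\bk(y_1,\dots,y_{n-1})$); freeness over $\bk[y_1,\dots,y_n]$ is explicitly stated there as an expectation, not a theorem. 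So your surjectivity argument, which needs control of the quotient by the parameters, does not follow from what is available. Third, your injectivity step requires knowing the Poincar\'e series of the Ext ring, but that is precisely what is unknown: the Rogers--Ramanujan formula computes the Poincar\'e series of $\cE_n(p)$, not of $\Ext^\bullet_{\Ver_{p^{n+1}}}(\one,\one)$, and equating the two is the content of the conjecture itself. Your argument is thus circular at this step.

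In short, the proposal correctly identifies the natural inductive architecture and the relevant tools (the Koszul construction, the parameter subalgebra, the inclusion $\Ver_{p^n}\subset\Ver_{p^{n+1}}$), but each of the three steps---defining $\varphi_n$ beyond the parameters, proving surjectivity, and matching Poincar\'e series---presupposes exactly the missing information. The conjecture remains open.
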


We have the following computational evidence for this conjecture. In
all characteristics, this is true for $n\le 1$. In characteristic two,
we have checked both the dimensions and the algebra structure
for $n=2$ in all degrees, for $n=3$ up to degree $40$, and for $n=4$
up to degree $26$. For $p=3$, $n=2$, $3$, and for $p=5$, $n=2$, 
we have checked the dimensions
and algebra structure up to degree $40$. These computations were
carried out using the computer algebra package {\sc Magma}.\medskip

In a symmetric tensor abelian category, the Steenrod operations
act on $\Ext^\bullet(\one,\one)$ and satisfy the Cartan formula
and unstable condition, 
as well as the homogeneous form of the Adem relations
in which it is not assumed that the operation $\Sq^0$ ($p=2$), respectively 
$\cP^0$ ($p$ odd) acts as the identity (see~\cite{May:1970a}; the construction 
there extends to the setting of symmetric tensor categories). We investigate the
possibilities for their action on $\cE_n(p)$. Our conclusions are
cleanest when $p=2$. In that case, we show that the only possible
action of the Steenrod operations on $\cE_n(2)$ compatible with the
inclusions is that all $\Sq^i=0$ except for the mandatory
$\Sq^{|x|}(x)=x^2$. This makes the action much more like that on
the cohomology of a $p$-restricted Lie algebra than like that on the cohomology of a
finite group. In the case $p$ odd, the existence of nilpotent elements
interferes with the arguments, and we can only prove a weaker statement.

In the final sections, we provide some theoretical evidence for 
Conjecture \ref{mainconj}, and some tools that may help prove it. Namely, we first consider the  
Koszul complex of the generating object $V$ of $\Ver_{p^{n+1}}$ 
and compute its cohomology. Then we use the Koszul complex to 
express $\Ext^\bullet_{\Ver_{p^{n+1}}}(\one,X)$ as the cohomology 
of an explicit complex of vector spaces. While we cannot yet 
compute this cohomology in general, this construction explains 
the conjectural shape of the answer and provides upper bounds 
for dimensions of the individual Ext spaces. In particular, it implies 
the existence of the subalgebra of parameters, 
$\bk[y_1,...,y_n]\subset \Ext^\bullet_{\Ver_{p^{n+1}}}(\one,\one)$, 
where $\deg(y_i)$ equals $2^i-1$ if $p=2$ and $2(p^i-1)$ if
$p>2$. We show that $\Ext^\bullet_{\Ver_{p^{n+1}}}(\one,\one)$ is 
module-finite over this subalgebra, and for $p=2$ show that the 
rank of this module is $2^{n(n-1)/2}$, as predicted by Conjecture \ref{mainconj}.

More generally, we at least show the following.

\begin{theorem}
The graded commutative $\bk$-algebra
$\Ext^\bullet_{\Ver_{p^{n+1}}}(\one,\one)$
is finitely generated, with Krull dimension $n$.
Moreover, for any $X\in \Ver_{p^{n+1}}$,  
$\Ext^\bullet_{\Ver_{p^{n+1}}}(\one,X)$ is a finitely generated module 
over this algebra. 
\end{theorem}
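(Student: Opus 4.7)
The plan is to follow the roadmap indicated in the introduction: first build the Koszul complex of the generating object $V$ of $\Ver_{p^{n+1}}$ and compute its cohomology, then use it to express $\Ext^\bullet_{\Ver_{p^{n+1}}}(\one,X)$ as the cohomology of an explicit complex of $\bk$-vector spaces, and finally exhibit a polynomial subalgebra $\bk[y_1,\dots,y_n]\subset \Ext^\bullet_{\Ver_{p^{n+1}}}(\one,\one)$ over which $\Ext^\bullet_{\Ver_{p^{n+1}}}(\one,X)$ is finitely generated as a module.

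First I would construct the Koszul complex $K^\bullet$ built from $V$ inside $\Ver_{p^{n+1}}$, using symmetric and exterior powers in the categorical sense, and verify that its cohomology is controllable (concentrated in a manageable range of bidegrees). Tensoring with $X$ and taking morphisms out of $\one$ then gives an explicit complex $C^\bullet(X)$ of $\bk$-vector spaces whose cohomology computes $\Ext^\bullet_{\Ver_{p^{n+1}}}(\one,X)$. Within this complex I would identify cocycles $y_1,\dots,y_n$ in $\Ext^\bullet_{\Ver_{p^{n+1}}}(\one,\one)$ of degrees $2^i-1$ if $p=2$ and $2(p^i-1)$ if $p$ is odd. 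Since these degrees are even for odd $p$ (and characteristic $2$ allows genuine commutativity), the subalgebra they generate is polynomial on $n$ variables as soon as algebraic independence is verified; this independence I would extract by passing to the associated graded of $C^\bullet(X)$ with respect to a natural filtration by $S^\bullet(V)$-degree, where each $y_i$ should project to a recognisable nonzero class.

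The core of the argument is to show that $H^\bullet(C^\bullet(X))$ is a finitely generated $\bk[y_1,\dots,y_n]$-module. The plan is to filter $C^\bullet(X)$ by symmetric-algebra degree and run the resulting spectral sequence: the $E_1$-page can be computed in terms of the representation theory of $V$ in $\Ver_{p^{n+1}}$, and multiplication by each $y_i$ acts through an identifiable piece of the differential. Finiteness of $E_\infty$ as a module over the associated graded parameter subring then lifts back to module-finiteness of $H^\bullet$ by a standard filtered-Nakayama argument. Once this is in hand, $\Ext^\bullet_{\Ver_{p^{n+1}}}(\one,\one)$ is a finite extension of a Noetherian polynomial ring, hence finitely generated as a $\bk$-algebra, and its Krull dimension is at most $n$ by module-finiteness and at least $n$ by algebraic independence of the parameters.

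The main obstacle will be the spectral sequence / filtered-module step: one must control how the differentials interact with the parameter subalgebra well enough to extract module-finiteness uniformly in $X$. This reduces to a sharp understanding of how the symmetric powers $S^k(V)$ decompose in $\Ver_{p^{n+1}}$ and of the Koszul cohomology of $V$, which is precisely where the delicate combinatorics of the category enter. For $p=2$ the expected rank $2^{n(n-1)/2}$ matching Conjecture~\ref{mainconj} serves as a useful sanity check on the count, while for odd $p$ the presence of nilpotents in $\cE_n(p)$ warns that some care is needed to ensure the $y_i$ act as a genuine homogeneous system of parameters rather than as zero-divisors.
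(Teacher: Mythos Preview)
Your overall architecture—Koszul complex, reduce $\Ext$ to the cohomology of an explicit complex, extract a polynomial subalgebra, prove module-finiteness—matches the paper's. But there is a concrete gap in how you propose to obtain the $n$ parameters and to reach a complex of honest vector spaces.

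A single Koszul complex $S^\bullet V \otimes \Lambda V$, with $V$ the generator of $\Ver_{p^n}$ (so that $\Lambda V$-modules give the principal block of $\Ver_{p^{n+1}}$), has cohomology concentrated in exactly two degrees and hence yields a periodic (antiperiodic for odd $p$) free $\Lambda V$-resolution of $\one$. This produces \emph{one} periodicity class $y_n$, not $n$ of them. Filtering by $S^\bullet V$-degree, as you suggest, gives at best a spectral sequence whose $E_1$-page is $\Ext^\bullet_{\Ver_{p^n}}(\one, S^\bullet V \otimes X)$: still an Ext in a smaller Verlinde category, not a complex of $\bk$-vector spaces, and still carrying only the single parameter $y_n$. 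So the steps ``identify cocycles $y_1,\dots,y_n$ in this complex'' and ``filter by symmetric-algebra degree'' do not yet get off the ground; there is no single symmetric algebra whose associated graded sees all of $y_1,\dots,y_n$.

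The paper's key move is to \emph{iterate}: for every $k\le n$ the principal block of $\Ver_{p^{k+1}}$ is the category of $\Lambda X_{k-1}$-modules in $\Ver_{p^k}$, so one applies Shapiro's lemma together with the Koszul resolution at level $k$ to drop to level $k-1$, acquiring the parameter $y_k$ at that stage, and repeats until reaching $\Ver_p=\mathrm{Vec}$. The resulting complex $E(X)$ is a nested expression of the form $(S^\bullet X_0\otimes\cdots(S^\bullet X_{n-1}\otimes X)^0\cdots)^0[y_1,\dots,y_n]$ with differential $\bk[y_1,\dots,y_n]$-linear. Since each $S^\bullet X_{k-1}$ vanishes beyond degree $p^k-2$, the coefficient part is finite-dimensional, so $E(X)$ is a bounded complex of finite-rank free $\bk[y_1,\dots,y_n]$-modules and module-finiteness of its cohomology is immediate—no spectral-sequence convergence argument or filtered Nakayama is needed. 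If you replace your single filtration by this $n$-fold recursion through the tower of Verlinde categories, your outline becomes the paper's proof.
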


This confirms Conjecture 2.18 of~\cite{Etingof/Ostrik:2004a} for the categories $\Ver_{p^{n+1}}$. 

Once the Ext algebra is better understood, this will be the starting
point for applying support theory to the categories $\Ver_{p^{n+1}}$,
along the lines of the theory for finite groups, developed by
Carlson and others~\cite{Carlson:1983a}.
For example, one might hope that $\Ext^\bullet_{\Ver_{p^{n+1}}}(\one,\one)$ stratifies
the stable category of $\Ver_{p^{n+1}}$ as a tensor triangulated
category, in the sense of 
Benson, Iyengar and Krause~\cite{Benson/Iyengar/Krause:2011a}.
This would give a classification of the tensor ideal thick
subcategories, as well as the tensor ideal localising subcategories of
the stable category of the ind-completion.
If Conjecture~\ref{mainconj} holds, then the inclusion of the
subalgebra of parameters $\bk[y_1,\dots,y_n] \hookrightarrow
\Ext^\bullet_{\Ver_{p^{n+1}}}(\one,\one)$ is an inseparable
isogeny. This implies that it induces a bijection on homogeneous prime
ideals, and so $\mathsf{Proj}\, \Ext^\bullet_{\Ver_{p^{n+1}}}(\one,\one)$ is a
weighted projective space.
\medskip

\noindent
{\bf Acknowledgements.} The work of P. E. was supported by the 
NSF grant DMS-1502244. The authors are grateful to V. Ostrik for
useful discussions,
and to Olivier Dudas for communicating Proposition \ref{dudas}. 

\section{Graded algebras}

For a prime $p$ let $\zpi$ denote the ring of integers with $p$ inverted. 
An element of $\zpi$ is a rational number $r=m/n$ 
where $m,n\in \bZ$ and $n$ is a power of $p$. We say that such an 
element $r$ is \emph{even} if $r/2$ is also in $\zpi$ and \emph{odd} otherwise.
So for $p=2$, every element is even. 
If $a\in \zpi$, we write $(-1)^a$ to denote $+1$ if $a$ is even
and $-1$ if $a$ is odd.

We consider 
$\zpi$-graded algebras $R$ over a field $\bk$ of characteristic $p$.
If $x$ is a homogeneous element of $R$, we write $|x|$ for the degree of $x$.
We say that such an algebra is \emph{graded commutative}
if it satisfies $yx = (-1)^{|x||y|}xy$.

If $R$ is a $\zpi$-graded $\bk$-algebra, we write $\Int(R)$ for
the $\bZ$-graded algebra derived from $R$ by means of the 
inclusion of $\bZ$ in $\zpi$. So for $m\in\bZ$, the
homogeneous part of degree $m$ is given by $\Int(R)_m=R_m$.

\begin{example}\label{eg:2*}
Let $\bk$ be a field of characteristic two, and let $\bk[X^{2^*}]$ 
be the algebra generated by the elements $X^{2^n}$ with $n\in\bZ$, 
with the obvious relations $(X^{2^n})^2=X^{2^{n+1}}$. 
This is a $\zti$-graded commutative $\bk$-algebra, 
with $|X^{2^n}|=2^n$. We have $\Int(\bk[X^{2^*}])=\bk[X]$.
\end{example}

\begin{example}
Let $\bk$ be a field of odd characteristic $p$, and let
$\bk[X^{p^*}]\otimes \Lambda(Y)$
be the algebra generated by elements $X^{p^n}$ with $n\in \bZ$ 
and $Y$
with the relations $(X^{p^n})^p=X^{p^{n+1}}$, $Y^2=0$, $XY=YX$.
This is a $\zpi$-graded commutative $\bk$-algebra,
with $|X^{p^n}|=2p^n$ and $|Y|=1$.
We have $\Int(\bk[X^{p^*}]\otimes\Lambda(Y))=\bk[X]\otimes\Lambda(Y)$.
\end{example}

\begin{definition}
We define the \emph{Reynolds operator} $\rho\colon R\to\Int(R)$
to be the map which is the identity on elements of $\Int(R)$ and
zero on homogeneous elements of $R$ whose degree is not 
an integer. 
\end{definition}

\begin{lemma}\label{le:rho}
The map $\rho$ is an $\Int(R)$-module homomorphism.
\end{lemma}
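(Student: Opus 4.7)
The plan is to reduce to the case of homogeneous arguments and then do a straightforward case split on whether the degree of $x$ lies in $\bZ$ or not.

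First, I would note that $\rho$ is $\bk$-linear by definition, since it is defined by specifying its values on homogeneous components and extending linearly, and both $R$ and $\Int(R)$ decompose as direct sums over $\zpi$-homogeneous components. So to verify $\rho(ax) = a\rho(x)$ for $a \in \Int(R)$ and $x \in R$, it suffices by bilinearity to take $a$ and $x$ homogeneous, with $|a| = m \in \bZ$ and $|x| = r \in \zpi$. The element $ax$ is then homogeneous of degree $m + r$, and the crucial observation is that $m + r \in \bZ$ if and only if $r \in \bZ$, because $m \in \bZ$.

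Next I would split into two cases. If $r \in \bZ$, then $x \in \Int(R)$, so $\rho(x) = x$ and hence $a\rho(x) = ax$; on the other hand $ax$ has integer degree $m+r$, so $ax \in \Int(R)$ and $\rho(ax) = ax$, giving the equality. If $r \notin \bZ$, then $\rho(x) = 0$ by definition, so $a\rho(x) = 0$; and $ax$ has non-integer degree $m+r$, so $\rho(ax) = 0$ as well.

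Since there is essentially nothing to check beyond this dichotomy, I do not expect any obstacle. The only mild subtlety worth flagging is the justification that multiplication by $a \in \Int(R)$ sends the $\zpi$-homogeneous piece $R_r$ into $R_{m+r}$, which is immediate from $R$ being a $\zpi$-graded algebra. No use of graded commutativity is required; the statement would in fact hold for any $\zpi$-graded algebra, commutative or not, with the analogous two-sided Reynolds operator being both a left and right $\Int(R)$-module map.
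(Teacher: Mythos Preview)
Your proof is correct and follows the same idea as the paper's proof, which simply observes that multiplication by elements of $\Int(R)$ preserves whether or not the degree of an element is an integer. You have written out in detail the case split that this one-line remark encapsulates.
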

\begin{proof}
Multiplication by elements of $\Int(R)$ 
%on homogeneous elements 
preserves whether or not the degree of an
element is an integer.
\end{proof}

\begin{proposition}\label{pr:CM}
If $R$ is a Cohen--Macaulay $\bk$-algebra then so is $\Int(R)$.
\end{proposition}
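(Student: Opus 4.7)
The plan is to exploit two features of the inclusion $A := \Int(R) \subset R$: by Lemma \ref{le:rho}, $A$ is a direct summand of $R$ as an $A$-module via $\rho$, and (as I will verify) $R$ is integral over $A$. Together these yield a Hochster--Eagon style descent of the Cohen--Macaulay property from $R$ to $A$.

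First I would establish integrality. For any homogeneous $r \in R$ of $\zpi$-degree $a/p^n$, the element $r^{p^n}$ has integer degree, hence lies in $A$, and $r$ satisfies the monic polynomial $T^{p^n} - r^{p^n}$ over $A$. So $R$ is integral over $A$, and $\dim R = \dim A =: d$ by going-up. In particular, if $y_1,\dots,y_d$ is any homogeneous system of parameters for $A$, then $R/(y_1,\dots,y_d)R$ is integral over the artinian ring $A/(y_1,\dots,y_d)A$, hence itself zero-dimensional; so $y_1,\dots,y_d$ is a system of parameters for $R$ as well. Since $R$ is Cohen--Macaulay, this sequence is $R$-regular.

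Next I would descend regularity to $A$ using the Reynolds operator. Inductively, assume $y_1,\dots,y_{i-1}$ is $A$-regular, and suppose $a \in A$ satisfies $y_i a \in (y_1,\dots,y_{i-1})A$. Then $a \in (y_1,\dots,y_{i-1})R$ by $R$-regularity, so write $a = \sum_{j<i} y_j r_j$ with $r_j \in R$. Applying $\rho$, which is $A$-linear by Lemma \ref{le:rho} and fixes $a$, gives $a = \sum_{j<i} y_j \rho(r_j) \in (y_1,\dots,y_{i-1})A$. The base case $i=1$ is immediate from $y_1$ being a non-zerodivisor on $R \supset A$. Hence $y_1,\dots,y_d$ is $A$-regular, and $A = \Int(R)$ is Cohen--Macaulay.

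I do not anticipate any serious obstacle. The one ingredient specific to this $\zpi$-graded setting is the integrality of $R$ over $A$, which uses characteristic $p$ in the form $r^{p^n} \in \Int(R)$; everything else is the standard Reynolds-operator argument for transferring a property to a direct $A$-module summand of the same Krull dimension. One should verify along the way that $A$ inherits Noetherianness from $R$ so that phrases like \emph{system of parameters} are meaningful, but this is immediate from the splitting $\rho$ via the identity $IR \cap A = I$ for ideals $I \subset A$.
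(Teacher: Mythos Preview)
Your proof is correct and follows essentially the same route as the paper: both arguments use that $R$ is integral over $\Int(R)$ (since some power of every homogeneous element has integer degree) together with the $\Int(R)$-linear Reynolds retraction $\rho$ from Lemma~\ref{le:rho}. The only difference is cosmetic: the paper invokes Proposition~12 of Hochster--Eagon directly, whereas you unpack that argument by hand (descending a regular system of parameters via $\rho$). One small remark: your aside that integrality ``uses characteristic $p$'' is not quite right---the observation $r^{p^n}\in\Int(R)$ for homogeneous $r$ is pure degree arithmetic and holds in any characteristic; characteristic $p$ plays no role here.
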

\begin{proof}
For every element of $R$, some power is an element of $\Int(R)$.
So $R$ is an integral extension of $\Int(R)$.
By Lemma~\ref{le:rho}, the Reynolds operator 
$\rho\colon R \to \Int(R)$ is an $\Int(R)$-module homomorphism.
The proposition now follows from 
Proposition~12 of Hochster and Eagon~\cite{Hochster/Eagon:1971a}.
\end{proof}

\section{\texorpdfstring
{The algebra $\cE_n(p)$}
{The algebra ℰ𝑛(𝑝)}}

We treat separately the cases $p=2$ and $p$ odd.

\subsection{\texorpdfstring
{The algebra $\cE_n(2)$}
{The algebra ℰ𝑛(2)}}

In this section, we examine the case $p=2$,
and we let $\bk$ be a field of characteristic two. 

\begin{definition}\label{def:Rn2}
Let $R=R(n,2)$ be the $\zti$-graded
commutative polynomial algebra
$\bk[x_1,\dots,x_n]$ with $|x_i|=\frac{2^i-1}{2^i}$,
and let $\cE_n(2)=\Int(R)$.
\end{definition}

\begin{example} If $n=1$, we have $R=\bk[x_1]$ with $|x_1|=\frac{1}{2}$. The algebra
$\Int(R)$ is generated by $u=x_1^2$, so $\cE_1(2)=\bk[u]$. 
\end{example} 

\begin{example}
If $n=2$, we have $R=\bk[x_1,x_2]$ with $|x_1|=\frac{1}{2}$,
$|x_2|=\frac{3}{4}$. The algebra
$\Int(R)$ is generated by $u=x_1^2$, $v=x_1x_2^2$, $w=x_2^4$.
Then
\[ \cE_2(2)=\Int(R) = {\bf k}[u,v,w]/(uw+v^2) \]
with $|u|=1$, $|v|=2$, $|w|=3$.
\end{example}

\begin{example}
If $n=3$, we have $R={\bf k}[x_1,x_2,x_3]$ with $|x_1|=\frac{1}{2}$,
$|x_2|=\frac{3}{4}$, $|x_3|=\frac{7}{8}$. Then $\cE_3(2)=\Int(R)$ has
a homogeneous system of parameters $y_1=x_1^2$, $y_2=x_2^4$,
$y_3=x_3^8$, of degrees $1,3,7$. The quotient by these parameters
has the following basis. 
\[ \renewcommand{\arraystretch}{1.2}
\begin{array}{|c|ccccccccc|} \hline
\deg & 0 & 1 & 2 & 3 & 4&5&6& 7 & 8 \\ \hline
\text{elt}&1 &&x_1x_2^2& x_1x_2x_3^2 & x_1x_3^4
&x_2^2x_3^4&x_2x_3^6&& x_1x_2^3x_3^6 \\ 
&&&&&x_2^3x_3^2&&&&\\ \hline
\end{array} \]
The Poincar\'e series of $\cE_3(2)$ is therefore given by
\[ \sum_{d\ge 0}q^d\dim\cE_3(2)_d=\frac{1+q^2+q^3+2q^4+q^5+q^6+q^8}{(1-q)(1-q^3)(1-q^7)}. \]
\end{example}

\begin{theorem}\label{th:Gorenstein}
The algebra $\cE_n(2)$ is a Gorenstein integral domain. It has
a regular homogeneous sequence of parameters $y_1=x_1^2,\ y_2=x_2^4,\ y_3=x_3^8,\dots,\ y_n=x_n^{2^n}$
of degrees $1,3,7,\dots,2^n-1$.
Modulo this regular sequence, we get a graded Frobenius algebra
of dimension $2^{\frac{n(n-1)}{2}}$ with dualising element 
$ \alpha=x_1x_2^3x_3^7\dots x_{n-1}^{2^{n-1}-1}x_n^{2^n-2} $
in degree $2^{n+1}-2n-2$. The Poincar\'e series $f(q)=\sum_{d\ge 0}q^d\dim\cE_n(2)_d$ 
is a rational function of $q$ satisfying $f(1/q)=(-q)^nf(q)$.
\end{theorem}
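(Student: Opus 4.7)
The plan is to exploit the embedding $\cE_n(2)=\Int(R)\hookrightarrow R$ with $R=\bk[x_1,\ldots,x_n]$. Since $R$ is a polynomial ring it is an integral domain, so its subring $\cE_n(2)$ is as well, and $R$ is Cohen--Macaulay so Proposition~\ref{pr:CM} gives that $\cE_n(2)$ is Cohen--Macaulay too. The elements $y_i=x_i^{2^i}$ have integer degrees $2^i-1$, hence lie in $\cE_n(2)$, and $R$ is integral over $\bk[y_1,\ldots,y_n]\subseteq\cE_n(2)$, forcing the Krull dimension of $\cE_n(2)$ to equal $n$. So to show $(y_1,\ldots,y_n)$ is a regular sequence, it suffices to check that $\cE_n(2)/(y)$ is finite-dimensional. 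The Reynolds operator argument handles this: given $z\in (y)R\cap\cE_n(2)$ with $z=\sum y_i r_i$, Lemma~\ref{le:rho} yields $z=\rho(z)=\sum y_i\rho(r_i)\in (y)\cE_n(2)$, so $\cE_n(2)/(y)$ embeds in $R/(y)=\bk[x_1,\ldots,x_n]/(x_1^2,\ldots,x_n^{2^n})$ as the integer-degree subspace. Counting basis monomials $x_1^{a_1}\cdots x_n^{a_n}$ with $0\le a_i<2^i$ subject to the integrality condition $\sum_i a_i 2^{n-i}\equiv 0\pmod{2^n}$: for each of the $\prod_{i=1}^{n-1}2^i=2^{n(n-1)/2}$ choices of $(a_1,\ldots,a_{n-1})$ there is a unique admissible $a_n$.

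The heart of the argument is to show that $A=\cE_n(2)/(y)$ is Frobenius with socle spanned by $\alpha$. A direct calculation shows $\alpha$ has integer degree $2^{n+1}-2n-2$, so $\alpha\in A$. Define $\phi\colon A\to\bk$ to be the coefficient of $\alpha$ in the monomial basis. For each basis monomial $\mu=x_1^{a_1}\cdots x_n^{a_n}$ of $A$, define its would-be dual
\[ \mu^* \ = \ x_1^{1-a_1}x_2^{3-a_2}\cdots x_{n-1}^{2^{n-1}-1-a_{n-1}}x_n^{2^n-2-a_n}. \]
All exponents are non-negative: the delicate constraint is $a_n\le 2^n-2$, which holds because $a_n=2^n-1$ would force $\sum_{i<n}a_i 2^{n-i}\equiv 1\pmod{2^n}$, impossible by parity. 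One then verifies that $\mu^*$ also meets the integrality condition and hence lies in $A$, that $\mu\cdot\mu^*=\alpha$ in $A$, and that $\mu\cdot\mu''$ equals $\alpha$ in $A$ for no other basis monomial $\mu''$ (it is either zero or a distinct basis monomial). So the Gram matrix of $(f,g)\mapsto\phi(fg)$ in the monomial basis is a permutation matrix, the pairing is non-degenerate, and $A$ is a graded Frobenius algebra with dualising element $\alpha$.

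Since a graded Cohen--Macaulay algebra is Gorenstein iff its quotient by a regular system of parameters is Frobenius, we deduce that $\cE_n(2)$ is Gorenstein. For the Poincar\'e series, write $f(q)=h(q)/\prod_{i=1}^n(1-q^{2^i-1})$, where $h(q)$ is the Hilbert polynomial of $A$. The Frobenius property with socle in degree $s=2^{n+1}-2n-2$ makes $h$ palindromic, $h(q)=q^s h(1/q)$; the identity $\sum_{i=1}^n(2^i-1)-s=n$ then yields $f(1/q)=(-q)^n f(q)$ after a short rearrangement of the denominator. The main obstacle is the Frobenius verification in the middle step---in particular the combinatorial fact that $\mu\mapsto\mu^*$ is a well-defined involution on the basis of $A$ (needing the parity argument above) and that it induces a non-degenerate pairing. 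Everything else reduces to standard graded commutative algebra.
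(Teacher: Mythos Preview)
Your proof is correct and follows essentially the same route as the paper: Cohen--Macaulay via Proposition~\ref{pr:CM}, the $y_i$ as a homogeneous system of parameters, the Frobenius structure on the quotient via the complementary-monomial pairing $\mu\mapsto\mu^*$, and Gorenstein from the Frobenius quotient. The only cosmetic difference is that the paper observes directly that $a_n$ must be \emph{even} (hence $a_n\le 2^n-2$ once $a_n<2^n$), whereas you rule out $a_n=2^n-1$ by the same parity consideration; your added details (the Reynolds-operator argument for $\cE_n(2)/(y)\hookrightarrow R/(y)$, the explicit count, and the Gram-matrix formulation) are welcome elaborations of steps the paper leaves implicit.
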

\begin{proof}
It follows from Proposition~\ref{pr:CM} that $\cE_n(2)=\Int({\bf k}[x_1,\dots,x_n])$
is a Cohen--Macaulay integral domain.
So the homogeneous sequence of parameters 
$y_1,y_2,y_3,\dots$, $y_n$ is a regular sequence.

If $x_1^{a_1}\dots x_n^{a_n}$ is a monomial in $\cE_n(2)$ then
$a_n$ is even. If such a monomial is not divisible
by any of the parameters then $a_i\le 2^i-1$ for $1\le i< n$,
and $a_n\le 2^n-2$. The monomial
$ x_1^{1-a_1}x_2^{3-a_2}x_3^{7-a_3}\dots x_{n-1}^{2^{n-1}-1-a_{n-1}} x_n^{2^n-2-a_n}$
is also a basis element of $\cE_n(2)$ and the product of this
with $x_1^{a_1}\dots x_n^{a_n}$ is equal to $\alpha$.
So $\cE_n(2)/(x_1^2,x_2^4,x_3^8,\dots,x_n^{2^n})$ is a 
Frobenius algebra with a basis consisting of these monomials,
and with dualising element $\alpha$.

It is easy to verify using the Frobenius property that $f(1/q)=(-q)^nf(q)$. It then
follows by Theorem~4.4 of Stanley~\cite{Stanley:1978a}
that $\cE_n(2)$ is a Gorenstein algebra. Alternatively, it is shown in
Eisenbud~\cite[\S21.3]{Eisenbud:1995a} that the Gorenstein property
holds for a graded Cohen--Macaulay ring if and only if the
quotient by a regular sequence of parameters is a Frobenius algebra.
\end{proof}

There is a natural inclusion map of algebras $R(n-1,2) \to R(n,2)$
given by sending each $x_i$ in $R(n-1,2)$ to the element with the same
name in $R(n,2)$.
It is easy to check that in each degree the sequence
\[ R(1,2) \to \cdots \to R(n-1,2)\to R(n,2) \to \cdots \]
stabilises at some finite stage. So we take the colimit
$ R(\infty,2) = \displaystyle\lim_{\substack{\longrightarrow \\ n}}R(n,2). $
Applying $\Int$, we obtain inclusion maps
$\cE_{1}(2) \to \cdots \to \cE_{n-1}(2) \to \cE_n(2) \to \cdots$
whose colimit we denote $\cE_\infty(2)=\Int(R(\infty,2))$.

\subsection{\texorpdfstring
{The algebra $\cE_n(p)$, $p>2$}
{The algebra ℰ𝑛(𝑝), 𝑝 > 2}}

For odd primes, we should double the degrees of the polynomial
generators and introduce new exterior generators of degree one
smaller.

Let $p$ be an odd prime and let ${\bf k}$ be a field of characteristic $p$.
Let $R=R(n,p)$ be the $\zpi$-graded commutative algebra
$ {\bf k}[x_1,\dots,x_n] \otimes \Lambda(\xi_1,\dots,\xi_n) $
with $|x_i|=\frac{2(p^i-1)}{p^i}$ and $|\xi_i|=|x_i|-1=\frac{p^i-2}{p^i}$.
Note that $|x_i|$ is even and $|\xi_i|$ is odd.
We define $\cE_n(p)=\Int(R(n,p))$. 

\begin{example} 
If $n=1$, we have $R={\bf k}[x_1]\otimes \Lambda(\xi_1)$ with 
$|x_1|=\frac{2(p-1)}{p}$ and $|\xi_1|=\frac{p-2}{p}$. In this case, the algebra $\cE_1(p)=\Int(R)$ is generated by the elements $y=x_1^p$ and $\eta=x_1^{p-1}\xi_1$
with $|y|=2p-2$, $|\eta|=2p-3$, namely, $\cE_1(p)=\bk[y]\otimes \Lambda(\eta)$. 
\end{example} 

\begin{example}
If $p=3$ and $n=2$, we have $R={\bf k}[x_1,x_2]\otimes \Lambda(\xi_1,\xi_2)$ with 
$|x_1|=\frac{4}{3}$, $|x_2|=\frac{16}{9}$, $|\xi_1|=\frac{1}{3}$,
$|\xi_2|=\frac{7}{9}$. In this case, the algebra $\cE_2(3)=\Int(R)$ is generated by
the following elements: 
\[ \begin{array}[t]{cc}
\textrm{element} & \textrm{degree} \\
x_1^2\xi_1 & 3 \\
x_1^3 & 4 \\
x_1x_2^2\xi_1\xi_2 & 6 \\
x_1x_2^3\xi_1 & 7 
\end{array} \qquad\qquad  
\begin{array}[t]{cc}
\textrm{element} & \textrm{degree} \\  
x_1^2x_2^2\xi_2 & 7 \\
x_1^2x_2^3 & 8 \\
x_2^5\xi_1\xi_2  & 10 \\
x_1x_2^5\xi_2 & 11 
\end{array} \qquad\qquad  
\begin{array}[t]{cc}
\textrm{element} & \textrm{degree} \\  
x_2^6\xi_1 & 11 \\
x_1x_2^6 & 12 \\
x_2^8\xi_2 & 15 \\
x_2^9 & 16 
\end{array} \]
\iffalse
\[ \begin{array}[t]{cc}
\textrm{element} & \textrm{degree} \\
x_1^2\xi_1 & 3 \\
x_1^3 & 4 \\
x_1x_2^2\xi_1\xi_2 & 6 \\
x_1x_2^3\xi_1 & 7 \\
x_1^2x_2^2\xi_2 & 7 \\
x_1^2x_2^3 & 8 
\end{array} \qquad\qquad
\begin{array}[t]{cc}
\textrm{element} & \textrm{degree} \\
x_2^5\xi_1\xi_2  & 10 \\
x_1x_2^5\xi_2 & 11 \\
x_2^6\xi_1 & 11 \\
x_1x_2^6 & 12 \\
x_2^8\xi_2 & 15 \\
x_2^9 & 16 
\end{array} \]
\fi
A regular homogeneous system of parameters is given by $y_1=x_1^3$ and $y_2=x_2^9$,
and the quotient by these parameters is a graded Frobenius algebra
with dualising element $x_1^2x_2^8\xi_1\xi_2$ in degree $18$. We have
\[ \sum_{d\ge 0} q^d\dim\cE_2(3)_d = 
\frac{1+q^3+q^6+2q^7+ q^8+q^{10}+2q^{11}+q^{12}+q^{15}+q^{18} }
{(1-q^4)(1-q^{16})}.\]
\end{example}

\begin{theorem}
The ring $\cE_n(p)$ is Gorenstein. It has a homogeneous 
system of parameters $y_1=x_1^p,y_2=x_2^{p^2},\dots,y_n=x_n^{p^n}$ of
degrees $2(p-1),\ 2(p^2-1),\ \dots,2(p^n-1)$. Modulo this regular
sequence, we get a graded Frobenius algebra of dimension $2^np^{\frac{n(n-1)}{2}}$ with dualising
element 
$\alpha=x_1^{p-1}x_2^{p^2-1}\ldots x_n^{p^n-1}\xi_1\xi_2\ldots \xi_n$
in degree $2\bigl(\frac{p^{n+1}-1}{p-1}\bigr)-3n-2$. The Poincar\'e
series $f(q)=\sum_{d\ge 0}q^d\dim\cE_n(p)_d$ is a rational function
of $q$ satisfying $f(1/q)=(-q)^nf(q)$.
\end{theorem}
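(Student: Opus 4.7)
The plan is to follow the proof of Theorem~\ref{th:Gorenstein}, with the exterior generators $\xi_i$ requiring combinatorial adjustments. First, the ambient ring $R=R(n,p)=\bk[x_1,\ldots,x_n]\otimes\Lambda(\xi_1,\ldots,\xi_n)$ is a graded complete intersection over $\bk$, hence Cohen--Macaulay, and every generator has some power in $\Int(R)$ (use $x_i^{p^n}$ and $\xi_i^2=0$), so Proposition~\ref{pr:CM} gives that $\cE_n(p)=\Int(R)$ is Cohen--Macaulay of Krull dimension $n$. Because $R/(y_1,\ldots,y_n)R\cong\bigotimes_{i=1}^n\bigl(\bk[x_i]/(x_i^{p^i})\otimes\Lambda(\xi_i)\bigr)$ is finite-dimensional and $R$ is module-finite over $\cE_n(p)$, the $y_i$ form a homogeneous system of parameters, hence a regular sequence by Cohen--Macaulayness.

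I next analyse the quotient $B:=\cE_n(p)/(y_1,\ldots,y_n)\cE_n(p)$. By Lemma~\ref{le:rho}, the Reynolds operator splits $R=\Int(R)\oplus\ker(\rho)$ as $\Int(R)$-modules, and reducing modulo $(y_1,\ldots,y_n)$ embeds $B$ as the integer-degree subspace of the finite-dimensional Frobenius algebra $A=R/(y_1,\ldots,y_n)R$. A basis for $A$ consists of monomials $m=x_1^{a_1}\cdots x_n^{a_n}\xi_1^{\epsilon_1}\cdots\xi_n^{\epsilon_n}$ with $0\le a_i\le p^i-1$ and $\epsilon_i\in\{0,1\}$. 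The complementary involution $m\mapsto m^*:=x_1^{p-1-a_1}\cdots x_n^{p^n-1-a_n}\xi_1^{1-\epsilon_1}\cdots\xi_n^{1-\epsilon_n}$ satisfies $m\cdot m^*=\pm\alpha$ in $A$, and $|m|+|m^*|=|\alpha|$; a direct computation gives $|x_i^{p^i-1}\xi_i|=2p^i-3$ and hence $|\alpha|=\sum_{i=1}^n(2p^i-3)=\frac{2(p^{n+1}-1)}{p-1}-3n-2\in\bZ$, so the involution preserves the integer-degree basis. This exhibits $B$ as a graded Frobenius algebra with dualising element $\alpha$.

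For the dimension, note that $|x_i|\equiv|\xi_i|\equiv -2/p^i\pmod\bZ$, so since $2$ is invertible modulo $p^n$, the integrality condition $|m|\in\bZ$ becomes $\sum_{i=1}^n(a_i+\epsilon_i)p^{n-i}\equiv 0\pmod{p^n}$. For each fixed $(\epsilon_1,\ldots,\epsilon_n)\in\{0,1\}^n$, this is a coset condition on $(a_1,\ldots,a_n)\in\prod_i\{0,1,\ldots,p^i-1\}$ for the homomorphism $\prod_i\bZ/p^i\bZ\to\bZ/p^n\bZ$, $(a_i)\mapsto\sum a_ip^{n-i}\bmod p^n$, which is surjective since already the $i=n$ coordinate alone ranges over $\bZ/p^n\bZ$. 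Each fibre therefore has size $p^{n(n+1)/2}/p^n=p^{n(n-1)/2}$, and summing over the $2^n$ choices of $\epsilon$ gives $\dim B=2^np^{n(n-1)/2}$.

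Finally, the Gorenstein property follows from Cohen--Macaulayness together with the Frobenius quotient by Eisenbud~\cite[\S21.3]{Eisenbud:1995a}, as in the $p=2$ case. The functional equation $f(1/q)=(-q)^nf(q)$ follows from writing $f(q)=g(q)/\prod_{i=1}^n(1-q^{|y_i|})$, where $g$ is the palindromic Poincar\'e series of the Frobenius quotient $B$, and verifying the one-line identity $\sum_{i=1}^n|y_i|-|\alpha|=n$. The main technical obstacle is the uniform-distribution argument in the dimension count; everything else is a structural adaptation of the $p=2$ proof.
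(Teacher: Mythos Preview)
Your proof is correct and follows essentially the same route as the paper's: Cohen--Macaulayness via Proposition~\ref{pr:CM}, the complementary-monomial argument to get the Frobenius quotient with dualising element $\alpha$, and the appeal to Eisenbud~\S21.3 (rather than Stanley's theorem) for the Gorenstein conclusion. You supply two details the paper leaves implicit---the uniform-fibre count giving $\dim B=2^np^{n(n-1)/2}$ and the palindrome/degree identity $\sum_i|y_i|-|\alpha|=n$ yielding the functional equation---both of which are fine additions.
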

\begin{proof}
It follows from Proposition~\ref{pr:CM} that $\cE_n(p)$ is
Cohen--Macaulay. Since $y_1,y_2,\dots,$ $y_n$ are
elements of $\cE_n(p)$ which form a regular sequence of parameters in $R(n,p)$,
they also form a regular sequence of parameters in $\cE_n(p)$.  If 
$x_1^{a_1}\dots x_n^{a_n}\xi_1^{\ep_1}\dots \xi_n^{\ep_n}$ 
($\ep_i\in\{0,1\}$ for $1\le i\le n$)
is a monomial in $\cE_n(p)$ which is not divisible by any of the
parameters then $a_i \le p^i-1$ for $1\le i \le n$.  The monomial
\[ x_1^{p-1-a_1}x_2^{p^2-1-a_2}\dots x_n^{p^n-1-a_n}\xi_1^{1-\ep_1}\dots
  \xi_n^{1-\ep_n} \]
is also a basis element of $\cE_n(p)$ and its product with
$x_1^{a_1}\dots x_n^{a_n}\xi_1^{\ep_1}\dots \xi_n^{\ep_n}$ 
is equal to $\alpha$. 

Again it is easy to verify using the Frobenius property that $f(1/q)=(-q)^nf(q)$.
But this time, we cannot show the Gorenstein property as in the proof of
Theorem~\ref{th:Gorenstein}, using Theorem~4.4
of~\cite{Stanley:1978a}, because $\cE_n(p)$ is not an integral domain.
However, the alternative argument using \S21.3
of~\cite{Eisenbud:1995a} still shows that $\cE_n(p)$ is Gorenstein.
\end{proof}

\begin{remark} 
Recall that there is an action of the multiplicative group 
$\bG_m$ on the algebras ${\bf k}[x_1,...,x_n]$ and 
${\bf k}[x_1,...,x_n]\otimes \Lambda (\xi_1,...,\xi_n)$ defined by 
their $\bZ$-grading (the fractional degrees multiplied by $p^n$). 
Also we have the semisimple infinitesimal subgroup scheme 
$\mu_{p^n}\subset \bG_m$ defined by the equation $a^{p^n}=1$ 
(i.e., $\mu_{p^n}=(\bG_m)_{(n)}$, the $n$-th Frobenius kernel 
of $\bG_m$). For $p>2$ we have $\cE_n(p)=({\bf k}[x_1,...,x_n]
\otimes \Lambda (\xi_1,...,\xi_n))^{\mu_{p^n}}$, the subring of
invariants, and for $p=2$ we similarly have 
$\cE_n(2)={\bf k}[x_1,...,x_{n-1},x_n]^{\mu_{2^n}}$ $={\bf k}[x_1,...,x_{n-1},x_n^2]^{\mu_{2^n}}$. 
Since $\sum_i \deg(x_i)-\sum_i \deg(\xi_i)$ is an integer, the action of $\mu_{p^n}$ 
on the super-space spanned by the variables $x_i,\xi_i$ for $p>2$ has Berezinian equal to 1 (recall that degrees of odd variables should be counted with a minus sign). Similarly, for $p=2$ 
the action of $\mu_{2^n}$ on the variables $x_1,...,x_{n-1},x_n^2$ has determinant equal to $1$, as 
$\sum_{i=1}^{n-1}\deg x_i+2\deg x_n$ is an integer. This is related to the fact 
that the ring $\cE_n(p)$ is Gorenstein. For example, for $p=2$ 
this follows from a group scheme generalization of Watanabe's theorem: 
the algebra of invariants for a homogeneous unimodular action of a
finite semisimple group scheme on a polynomial algebra is Gorenstein. 
This is a special case of \cite{Kirkman/Kuzmanovich/Zhang:2009a}, Theorem~0.1. 
\end{remark}

\section{\texorpdfstring
{Generating functions}
{Generating functions}}

\subsection{\texorpdfstring{Generating functions, $p=2$}
{Generating functions, 𝑝 = 2}}

For an integer $d\ge 0$, the degree $d$ part of $\cE_n(2)$ has a basis consisting of the monomials
$x_1^{a_1}x_2^{a_2}\dots x_n^{a_n}$ such that the $a_j$ are non-negative integers, and
\begin{equation*}
\textstyle d=\frac{1}{2}a_1+\frac{3}{4}a_2+\dots + \frac{2^n-1}{2^n}a_n. 
\end{equation*}
The smallest integer degree of a term with $a_j>0$ is $j$, for the monomial $x_1x_2\dots x_{j-1}x_j^2$.
So for $d$ an integer, we must have $a_j=0$ for $j>d$. It follows
that the maps of vector spaces $\cE_1(2)_d\to\cE_2(2)_d\to\cdots$ are
eventually isomorphisms, and 
 $\cE_\infty(2)_d$ is a finite dimensional vector space. It is spanned by the
monomials $x_1^{a_1}x_2^{a_2}\cdots$ with
\begin{equation*}
\textstyle d=\frac{1}{2}a_1+\frac{3}{4}a_2+\frac{7}{8}a_3+\cdots .
\end{equation*}
Such an expression is a \emph{partition} of $d$ into parts $\frac{1}{2},\frac{3}{4},\frac{7}{8},\cdots$.
These are enumerated in sequence A002843 of the On-line Encyclopedia 
of Integer Sequences (which is sequence 405 of  Sloane's Handbook~\cite{Sloane:1973a}).
This sequence has been studied by Minc~\cite{Minc:1959a},
Andrews~\cite{Andrews:1981a}, and 
Flajolet and Prodinger~\cite{Flajolet/Prodinger:1987a}; see also
Nguyen, Schwartz and Tran~\cite{Nguyen/Schwartz/Tran:2009a} for a
context in algebraic topology. The first few terms are 
\begin{gather*}
1, 1, 2, 4, 7, 13, 24, 43, 78, 141, 253, 456, 820, 1472, 2645, 
4749, 8523, 15299, 27456, 49267, 
\\ 
88407, 158630, 284622, 510683, 916271, 1643963, 
2949570, 5292027, 9494758, %17035112, 
%\\
%30563634, 54835835, 98383803, 176515310, 
%316694823, 568197628, 1019430782, 
\dots\qquad
\end{gather*}
A few more terms can be found at \url{https://oeis.org/A002843/b002843.txt} .
This sequence grows like $C\lambda^n$, where 
\begin{equation}\label{longnum}
C:=0.74040259366730734...,\quad \lambda:=1.79414718754168546...
\end{equation} 

Our analysis of the generating function $\sum_{d=0}^\infty
q^d\dim\cE_n(2)_d$ follows Andrews~\cite{Andrews:1981a}.
Since there are many misprints in the relevant section of~\cite{Andrews:1981a}, 
and we are doing something slightly different,
we choose to repeat the argument in our context.
The analogous argument for $p$ odd, which we carry out later, 
is not dealt with in~\cite{Andrews:1981a}.

Let $N(m,d)$ be the number of monomials of degree $d$
in $x_1,\dots,x_m$ with $a_m>0$.  Thus the dimension
of $\cE_n(2)_d$ is $\sum_{m=1}^nN(m,d)$, and the dimension of $\cE_\infty(2)_d$ is
$\sum_{m=1}^\infty N(m,d)$. 

We can rewrite these monomials in terms of new variables $z_1,z_2,\dots$ as follows.
Set $z_1=x_1^2$, and $z_i=x_{i-1}^{-1}x_i^2$ for $i\ge 2$. These variables $z_i$
are degree one elements of the larger $\zti$-graded 
ring of Laurent polynomials ${\bf k}[x_1,x_1^{-1},x_2,x_2^{-1},\dots]$. 
Then we have
$ x_1^{a_1}x_2^{a_2}\dots = z_1^{b_1}z_2^{b_2}\dots$
where $a_i=2b_i-b_{i+1}$. The constraints $a_i\ge 0$ translate 
to $2b_i\ge b_{i+1}$ for $i\ge 1$, and since the $b_i$ are eventually zero,
they are all non-negative. Thus $N(m,d)$ is the number of sequences
$(b_1,\dots,b_m)$ of nonnegative integers with $\sum_{i=1}^m b_i=d$, 
and $2b_i\ge b_{i+1}$ for $1\le i< m$. 

Set $\mu_m(q)=\sum_{d=0}^\infty N(m,d)q^d$, and $\mu_0(q)=1$.
We would like to compute $\mu_m(q)$. 

In fact, we will compute a more general generating function, taking into account the degrees with respect to all $z_i$. Introduce auxiliary 
variables $q_1,q_2,...$ corresponding to the statistics 
$b_1,b_2,...$; i.e., we define the multivariate Poincar\'e series of $\cE_n(2)$
$$
\mu_m(q_1,...,q_m):=\sum_{b_1,...,b_m: 2b_i\ge b_{i+1}}^\infty q_1^{b_1}...q_m^{b_m},
$$
so that the usual Poincar\'e series of this algebra is $\mu_m(q)=\mu_m(q,...,q)$. 

Thus we have 
\[ \mu_m(q_1,...,q_m)=\sum_{b_1=1}^\infty \sum_{b_2=1}^{2b_1}\cdots
\sum_{b_m=1}^{2b_{m-1}} q_1^{b_1}...q_m^{b_m}. \]
For the last sum we have
$\displaystyle
\sum_{b_m=1}^{2b_{m-1}}q_m^{b_m} 
= \frac{q_m}{1-q_m}(1-q_m^{2b_{m-1}})
$
and so we obtain
\[ \mu_m= \frac{q_m}{1-q_m}\left(\mu_{m-1}-\sum_{b_1=1}^\infty
\sum_{b_2=1}^{2b_1}\!\!\cdots\!\sum_{b_{m-1}=1}^{2b_{m-2}}q_1^{b_1}\dots
q_{m-2}^{b_{m-2}}(q_{m-1}q_m^2)^{b_{m-1}} \right). \]
Now for the last sum we have
$\displaystyle \sum_{b_{m-1}=1}^{2b_{m-2}}(q_{m-1}q_m^2)^{b_{m-1}}
= \frac{q_{m-1}q_m^2}{1-q_{m-1}q_m^2}(1-(q_{m-1}q_m^2)^{2b_{m-2}} )$
and so we obtain
{\small
\[ 
\mu_m=\frac{q_m}{1-q_m}\Bigl(\mu_{m-1}\!-\!\frac{q_{m-1}q_m^2}{1-q_{m-1}q_m^2}\Bigl(
\mu_{m-2}\!-\!\!\sum_{b_1=1}^\infty\sum_{b_2=1}^{2b_1}
\cdots\!\!\!\!\sum_{b_{m-2}=1}^{2b_{m-3}}\!
q_1^{b_1}\!\dots q_{m-3}^{b_{m-3}}(q_{m-2}q_{m-1}^2q_m^4)^{b_{m-2}}\!\Bigr) \Bigr). 
\]
}%
We continue this way, using induction. At the end, we use $\mu_0=1$.
We obtain
{\small
\[ \sum_{i=1}^m (-1)^i \mu_{m-i}
\Bigl(\frac{q_m}{1-q_m}\Bigr)\Bigl(\frac{q_{m-1}q_m^2}{1-q_{m-1}q_m^2}\Bigr)
\Bigl(\frac{q_{m-2}q_{m-1}^2q_m^4}{1-q_{m-2}q_{m-1}^2q_m^4}\Bigr)
\dots \Bigl(\frac{q_{m-i}...q_{m}^{2^i}}{1-q_{m-i}...q_{m}^{2^i}}\Bigr)
\!=\!\begin{cases}0 & m>0 \\ 1 & m=0. \end{cases} \]
}%
So we set
\[ \displaystyle\ell_m(q_1,...,q_m)= \frac{q_1q_2^3q_3^7\dots q_m^{2^m-1}}
{(1-q_m)(1-q_{m-1}q_m^2)\dots(1-q_1q_2^2...q_m^{2^{m-1}})}, \]
and we have 
$\displaystyle\sum_{i=0}^m(-1)^i\mu_{m-i}\ell_i = \begin{cases} 0 &
  m>0 \\ 1 & m=0.\end{cases} $

Now we introduce another variable $t$, and we have
\[\displaystyle\sum_{m=0}^\infty\sum_{i=0}^m t^{m-i}\mu_{m-i}\cdot (-1)^it^i\ell_i = 1.\] 
Setting $j=m-i$ and 
$$
\mu(t,\bold q):=\sum_{m=0}^\infty\mu_m(q_1,...,q_m)t^m,\ 
\mu(t,q):=\mu(t,q,q,...)=\sum_{m=0}^\infty\mu_m(q)t^m,
$$
we rewrite this as
\begin{equation}\label{eq:Andrews}
\mu(t,\bold q)g(t,\bold q)=1,\ g(t,\bold q):=\sum_{i=0}^\infty(-1)^it^i\ell_i(q_1,...,q_i).
\end{equation}
This yields 
$\mu(t,\bold q)=\frac{1}{g(t,\bold q)}.$
In particular, $\mu(t,q)=\frac{1}{g(t,q)},$
where $g(t,q):=g(t,q,q,...)$.
Thus we obtain the following result. 

\begin{theorem}
We have
$$
\mu(t,\bold q)=\left(\sum_{m=0}^\infty\frac{(-1)^mt^mq_1q_2^3q_3^7\dots q_m^{2^m-1}}{(1-q_m)(1-q_{m-1}q_m^2)\dots(1-q_1q_2^2...q_m^{2^{m-1}})}\right)^{-1}. 
$$
In particular, 
\[ \sum_{m,d=0}^\infty N(m,d)t^mq^d\ =\ 
\left. 1\middle/\sum_{i=0}^\infty \frac{(-1)^it^iq^{1+3+7+\dots+(2^i-1)}}
{(1-q)(1-q^3)(1-q^7)\dots(1-q^{2^i-1})}\right. \]
\end{theorem}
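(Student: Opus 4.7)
The plan is to extract a recursion for $\mu_m(q_1,\ldots,q_m)$ by collapsing the innermost geometric sum, iterate it down to $\mu_0 = 1$, and then repackage the resulting identity as a formal power series in $t$ to obtain the stated inverse formula.

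First, applying $\sum_{b_m=1}^{2b_{m-1}} q_m^{b_m} = \frac{q_m(1-q_m^{2b_{m-1}})}{1-q_m}$ to the innermost sum gives
\[ \mu_m(q_1,\ldots,q_m)=\frac{q_m}{1-q_m}\Bigl(\mu_{m-1}(q_1,\ldots,q_{m-1})-\mu_{m-1}(q_1,\ldots,q_{m-2},q_{m-1}q_m^2)\Bigr). \]
The second summand has the same shape as the left-hand side but with its top variable twisted to $Q_1 := q_{m-1}q_m^2$. Applying the same trick to that term peels off another factor with $Q_2 := q_{m-2}Q_1^2 = q_{m-2}q_{m-1}^2 q_m^4$, and inductively $Q_j := q_{m-j}\,Q_{j-1}^2 = q_{m-j}q_{m-j+1}^2\cdots q_m^{2^j}$.

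Running the substitution all the way down to $\mu_0 = 1$ yields
\[ \mu_m(q_1,\ldots,q_m)=\sum_{i=1}^m(-1)^{i-1}\,\mu_{m-i}(q_1,\ldots,q_{m-i})\prod_{j=0}^{i-1}\frac{Q_j}{1-Q_j}. \]
A short telescoping computation gives $Q_0 Q_1\cdots Q_{i-1}=q_{m-i+1}q_{m-i+2}^3\cdots q_m^{2^i-1}$, and the denominators line up so that $\prod_{j=0}^{i-1}\frac{Q_j}{1-Q_j}$ is exactly $\ell_i$ applied to the last $i$ of the arguments $q_1,\ldots,q_m$. Repackaged as a formal power series in $t$, this becomes the convolution identity $\sum_{i=0}^m(-1)^i\mu_{m-i}\ell_i=[m=0]$, i.e.\ $\mu(t,\bold q)\,g(t,\bold q)=1$, whence $\mu(t,\bold q)=g(t,\bold q)^{-1}$.

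For the univariate assertion, specialize $q_1=\cdots=q_m=q$: under this substitution $\ell_i$ becomes independent of which block of variables it receives and equals $q^{1+3+\cdots+(2^i-1)}/\prod_{j=1}^i(1-q^{2^j-1})$. Since $\sum_{m,d\ge 0}N(m,d)t^mq^d=\mu(t,q)$, reading off coefficients in $\mu(t,q)=1/g(t,q)$ gives the stated formula. The main technical step requiring care is the bookkeeping of the iterated substitutions, verifying both that the numerators $Q_0Q_1\cdots Q_{i-1}$ telescope to the claimed product and that the successive denominators match the denominator of $\ell_i$; the subsequent power-series inversion is routine.
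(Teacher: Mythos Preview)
Your proof is correct and follows essentially the same route as the paper: peel off the innermost geometric sum to obtain the recursion, iterate it (the paper writes out the successive substitutions explicitly rather than introducing your $Q_j$ notation), arrive at the convolution identity $\sum_{i=0}^m(-1)^i\mu_{m-i}\ell_i=[m=0]$, and then package this as $\mu(t,\mathbf q)g(t,\mathbf q)=1$ before specializing $q_i=q$. Your observation that $\ell_i$ in the convolution is applied to the \emph{last} $i$ of the variables $q_1,\ldots,q_m$ is exactly right and matches what the paper does implicitly.
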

Note that $1+3+7+\dots+(2^i-1)=2^{i+1}-i-2$.

Expanding this out, the reciprocal of the generating function for $N(m,d)$ is
\[ 1 - \frac{tq}{1-q} + \frac{t^2q^4}{(1-q)(1-q^3)} - 
\frac{t^3q^{11}}{(1-q)(1-q^3)(1-q^7)} + \cdots \]
which tabulates as follows:
{\tiny
\[ \setlength{\arraycolsep}{1.5mm}
\begin{array}{l|rrrrrrrrrrrrrrrrrrrrr}
&1&q&q^2&q^3&q^4&q^5&q^6&q^7&q^8&q^9&q^{10}&q^{11}
&q^{12}&q^{13}&q^{14}&q^{15}&q^{16}&q^{17}&q^{18} \\ \hline
1&1\\
t&& -1 & -1 & -1&-1&-1&-1&-1&-1&-1&-1&-1&-1&-1&-1&-1&-1&-1&-1\\
t^2&&&&&1&1&1&2&2&2&3&3&3&4&4&4&5&5&5\\
t^3&&&&&&&&&&&&-1&-1&-1&-2&-2&-2&-3&-4
\end{array} \]
}
Taking the reciprocal, we obtain the table of coefficients $N(m,d)$:
{\tiny
\[ \setlength{\arraycolsep}{1.5mm}
\begin{array}{l|rrrrrrrrrrrrrrrrrrrrr}
&1&q&q^2&q^3&q^4&q^5&q^6&q^7&q^8&q^9&q^{10}&q^{11}
&q^{12}&q^{13}&q^{14}&q^{15}&q^{16}&q^{17}&q^{18} \\ \hline
1&1\\
t   &&1&1&1&1&1&1&1&1&1&1&1&1&1&1&1&1&1&1\\
t^2&&&1&2&2&3&4&4&5&6&6&7&8&8&9&10&10&11&12\\
t^3&&&&1&3&4&6&9&11&14&18&22&26&31&36&41&47&53&60\\
t^4&&&&&1&4&7&11&18&25&33&45&59&74&94&116&139&168&199\\
t^5&&&&&&1&5&11&19&33&51&72&102&141&187&246&319&403&504\\
t^6&&&&&&&1&6&16&31&57&96&146&216&313&436&595&802&1056\\
t^7&&&&&&&&1&7&22&48&94&170&278&432&654&954&1353&1888\\
t^8&&&&&&&&&1&8&29&71&149&287&502&822&1299&1979&2918\\
t^9&&&&&&&&&&1&9&37&101&228&466&867&1497&2470&3922\\
t^{10}&&&&&&&&&&&1&10&46&139&338&732&1442&2623&4520\\
t^{11}&&&&&&&&&&&&1&11&56&186&487&1117&2322&4442\\
t^{12}&&&&&&&&&&&&&1&12&67&243&684&1661&3635\\
t^{13}&&&&&&&&&&&&&&1&13&79&311&939&2413\\
t^{14}&&&&&&&&&&&&&&&1&14&92&391&1263\\
t^{15}&&&&&&&&&&&&&&&&1&15&106&484\\
t^{16}&&&&&&&&&&&&&&&&&1&16&121\\
t^{17}&&&&&&&&&&&&&&&&&&1&17\\
t^{18}&&&&&&&&&&&&&&&&&&&1
\end{array} \]
}

\noindent
The coefficients of the Poincar\'e series for $\cE_n(2)$ are 
given by adding the first $n$ rows of this table, while the 
coefficients of the Poincar\'e series for $\cE_\infty(2)$ are
given by adding all the rows; in other words by setting $t=1$.
Thus, setting $N(d):=\sum_{m\ge 0}N(m,d)$, we get 
\[ \sum_{d=0}^\infty N(d)q^d = \frac{1}{\phi(q)}, \quad 
\phi(q):=\sum_{i=0}^\infty \frac{(-1)^iq^{1+3+7+\dots+(2^i-1)}}
{(1-q)(1-q^3)(1-q^7)\dots(1-q^{2^i-1})}. \]
Note that the series $\phi(q)$ defines an analytic 
function in the disk $|q|<1$, and 
that the numbers $C,\lambda$ in \eqref{longnum} 
are determined as follows: $\lambda=\frac{1}{\alpha}$, 
where $\alpha$ is the smallest positive zero of $\phi(q)$, while 
$C=-\frac{1}{\alpha \phi'(\alpha)}$. 

It is easy to see from this computation that the reciprocal of the
generating function is much easier to compute than the generating
function itself, and has much smaller coefficients. The same will be
true for $p$ odd.

\begin{remark}
Recall (\cite{Benson/Etingof:2019a}) that the category
$\Ver_{2^{n+1}}^+$ is the category of modules in $\Ver_{2^{n}}$ 
over the algebra $A:=\Lambda V$, where $V=X_{n-1}$ is the 
generating object of $\Ver_{2^{n}}$. Thus the group $\bG_m$ 
acts on $A$ by scaling $V$. This action gives rise to an action of 
$\bG_m$ on $\Ext^\bullet_{\Ver_{2^{n+1}}}(\one,\one)$, i.e., a 
$\bZ$-grading on each cohomology group. We expect that on 
$\cE_n(2)$, this grading is given by the degree with respect to the 
variable $z_n$. In other words, we expect that the 2-variable
Poincar\'e series of $\cE_n(2)$ taking into account this grading is
$\mu_m(q,...,q,qv)$.  

So let us compute the generating function 
$\mu(t,q,v):=\sum_{m=0}^\infty \mu_m(q,...,q,qv)t^m$.
Arguing as above, we get 
$\mu(t,q,v)-\mu(t,q)+\mu(t,q)g(t,q,v)=1$,
where 
$$
g(t,q,v):=\sum_{i=0}^\infty \frac{(-1)^it^iq^{2^{i+1}-i-2}v^{2^i-1}}
{(1-qv)(1-q^3v^2)(1-q^7v^4)\dots(1-q^{2^i-1}v^{2^{i-1}})}.
$$
Thus, we have 
$$
\mu(t,q,v)=1+\frac{1-g(t,q,v)}{g(t,q)}.
$$
\end{remark}

\subsection{\texorpdfstring
{Generating functions, $p>2$}
{Generating functions, 𝑝 > 2}}

The details for $p$ odd are similar to those for $p=2$, 
but are quite a bit harder to keep straight. 
So we have chosen to write out the computation again in full.

For an integer $d\ge 0$, 
the degree $d$ part of $\cE_p(n)$ has a basis consisting of the monomials 
$x_1^{a_1}\dots x_n^{a_n}\xi_1^{\ep_1}\dots \xi_n^{\ep_n}$ 
such that the $a_j$ are non-negative integers, each $\ep_j$ is zero or one, and 
\[ \textstyle d=\frac{2p-2}{p}a_1+\frac{2p^2-2}{p^2}a_2+\dots+\frac{2p^n-2}{p^n}a_n
\ +\ \frac{p-2}{p}\ep_1+\frac{p^2-2}{p^2}\ep_2+\dots+\frac{p^n-2}{p^n}\ep_n. \]
Let $N_p(m,d)$ be the number of such monomials in degree $d$ with
$a_m+\ep_m>0$. Thus the dimension of $\cE_n(p)_d$ is $\sum_{m=1}^nN_p(m,d)$, and
the dimension of $\cE_\infty(p)_d$ is $\sum_{m=1}^\infty N_p(m,d)$.

Set $z_1=x_1^p$, $\zeta_1=x_1^{p-1}\xi_1$, and 
$z_i=x_{i-1}^{-1}x_i^p$, $\zeta_i=x_{i-1}^{-1}x_i^{p-1}\xi_i$ for $i\ge 2$.
Then we have 
$ |z_i|=2p-2$, $|\zeta_i|=2p-3$ $(1\le i\le n) $
and
\[ (x_1^{a_1}x_2^{a_2}\dots)(\xi_1^{\ep_1}\xi_2^{\ep_2}\dots) = 
(z_1^{b_1}z_2^{b_2}\dots)(\zeta_1^{\ep_1}\zeta_2^{\ep_2}\dots) \]
where 
$ a_i=pb_i+(p-1)\ep_i-b_{i+1}-\ep_{i+1}$. 
Then the conditions on the $b_i$ and the $\ep_i$ are that 
$b_i$ are non-negative integers, $\ep_i=0$ or $1$, and 
$ pb_i+(p-1)\ep_i\ge b_{i+1}+\ep_{i+1}$ for $i\ge 1$. 

Set $\mu_m(q)=\sum_{d=0}^\infty N_p(m,d)q^d$, and $\mu_0(q)=1$. Then we have
\[ \mu_m(q)=\sum_{b_1+\ep_1=1}^\infty \sum_{b_2+\ep_2=1}^{pb_1+(p-1)\ep_1}\cdots 
\sum_{b_m+\ep_m=1}^{pb_{m-1}+(p-1)\ep_{m-1}} q^{(2p-2)(b_1+\dots+b_m)+(2p-3)(\ep_1+\dots+\ep_m)}. \]
We would like to compute $\mu_m(q)$. As in the case $p=2$, 
we introduce auxiliary 
variables $q_1,q_2,..., w_1,w_2,...$ corresponding to the statistics 
$b_1,b_2,...$, $\varepsilon_1,\varepsilon_2,...$; i.e., we define the multivariate Poincar\'e series of $\cE_n(p)$
\[ \mu_m(q_1,...,q_m; w_1,...,w_m):=\!\!
%\] \[=
\sum_{b_1+\ep_1=1}^\infty\!\! \sum_{b_2+\ep_2=1}^{pb_1+(p-1)\ep_1}\cdots\!\!\!
\sum_{b_m+\ep_m=1}^{pb_{m-1}+(p-1)\ep_{m-1}}\!\! 
q_1^{b_1}\dots q_m^{b_m}w_1^{\ep_1}\dots w_m^{\ep_m} \]
so that the usual Poincar\'e series of this algebra is 
\[ \mu_m(q)=\mu_m(q^{2p-2},...,q^{2p-2};q^{2p-3},...,q^{2p-3}). \]  
We have 
\begin{equation}\label{sumfor} 
\sum_{b+\ep=1}^s 
q^{b}w^{\ep}
=\frac{(w+q)(1-q^{s})}{1-q}. \\
\end{equation}
So, summing over $b_m,\ep_m$, we get
\begin{align*}
\mu_m&=\frac{w_m+q_m}{1-q_m}
\biggl(\mu_{m-1}-
\sum_{b_1+\ep_1=1}^\infty\sum_{b_2+\ep_2=1}^{pb_1+(p-1)\ep_1}\dots \\
&\qquad\dots\sum_{b_{m-1}+\ep_{m-1}=1}^{pb_{m-2}+(p-1)\ep_{m-2}}
q_1^{b_1}\dots q_{m-2}^{b_{m-2}}w_1^{\ep_1}\dots w_{m-2}^{\ep_{m-2}}
(q_{m-1}q_m^p)^{b_{m-1}}(w_{m-1}q_m^{p-1})^{\ep_{m-1}} \biggr). 
\end{align*}
Thus, summing over $b_{m-1},\ep_{m-1}$ and using \eqref{sumfor} again,
we have 
{\small
\begin{align*}
\mu_m&=\frac{w_m+q_m}{1-q_m}\biggl(\mu_{m-1}
-\frac{w_{m-1}q_m^{p-1}+q_{m-1}q_m^{p}}{1-q_{m-1}q_m^p}
\biggl(\mu_{m-2}-{}
\sum_{b_1+\ep_1=1}^\infty\sum_{b_2+\ep_2=1}^{pb_1+(p-1)\ep_1}\dots\\
\dots&\!\!\sum_{b_{m-2}+\ep_{m-2}=1}^{pb_{m-3}+(p-1)\ep_{m-3}}
q_1^{b_1}\dots q_{m-3}^{b_{m-3}}w_1^{\ep_1}\dots w_{m-3}^{\ep_{m-3}}
(q_{m-2}q_{m-1}^pq_m^{p^2})^{b_{m-2}}(w_{m-2}q_{m-1}^{p-1}q_m^{p^2-p})^{\ep_{m-2}}
\biggr)\biggr). 
\end{align*}
}
Continuing inductively and using that $\mu_0=1$, we obtain
\[ \sum_{i=0}^m(-1)^i\mu_{m-i}\ell_{i,p} = 
\begin{cases} 0&m>0\\1&m=0 \end{cases} \]
where
\begin{multline*} 
\ell_{i,p}(q)=
\Bigl(\frac{w_m+q_m}{1-q_m}\Bigr)\Bigl(
\frac{w_{m-1}q_m^{p-1}+q_{m-1}q_m^{p}}{1-q_{m-1}q_m^p}\Bigr)
\Bigl(\frac{w_{m-2}q_{m-1}^{p-1}q_m^{p^2-p}+q_{m-2}q_{m-1}^{p}q_m^{p^2}}{1-q_{m-2}q_{m-1}^pq_m^{p^2}}\Bigr)\\
\cdots
\Bigl(\frac{w_{m-i+1}q_{m-i+2}^{p-1}...q_m^{p^{i-1}-p^{i-2}}+q_{m-i+1}q_{m-i+2}^{p}q_m^{p^{i-1}}}{1-q_{m-i+1}q_{m-i+2}^p...q_m^{p^{i-1}}}\Bigr)
\end{multline*}
Introducing a new variable $t$, we rewrite this as
$\displaystyle \Bigl(\sum_{j=0}^\infty t^j\mu_j\Bigr)\Bigl(\sum_{i=0}^\infty (-1)^it^i\ell_{i,p}\Bigr)=1$, 
so $\mu_j$ can be determined from the generating function 
\[ \sum_{j=0}^\infty t^j\mu_j=\frac{1}{\sum_{i=0}^\infty (-1)^it^i\ell_{i,p}}. \]

In particular, setting $w_i=q^{2p-3}$, $q_i=q^{2p-2}$, we get 
$$
\ell_{i,p}(q)=
q^{(2p-2)(p^i-1)-i}\frac{(1+q)(1+q^{2p-1})...(1+q^{2p^{i-1}-1})}{(1-q^{2p-2})(1-q^{2p^2-2})...(1-q^{2p^i-2})}.
$$

Thus we obtain the following result. 

\begin{theorem}
We have
{\small
\[ \sum_{m,d=0}^\infty N_p(m,d)t^m \ =\  
\left(\sum_{i=0}^\infty(-1)^it^iq^{(2p-2)(p^i-1)-i}\frac{(1+q)(1+q^{2p-1})...(1+q^{2p^{i-1}-1})}{(1-q^{2p-2})(1-q^{2p^2-2})...(1-q^{2p^i-2})}\right)^{-1}. \]
}
\end{theorem}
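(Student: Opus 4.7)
The theorem is essentially the culmination of the recursion already set up in the preceding paragraphs, and the task is to formalise that computation as a clean induction. Working in the formal power series ring $\bZ[[q_1,q_2,\ldots;w_1,w_2,\ldots]]$, I would first establish the orthogonality identity
$$\sum_{i=0}^{m}(-1)^{i}\,\mu_{m-i}(q_1,\ldots,q_{m-i};w_1,\ldots,w_{m-i})\,\ell_{i,p} \ = \ \delta_{m,0}$$
by induction on $m$, where $\ell_{i,p}$ is the $i$-fold product introduced in the text. The base case is $\mu_{0}=1$. For the inductive step I apply the geometric identity \eqref{sumfor} to the innermost summation over $b_{m}+\ep_{m}$ in the definition of $\mu_{m}$. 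This extracts the factor $(w_{m}+q_{m})/(1-q_{m})$ and splits $\mu_{m}$ as $(w_{m}+q_{m})/(1-q_{m})$ times $\mu_{m-1}$ minus a residual iterated sum in which the variables $q_{m-1},w_{m-1}$ are rescaled to $q_{m-1}q_{m}^{p}$ and $w_{m-1}q_{m}^{p-1}$ respectively. Iterating this peeling $i$ times produces $\mu_{m-i}$, in suitably rescaled variables, multiplied by the $i$-th partial product defining $\ell_{i,p}$; termination at $\mu_{0}=1$ yields the desired telescoping.

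Next I would multiply the identity by $t^{m}$ and sum over $m\ge 0$ to rephrase it as the generating-function equation
$$\Bigl(\sum_{j\ge 0}\mu_{j}\,t^{j}\Bigr)\Bigl(\sum_{i\ge 0}(-1)^{i}\ell_{i,p}\,t^{i}\Bigr) \ = \ 1,$$
so that the first series is the multiplicative inverse of the second. Finally I would specialise $q_{i}\mapsto q^{2p-2}$ and $w_{i}\mapsto q^{2p-3}$: the first factor becomes $\sum_{j,d}N_{p}(j,d)\,t^{j}q^{d}$, and the $k$-th factor of $\ell_{i,p}$ evaluates, after pulling out the lower power of $q$ from its binomial numerator, to $q^{\ast}(1+q^{2p^{k-1}-1})/(1-q^{2p^{k}-2})$. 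Summing the extracted exponents over $k=1,\ldots,i$ collects into the overall prefactor stated in the theorem, and inverting the second series delivers the claimed formula.

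The main obstacle is the bookkeeping of exponents in the inductive peeling step. One has to verify carefully that after $i$ rounds of peeling the variable $q_{m-i+j}$ has acquired exactly the exponent $p^{j-1}$ in the denominator of the $i$-th factor of $\ell_{i,p}$ (and correspondingly $p^{j-1}-p^{j-2}$ in the $w$-term of the numerator), while the earlier variables $q_{1},\ldots,q_{m-i}$ remain untouched so that $\mu_{m-i}$ is genuinely recovered as a factor. This is true because the coupling constraint $b_{j+1}+\ep_{j+1}\le pb_{j}+(p-1)\ep_{j}$ involves only consecutive subscripts, so the rescaling factor at each peel is obtained by raising the previous scaling to the $p$-th power (with the $w$-side picking up an extra $(p-1)/p$). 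Once this propagation is pinned down by a straightforward induction, the specialisation and the formal inversion are routine.
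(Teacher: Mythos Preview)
Your plan is correct and mirrors the paper's argument: iteratively apply \eqref{sumfor} to peel off the innermost sum, obtaining the telescoping identity $\sum_{i=0}^m(-1)^i\mu_{m-i}\ell_{i,p}=\delta_{m,0}$, then pass to the $t$-generating function and specialise $q_j\mapsto q^{2p-2}$, $w_j\mapsto q^{2p-3}$. One small slip: the $\mu_{m-i}$ that emerges at each stage is in the \emph{original} variables $q_1,\dots,q_{m-i};w_1,\dots,w_{m-i}$ (as you yourself note later), not rescaled ones---only the residual sum carries rescaled variables, and this is precisely what makes the generating-function inversion go through cleanly.
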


\begin{remark}
Recall (\cite{Benson/Etingof/Ostrik}, Subsection 4.14) that the
principal block of the category $\Ver_{p^{n+1}}^+$ (i.e., the block 
of the unit object) is equivalent to the category of modules in 
$\Ver_{p^{n}}$ over the algebra $A:=\Lambda V$, where 
$V=\bT_1$ is the generating object of $\Ver_{p^{n}}$. Thus 
the group $\bG_m$ acts on $A$ by scaling $V$. This action gives 
rise to an action of $\bG_m$ on
$\Ext^\bullet_{\Ver_{p^{n+1}}}(\one,\one)$, i.e., a $\bZ$-grading on 
each cohomology group. We expect that on $\cE_n(p)$, this grading 
is given by the degree with respect to the variables $z_n$ and 
$\zeta_n$. In other words, we expect that the 2-variable Poincar\'e 
series of $\cE_n(p)$ taking into account this grading is 
$\mu_m(q^{2p-2},...,q^{2p-2},(qv)^{2p-2};q^{2p-3},...,q^{2p-3},(qv)^{2p-3})$. 

So let us compute the generating function 
$$
\mu(t,q,v):=\sum_{m=0}^\infty \mu_m(q^{2p-2},...,q^{2p-2},(qv)^{2p-2};q^{2p-3},...,q^{2p-3},(qv)^{2p-3})t^m.
$$
Arguing as above, we get 
$\mu(t,q,v)-\mu(t,q)+\mu(t,q)g(t,q,v)=1$,
where 
{\small
\begin{gather*} 
g(t,q,v):=\\
 \sum_{i=0}^\infty  
\frac{(-1)^it^iq^{(2p-2)(p^i-1)-i}v^{(2p-2)(p^{i-1}-1)-1}(1+qv)(1+q^{2p-1}v^{2p-2})...(1+q^{2p^{i-1}-1}v^{(2p-2)p^{i-2}})}
{(1-q^{2p-2}v^{2p-2})(1-q^{2p^2-2}v^{(2p-2)p})...(1-q^{2p^i-2}v^{(2p-2)p^{i-1}})}.
\end{gather*}
}%
Thus, we have 
$$
\mu(t,q,v)=1+\frac{1-g(t,q,v)}{g(t,q)},
$$
where $g(t,q):=g(t,q,1)$. 
\end{remark}

Here is a table of the coefficients in the reciprocal of the generating function for 
$N_p(m,d)$ with $p=3$.
{\tiny
\[ \setlength{\arraycolsep}{0.22mm}\hspace{-3mm}
\begin{array}{l|rrrrrrrrrrrrrrrrrrrrrrrrrrrrrrrrrrrrrrr}
&1&q\,&q^2&q^3&q^4&q^5&q^6&q^7&q^8&q^9&q^{10}&q^{11}
&q^{12}&q^{13}&q^{14}&q^{15}&q^{16}&q^{17}&q^{18}&q^{19\!}&
q^{20\!}&q^{21\!}&q^{22\!}&q^{23\!}&q^{24\!}&q^{25\!}&q^{26\!}&q^{27\!}&q^{28\!}
&q^{29\!}&q^{30\!}&q^{31\!}&q^{32\!}&q^{33\!}&q^{34\!}&q^{35\!} &q^{36\!}
 \\ \hline
1&1\\
t&&&&-1&-1&&&-1&-1&&&-1&-1&&&-1&-1&&&-1&-1&&&-1&-1
&&&-1&-1&&&-1&-1&&&-1&-1\\
t^2&&&&&&&&&&&&&&&1&1&&&1&2&1&&1&2&1&&1&2&1
&&2&3&1&&2&4&2\\
t^3&&&&&&&&&&&&&&&&&&&&&&&&&&&&&&&&&&&&-1&-1
\end{array} \]
}%end tiny
Reciprocating, we obtain the table of coefficients $N_3(m,d)$. These
tables become sparser as the prime increases.

\section{\texorpdfstring
{Action of the Steenrod operations}
{Action of the Steenrod operations}}

In this section, we examine possible actions of the Steenrod
operations on the algebra $\cE_\infty(p)$. 

\subsection{\texorpdfstring
{Steenrod operations for $p=2$} 
{Steenrod operations for 𝑝 = 2}}

We begin with the easier
case $p=2$.

\begin{theorem}
There
is only one possibility for the action of the Steenrod operations
on $\cE_\infty(2)$ in such a way that the Cartan formula 
\[ \Sq^n(xy) = \sum_{i+j=n}\Sq^i(x)\Sq^j(y) \]
and the unstable conditions $\Sq^i(x) = x^2$ for 
$i=|x|$ and $\Sq^i(x)=0$ for $i>|x|$ hold.
Namely for $x\in\cE_\infty(2)$,  we have $\Sq^{|x|}(x)=x^2$, and
$\Sq^i(x)=0$ for  $i \ne |x|$.
In particular, if $x$ has degree greater than zero then $\Sq^0(x)=0$.
\end{theorem}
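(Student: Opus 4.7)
The plan is to work up the hierarchy $\cE_0(2)\subset\cE_1(2)\subset\cE_2(2)\subset\cdots$, using that each $\cE_n(2)$ is a finitely presented integral domain (Theorem~\ref{th:Gorenstein}) and leveraging Cartan on explicit relations. Since Cartan makes $\Sq^0$ a ring endomorphism in characteristic~$2$ and the unstable condition fixes $\Sq^i(x)$ for $i\ge|x|$, it suffices to show $\Sq^i(x)=0$ for each generator $x$ of $\cE_n(2)$ and each $0\le i<|x|$.

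The decisive first step is to pin down $\Sq^0(u)$ for $u=y_1=x_1^2$ of degree~$1$. By degree, $\Sq^0(u)=\lambda u$ for some $\lambda\in\bk$. Working in $\cE_2(2)=\bk[u,v,w]/(uw-v^2)$ with $|v|=2$, $|w|=3$, I would apply Cartan to the relation $uw=v^2$ at $\Sq^3$. The unstable condition on $u$ kills $\Sq^2(u)$ and $\Sq^3(u)$, so
\[ \Sq^3(uw)=\Sq^0(u)\Sq^3(w)+\Sq^1(u)\Sq^2(w)=\lambda uw^2+u^2\Sq^2(w), \]
while $\Sq^3(v^2)=2v^2\Sq^1(v)=0$ in characteristic~$2$. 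In the basis $\{u^7,u^5v,u^4w,u^2vw,uw^2\}$ of $\cE_2(2)_7$, the product $u^2\Sq^2(w)$ lies in the span of the first four elements, since $\Sq^2(w)\in\cE_2(2)_5$ has basis $\{u^5,u^3v,u^2w,vw\}$. So the $uw^2$-coefficient of $\lambda uw^2+u^2\Sq^2(w)=0$ is exactly $\lambda$, forcing $\lambda=0$. The integral domain property then gives $\Sq^2(w)=0$. Applying Cartan to $uw=v^2$ at $\Sq^0$, $\Sq^1$, $\Sq^2$ in turn propagates the conclusion to $\Sq^0(v)=\Sq^0(w)=0$ and yields the relation $u^2\Sq^1(w)=\Sq^1(v)^2$, constraining the remaining degree-one operations in terms of one another.

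To extend these vanishings to all $\cE_n(2)$, I would induct on~$n$. For each new generator of $\cE_n(2)$---for instance $\tau=x_1x_2x_3^2$ in $\cE_3(2)$---apply Cartan to analogous Koszul-type relations such as $\tau^2=u\cdot x_2^2x_3^4$ and $v\tau=u\cdot x_2^3x_3^2$, using the integral domain property to divide through and isolate each unknown Steenrod coefficient. The result then extends to the colimit $\cE_\infty(2)$ via compatibility with the inclusions $\cE_{n-1}(2)\hookrightarrow\cE_n(2)$. The main obstacle is the inductive step: at each level one must exhibit enough Cartan relations to kill every fresh parameter introduced by the new generators, and verify non-degeneracy in an explicit basis of $\cE_n(2)_d$. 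A cleaner framework---perhaps the Koszul-complex description developed later in the paper, or the $\mu_{2^n}$-invariant description $\cE_n(2)=\bk[x_1,\dots,x_n]^{\mu_{2^n}}$---seems needed to handle this uniformly rather than case by case.
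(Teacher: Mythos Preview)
Your base case is correct and in fact coincides with the paper's: the relation $(x_1^2)(x_2^4)=(x_1x_2^2)^2$ is exactly $uw=v^2$, and the paper also applies $\Sq^3$ to it to conclude that $\Sq^0(x_1^2)$ is divisible by $(x_1^2)^2$ and hence zero.

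The gap is precisely where you locate it: the inductive step. You propose to handle \emph{every} new generator of $\cE_n(2)$ by exhibiting enough Cartan relations, but the number of such generators grows with $n$ and you do not give a uniform scheme. The paper avoids this entirely by a far more economical induction. It focuses \emph{only} on the parameters $y_n=x_n^{2^n}$, one new element per level, and uses two relations of a fixed shape. First,
\[
(x_{n-1}^{2^{n-1}})^{2^{n-1}-1}\,(x_n^{2^n})=(x_{n-1}^{2^{n-1}-1}x_n^2)^{2^{n-1}};
\]
applying the total operation $T$ and the inductive hypothesis $T(x_{n-1}^{2^{n-1}})=(x_{n-1}^{2^{n-1}})^2$, the right side is a $2^{n-1}$st power, so $T(x_n^{2^n})$ vanishes in degrees not congruent to $-2$ modulo $2^{n-1}$, leaving only $\Sq^{2^n-1}$ and $\Sq^{2^{n-1}-1}$ as possible nonzero operations. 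Second,
\[
(x_n^{2^n})(x_{n+1}^{2^{n+1}})^{2^n-1}=(x_nx_{n+1}^{2^{n+1}-2})^{2^n};
\]
applying $T$ and reading off the appropriate degree shows $\Sq^{2^{n-1}-1}(x_n^{2^n})$ is divisible by $(x_n^{2^n})^2$, hence zero. This completes the induction $T(x_n^{2^n})=(x_n^{2^n})^2$ for all $n$.

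The clincher you are missing is the $2^n$th power trick. For an arbitrary monomial $x=x_1^{a_1}\cdots x_n^{a_n}\in\cE_n(2)$, the power $x^{2^n}$ lies in the subring generated by $y_1,\dots,y_n$, on which $T$ is now known to square. Hence $T(x)^{2^n}=T(x^{2^n})=(x^{2^n})^2=(x^2)^{2^n}$, and since $\cE_\infty(2)$ is an integral domain of characteristic two this forces $T(x)=x^2$. This single step replaces the open-ended search for relations among all generators that your proposal leaves unresolved.
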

\begin{proof}
We begin by examining the elements $x_n^{2^n}$ of degree $2^n-1$,
and we show by induction on $n$ that $\Sq^i(x_n^{2^n})=0$ for
$i<2^n-1$. 
Let $T=\Sq^0+\Sq^1+\Sq^2+\cdots$ be the total Steenrod
operation, which by the Cartan formula is a ring homomorphism.
In particular, note that
$\Sq^i$ of a $2^n$th power
vanishes when $i$ is not divisible by $2^n$.
Our goal is to show that $T(x_n^{2^n})=(x_n^{2^n})^2$ for all $n\ge 1$.

We begin with $n=1$. We have
$ (x_1^2)(x_2^4) = (x_1x_2^2)^2. $
Applying $\Sq^3$ to this relation,
we obtain 
\[ \Sq^0(x_1^2)(x_2^4)^2+ (x_1^2)^2 \Sq^2(x_2^4) =
  \Sq^3((x_1x_2^2)^2) = 0. \]
Therefore $\Sq^0(x_1^2)$ is divisible by $(x_1^2)^2$, and
is hence zero, and so $T(x_1^2)=(x_1^2)^2$.

Now for the inductive step. Assume that 
$T(x_{n-1}^{2^{n-1}})=(x_{n-1}^{2^{n-1}})^2$.
We have the relation 
\[ (x_{n-1}^{2^{n-1}})^{2^{n-1}-1}\,(x_n^{2^n})=
(x_{n-1}^{2^{n-1}-1}x_n^2)^{2^{n-1}}. \]
in $\cE_\infty(2)$.
Applying $T$, we get
\[ (x_{n-1}^{2^{n-1}})^{2^n-2}\,T(x_n^{2^n})=
(T(x_{n-1}^{2^{n-1}-1}x_n^2))^{2^{n-1}}. \]
The right hand side is zero in degrees not divisible by $2^{n-1}$.
It follows that $T(x_n^{2^n})$ is zero in degrees not congruent
to minus two modulo $2^{n-1}$. So the only possibilities for non-zero
Steenrod operations on $x_n^{2^n}$ are $\Sq^{2^n-1}$ and
$\Sq^{2^{n-1}-1}$.

We also have the relation 
\[ (x_n^{2^n})(x_{n+1}^{2^{n+1}})^{2^n-1}=
(x_nx_{n+1}^{2^{n+1}-2})^{2^n} \]
in $\cE_\infty(2)$. Applying $T$, we get
\[ T(x_n^{2^n})(T(x_{n+1}^{2^{n+1}}))^{2^n-1} = 
(T(x_nx_{n+1}^{2^{n+1}-2}))^{2^n}. \]
The right hand side is zero in degrees not divisible by $2^n$.
So in particular, examining the term in degree $2^{n+1}(2^n-1)-2^{n-1}$, we have
\[ (x_n^{2^n})^2\Sq^{(2^{n+1}-1)(2^n-1)-2^{n-1}}((x_{n+1}^{2^{n+1}})^{2^n-1})+
(\Sq^{2^{n-1}-1}(x_n^{2^n}))(x_{n+1}^{2^{n+1}})^{2^{n+1}-2} = 0. \]
So $\Sq^{2^{n-1}-1}(x_n^{2^n})$ is divisible by $(x_n^{2^n})^2$,
and is hence zero. Hence $T(x_n^{2^n})=(x_n^{2^n})^2$, and
the inductive step is complete.

Finally, given any monomial $x=x_1^{a_1}\dots x_n^{a_n}
\in\cE_\infty(2)$, we raise it to the $2^n$th power to obtain
an element of the subring generated by $x_1^2, x_2^4, x_3^8,\cdots$.
Then 
$ T(x)^{2^n}=T(x^{2^n})=(x^{2^n})^2=(x^2)^{2^n}$, 
and  since we are in an integral domain of characteristic two,
this implies that $T(x)=x^2$.
\end{proof}

\iffalse
\begin{remark}
It is tempting to try to relate $R(n,2)$ with the generalised Dickson invariants
of Arnon~\cite{Arnon:2000a}. However, there are problems with this
approach, because it is not true that if $R$ is an algebra over the
complete Steenrod algebra then $\Int(R)$ is an algebra over the ordinary
Steenrod algebra. For example, in the Dickson invariants of rank two,
we have $\Sq^2(c_{2,1})=c_{2,1}^2$ and $\Sq^2(c_{2,1}^2)=c_{2,0}^2$.
If $c_{2,1}$ is supposed to correspond to our $x_1^4$ and $c_{2,0}$ is
supposed to  correspond to our 
$x_2^4$ then we'd have $\Sq^1\Sq^1(x_1^2)\ne 0$.
\end{remark}
\fi

\subsection{\texorpdfstring
{Steenrod operations for $p>2$}
{Steenrod operations for 𝑝 > 2}}

Next, we examine possible actions of the Steenrod
operations on the algebra $\cE_\infty(p)$ for $p$ odd. Our conclusions
are weaker than in the case $p=2$, because of the existence of nilpotent elements.

\begin{theorem}
Suppose that the Steenrod operations act on $\cE_\infty(p)$ with $p$
odd in such a way that the Cartan formula and unstable conditions
hold. Then on the subring spanned by the monomials not involving any
of the $\xi_i$, we have $\cP^m(x)=x^p$ and $\cP^i(x)=0$ for $i\ne n$,
where $|x|=2m$.
\end{theorem}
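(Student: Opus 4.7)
The plan is to follow the structure of the preceding $p=2$ proof. Let $S \subset \cE_\infty(p)$ denote the subring spanned by monomials not involving any $\xi_i$; then $S$ embeds in the polynomial ring $\bk[x_1,x_2,\dots]$ and is in particular an integral domain of characteristic $p$. Write $T := \sum_{j \ge 0} \cP^j$ for the total operation; by Cartan's formula, $T$ is a ring endomorphism of $\cE_\infty(p)$. The goal reduces to establishing $T(y_n) = y_n^p$ for every $n \ge 1$, where $y_n := x_n^{p^n}$; the claim for a general $x \in S$ will then follow exactly as in the $p = 2$ case.

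The inductive input is the relation
\[
y_{n-1}^{p-1}\, y_n \ =\ B_{n-1}^p, \qquad B_{n-1} := x_{n-1}^{(p-1)p^{n-2}} x_n^{p^{n-1}},
\]
valid in $\cE_\infty(p)$ for $n \ge 2$ (a short computation shows $B_{n-1}$ has integer degree), together with the analogue $y_1^{p-1} y_2 = (x_1^{p-1} x_2^p)^p$ for the base case $n=1$. Assuming inductively that $T(y_{n-1}) = y_{n-1}^p$, the only nonvanishing contribution to $\cP^i(y_{n-1}^{p-1})$ occurs at $i = (p-1)(p^{n-1}-1)$, so Cartan's formula applied to $\cP^k$ of the above relation yields
\[
y_{n-1}^{p(p-1)} \cdot \cP^{k - (p-1)(p^{n-1}-1)}(y_n) \ =\ \cP^k(B_{n-1}^p),
\]
and the right-hand side vanishes unless $p \mid k$. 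Setting $j = k - (p-1)(p^{n-1}-1)$, this forces $\cP^j(y_n) = 0$ unless $j \equiv -1 \pmod p$.

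To kill the remaining candidates $j \in \{p-1,\, 2p-1,\, \dots,\, p^n - p - 1\}$, combine the above with the analogous ``upward'' relation $y_n\, y_{n+1}^{p-1} = (x_n^{p^{n-1}} x_{n+1}^{(p-1)p^n})^p$ and apply a similar Cartan analysis. The combined divisibility constraints, together with the degree bound $|\cP^j(y_n)| < |y_n^p|$ for $j < p^n - 1$ and unique factorization in the polynomial ring $\bk[x_1, \dots, x_{n+1}]$, force each residual $\cP^j(y_n)$ to vanish. The base case $n=1$ is handled in the same spirit as the $p=2$ argument, extracting from $T(y_1)^{p-1} T(y_2) = T(x_1^{p-1}x_2^p)^p$ an identity on $\cP^j(y_1)$ that combines degree constraints with coprimality of $x_1$ and $x_2$ in the polynomial UFD.

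Once $T(y_n) = y_n^p$ is known for all $n$, take any $x \in S$ with $|x| = 2m$ and choose $n$ large enough so that $x \in \bk[x_1, \dots, x_n]$. Then $x^{p^n} \in \bk[y_1, \dots, y_n]$, so $T(x)^{p^n} = T(x^{p^n}) = (x^{p^n})^p = (x^p)^{p^n}$, and the injectivity of Frobenius on the integral domain $S$ gives $T(x) = x^p$; the unstable axiom then forces $\cP^m(x) = x^p$ and $\cP^j(x) = 0$ for $j \ne m$. The main obstacle is the inductive step that rules out the surviving $\cP^j(y_n)$ with $j \equiv -1 \pmod p$ and $j < p^n - 1$: unlike in the $p=2$ setting, a single relation is not enough, and orchestrating the constraints from several relations within the polynomial UFD is the technical heart of the argument. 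This is also the point at which the restriction to $S$ is essential, since nilpotent elements (coming from products $\xi_i \xi_j \cdots$) could otherwise obstruct the final Frobenius-injectivity step.
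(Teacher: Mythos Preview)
Your overall strategy is the paper's strategy: show $T(y_n)=y_n^p$ by induction on $n$, using multiplicative relations among the $y_i$ together with the Cartan formula and a UFD divisibility argument, and then pass to a general $x\in S$ via Frobenius injectivity. The gap is in the specific relations you chose and in the base case.

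\textbf{The downward relation is too weak.} You use $y_{n-1}^{\,p-1}y_n=B_{n-1}^p$, and correctly deduce that $\cP^j(y_n)=0$ unless $j\equiv -1\pmod p$. But this leaves $p^{n-1}-1$ candidates $j\in\{p-1,2p-1,\dots,p^n-p-1\}$ to kill. The paper instead uses
\[
y_{n-1}^{\,p^{n-1}-1}\,y_n=\bigl(x_{n-1}^{\,p^{n-1}-1}x_n^p\bigr)^{p^{n-1}},
\]
so the right side is a $p^{n-1}$-th power and the same analysis forces $j\equiv -1\pmod{p^{n-1}}$, leaving only the $p$ candidates $j=p^n-ip^{n-1}-1$ with $0\le i\le p-1$.

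\textbf{The upward relation is too weak, and you need $T(y_{n+1})$ which you don't have.} With your relation $y_n\,y_{n+1}^{\,p-1}=C^p$, set $k=j+(p-1)(p^{n+1}-1)$ so that the Cartan term with $a=j$ pairs with the top operation on $y_{n+1}^{\,p-1}$. Then $k\equiv 0\pmod p$, so $\cP^k(C^p)$ need not vanish; meanwhile the Cartan term with $a=p^n-1$ involves $\cP^{b}(y_{n+1}^{\,p-1})$ with $b$ strictly below top, about which you know nothing. Thus no divisibility by $y_n^p$ can be extracted. The paper uses instead
\[
y_n\,y_{n+1}^{\,p^n-1}=\bigl(x_n\,x_{n+1}^{\,p^{n+1}-p}\bigr)^{p^n},
\]
and an inner downward induction on $i$: with $k=(p^n-ip^{n-1}-1)+(p^{n+1}-1)(p^n-1)$ one checks $p^n\nmid k$, the Cartan terms with smaller $a$ vanish by unstability, and those with larger $a$ (other than the top) vanish by the inner hypothesis. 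What remains is exactly
\[
\cP^{\,p^n-ip^{n-1}-1}(y_n)\cdot y_{n+1}^{\,p(p^n-1)}+y_n^p\cdot(\text{something})=0,
\]
so componentwise in the polynomial ring the target is divisible by $y_n^p$ and hence zero by degree.

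\textbf{The base case is pointed the wrong way.} Your relation $y_1^{\,p-1}y_2=(x_1^{p-1}x_2^p)^p$ involves both $T(y_1)$ and $T(y_2)$, neither known. The paper uses $y_1\,y_2^{\,p-1}=(x_1x_2^{p(p-1)})^p$, applies $\cP^{p^2(p-1)-1}$ (and then inducts downward), and only ever needs the unstable condition on $y_2^{\,p-1}$, not $T(y_2)$.

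In short, the ``orchestrating the constraints'' step you flag as the technical heart cannot be completed with exponents $p-1$; you must use $p^{n-1}-1$ and $p^n-1$ as the paper does, together with a secondary induction over the $p-1$ surviving indices.
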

\begin{proof}
Let $T$ be the total Steenrod operation $\cP^0+\cP^1+\cdots$.
The argument to show that $T(x_n^{p^n})=(x_n^{p^n})^p$ for $p$ odd is similar
to the case $p=2$, but involves one more induction. We therefore write
it out in full.

Our first task is to show that $T(x_1^p)=(x_1^p)^p$.
We begin as before with
\[ (x_1^p)(x_2^{p^2})^{p-1} = (x_1x_2^{p(p-1)})^p, \]
a relation of degree $2p^2(p-1)$.
Applying $\cP^{p^2(p-1)-1}$ to this, we get
\[ \cP^{p-2}(x_1^p)\,(x_2^{p^2})^{p(p-1)}+ (x_1^p)^p\,\cP^{(p^2-1)(p-1)-1}((x_2^{p^2})^{p-1})=0. \]
Therefore $\cP^{p-2}(x_1^p)$ is divisible by $(x_1^p)^p$, and
hence it is zero. We work downwards in degree by induction.
Suppose we have shown that $\cP^{p-i}(x_1^p),\dots,\cP^{p-2}(x_1^p)$
are all zero. Then applying $\cP^{p^2(p-1)-i}$ to the above relation,
we get
\[ \cP^{p-i-1}(x_1^p)\,(x_2^{p^2})^{p(p-1)}+ (x_1^p)^p\,\cP^{(p^2-1)(p-1)-i-1}((x_2^{p^2})^{p-1})=0. \]
Therefore $\cP^{p-i-1}(x_1^p)$ is divisible by $(x_1^p)^p$, and hence it
is zero. Once we reach $i=p-1$, we have completed the proof that
$T(x_1^p)=(x_1^p)^p$.

Next, we suppose that we have already shown that
$T(x_{n-1}^{p^{n-1}})=(x_{n-1}^{p^{n-1}})^p$. We have
the relation
\[ (x_{n-1}^{p^{n-1}})^{p^{n-1}-1}(x_n^{p^n})=
(x_{n-1}^{p^{n-1}-1}x_n^p)^{p^{n-1}} \]
in $\cE_\infty(p)$. Applying $T$, we get
\[ (x_{n-1}^{p^{n-1}})^{p^n-p}\,T(x_n^{p^n})=
(T(x_{n-1}^{p^{n-1}-1}x_n^p))^{p^{n-1}}. \]
The right hand side is zero in degrees not divisible by $p^{n-1}$.
So the only possibilities for non-zero Steenrod operations
on $x_n^{p^n}$ are $\cP^{p^n-ip^{n-1}-1}$ for $0\le i\le p-1$.

We also have the relation 
\[ (x_n^{p^n})(x_{n+1}^{p^{n+1}})^{p^n-1}=
(x_nx_{n+1}^{p^{n+1}-p})^{p^n} \]
in $\cE_\infty(p)$. Applying $T$, we get
\[ T(x_n^{p^n})(T(x_{n+1}^{p^{n+1}}))^{p^n-1}=
(T(x_nx_{n+1}^{p^{n+1}-p}))^{p^n}. \]
The right hand side is zero in degrees not divisible by $p^n$. 
We show by induction on $i$ that $\cP^{p^n-ip^{n-1}-1}(x_n^{p^n})=0$
for $1\le i \le p-1$. If we have proved this for smaller values of
$i$, then we get
\[ \cP^{p^n-ip^{n-1}-1}(x_n^{p^n})\,
(x_{n+1}^{p^{n+1}})^{p^{n+1}-p} +
(x_n^{p^n})^p\,
\cP^{(p^{n+1}-1)(p^n-1)-ip^{n-1}}((x_{n+1}^{p^{n+1}})^{p^n-1}) = 0. \]
So $\cP^{p^n-ip^{n-1}-1}(x_n^{p^n})$ is divisible by $(x_n^{p^n})^p$,
and is hence zero. This completes the proof that
$T(x_n^{p^n})=(x_n^{p^n})^p$. 
\end{proof}

\section{The Koszul complex} 

We assume that $p^n>3$. We will consider the symmetric tensor categories 
$\Ver_{p^n}$ over $\bf k$ defined in \cite{Benson/Etingof/Ostrik}. 
Namely, let $\mathcal T_p:={\rm Tilt}SL_2(\bf k)$ be the category 
of tilting modules over $SL_2(\bf k)$. Let $T_i\in \mathcal T_p$ 
be the tilting module for $SL_2(\bf k)$ with highest weight $i$. 
The module $T_{p^{n}-1}$ generates a tensor ideal
$\mathcal I_n\subset  \mathcal T_p$ spanned by  
$T_i$ for $i\ge p^n-1$. We define $\mathcal T_{n,p}$  to be the 
quotient category $\mathcal T_p/{\mathcal I}_n$. Then $\Ver_{p^n}$ 
is the abelian envelope of $\mathcal T_{n,p}$, i.e., the unique abelian 
symmetric tensor category containing $\mathcal T_{n,p}$ such that 
faithful symmetric monoidal functors out of $\mathcal T_{n,p}$
into abelian symmetric tensor categories uniquely factor through $\Ver_{p^n}$. 

More concretely, 
$\Ver_{p^n}$ is the category $R-$mod of finite dimensional modules over the algebra 
$R:={\rm End}(\oplus_{i=p^{n-1}-1}^{p^n-2}T_i)$,\footnote{It 
does not matter whether to take endomorphisms in 
$\mathcal T_p$ or $\mathcal T_{n,p}$ -- the 
corresponding natural map of endomorphism rings is an isomorphism.} 
realized as the homotopy category of projective resolutions 
$P^\bullet$ in $R-$mod with the usual tensor product. Namely, 
it turns out that the tensor product of resolutions is a resolution 
(i.e., acyclic in negative degrees), there is a unit object, and the 
corresponding tensor category is rigid (with $T_i^*\cong T_i$) and 
equipped with a natural faithful symmetric monoidal functor 
$\mathcal T_{n,p}\to \Ver_{p^n}$ given by $P^\bullet \mapsto H^0(P^\bullet)$. 

Let $\bT_i$ be the image of $T_i$ in $\Ver_{p^n}$. In particular, 
we let $V=\bT_1$ be the image of the 2-dimensional irreducible 
representation $T_1$ of $SL_2(\bf k)$, also denoted by $V$ 
(these of course depend on $n$ but to lighten the notation we do not indicate this explicitly). 
Note that in both categories $\Lambda^2 V$ is the unit object and 
$\Lambda^iV=0$ for $i\ge 3$. 
Recall~\cite{Etingof:2018a,Etingof/Harman/Ostrik:2018a} 
that we have the Koszul complex 
$K^\bullet:=S^\bullet V\otimes \Lambda V$ in $\Ver_{p^n}$ 
(i.e., we use the symmetric power superscript as the cohomological degree). 
This complex may also be graded by total degree, which is preserved
by the differential. So it splits into a direct 
sum of complexes $K_m^\bullet$, $m\ge 0$: 
\[ 0\to S^{m-2}V\to S^{m-1}V\otimes V\to S^mV \to 0 \]
(where we agree that $S^jV=0$ if $j<0$). The map 
$S^{m-1}V\otimes V\to S^mV$ in this complex is induced 
by the multiplication map of the algebra $SV$, so it is surjective 
when $m\ne 0$. 

\begin{proposition}\label{pr:exac} 
If $1\le m\le p^n-2$ then the complex $K_m^\bullet$ is exact.
\end{proposition}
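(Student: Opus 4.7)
The plan is to reduce exactness in $\Ver_{p^n}$ to the classical exactness of the Koszul complex for a $2$-dimensional object in $\mathrm{Rep}\,SL_2$, and then argue that the range $m \le p^n - 2$ ensures this exactness descends to $\Ver_{p^n}$ without degeneration. Recall that for the $2$-dimensional standard representation $V$ of $SL_2$ we have the short exact sequence
\[
0 \to S^{m-2}V \to S^{m-1}V \otimes V \to S^m V \to 0
\]
in $\mathrm{Rep}\,SL_2$ for every $m \ge 0$, coming from the truncation of the standard Koszul resolution of the trivial module over $SV$ (this holds in arbitrary characteristic since $V$ is free of rank $2$ and $\Lambda^2 V \cong \one$, $\Lambda^i V = 0$ for $i \ge 3$).

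Next I would identify each term of this sequence with the appropriate object of $\Ver_{p^n}$. The object $V = \bT_1$ is the image of the tilting module $T_1 \in \mathcal T_p$, so by monoidality $V^{\otimes k}$ in $\Ver_{p^n}$ is the image of $T_1^{\otimes k}$, and the symmetric power $S^m V$ is intrinsically defined via the $S_m$-action. The key observation is that for $1 \le m \le p^n - 2$ the costandard module $\nabla(m) = S^m V$ in $\mathrm{Rep}\,SL_2$ has all composition factors of weight $\le m \le p^n - 2$, so no composition factor lies in the tensor ideal generated by $T_{p^n-1}$. Resolving $\nabla(m)$ by tilting modules in $\mathcal T_p$ and applying the (exact) projection $\mathcal T_p \to \mathcal T_{n,p}$ followed by the passage to the abelian envelope $\mathcal T_{n,p} \to \Ver_{p^n}$, one obtains a well-defined nonzero object that I would identify with the intrinsic $S^m V$ in $\Ver_{p^n}$; surjectivity of the multiplication map $V \otimes S^{m-1}V \to S^m V$ (already noted in the setup for $m \ne 0$) is then automatic.

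With these identifications, the Koszul exactness in $\mathrm{Rep}\,SL_2$ transports termwise to the complex $K_m^\bullet$ in $\Ver_{p^n}$, since no relevant composition factor is annihilated by the projection. Injectivity of the left-hand differential $S^{m-2}V \to S^{m-1}V \otimes V$ follows because its image is the kernel of the multiplication map (by the classical argument), and this kernel-image identification is preserved by the exact projection. One should also verify the degenerate boundary case $m=1$ (where $S^{-1}V = 0$ and the complex reduces to the unit isomorphism $\one \otimes V \cong V$) and double-check the case $m = p^n - 2$ directly.

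The main obstacle is the identification of the intrinsic $S^m V$ in $\Ver_{p^n}$ with the image of $\nabla(m)$ from $\mathrm{Rep}\,SL_2$, because $\nabla(m)$ is not tilting for $m \ge p$ and therefore cannot be projected naively through $\mathcal T_p$. The way around this is to work with tilting resolutions and use that $\Ver_{p^n}$ is constructed from $\mathcal T_{n,p}$ precisely as a homotopy category of projective resolutions, so the classical short exact sequence lifts to a short exact sequence of tilting complexes whose projection to $\Ver_{p^n}$ remains exact in the stated range. The cutoff phenomenon at $m = p^n - 1$ (where $S^m V$ does get killed and the Koszul complex acquires cohomology) is exactly what forces the bound $m \le p^n - 2$ in the statement.
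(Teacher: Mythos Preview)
Your broad strategy---reduce to the classical Koszul exactness
\[
0 \to S^{m-2}V \to S^{m-1}V \otimes V \to S^m V \to 0
\]
in $\mathrm{Rep}\,SL_2(\bk)$---is exactly the paper's starting point. But the transport step is where your argument has a real gap, and your proposed workaround does not close it.

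First, the sentence ``no composition factor lies in the tensor ideal generated by $T_{p^n-1}$'' is a category error: the ideal $\mathcal I_n$ is a thick ideal of \emph{tilting} modules, not a Serre subcategory of $\mathrm{Rep}\,SL_2$, so talking about composition factors of $\nabla(m)$ relative to it does not make sense. More seriously, even after you acknowledge that $\nabla(m)$ is not tilting and propose to pass through tilting resolutions, you never explain why the resulting triangle in $\Ver_{p^n}$ is a short exact sequence rather than merely a distinguished triangle with possibly nonzero connecting map. That is precisely the content that needs proof, and it does not follow formally from the construction of $\Ver_{p^n}$ as a homotopy category.

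The paper handles this by testing exactness against the projective--injective objects $\bT_i$, $p^{n-1}-1\le i\le p^n-2$. Writing $\mathrm{Hom}_{\Ver_{p^n}}(S^mV,\bT_i)=\mathrm{Hom}_{\Ver_{p^n}}(V^{\otimes m},\bT_i)^{S_m}$ reduces to Homs between tilting objects, which (by results of \cite{Benson/Etingof/Ostrik}) agree with the corresponding Homs in $\mathrm{Rep}\,SL_2$ in the range $m\le p^n-2$. One is then left with showing that
\[
0 \to \mathrm{Hom}_{SL_2}(S^mV,T_i) \to \mathrm{Hom}_{SL_2}(S^{m-1}V\otimes V,T_i) \to \mathrm{Hom}_{SL_2}(S^{m-2}V,T_i) \to 0
\]
is exact. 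Left exactness is automatic; right exactness requires the vanishing $\Ext^1_{SL_2}(S^mV,T_i)=0$, which the paper proves as a separate lemma using that $T_i$ is divisible by the Steinberg module $\mathrm{St}_{n-1}$ and that $S^mV\otimes \mathrm{St}_{n-1}$ is tilting for $m\le p^n-1$. This $\Ext^1$ vanishing is the substantive step your outline is missing; without it (or an equivalent), there is no mechanism forcing your ``projected'' sequence to remain short exact.
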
 
\begin{proof} 
It suffices to show that for any 
$i\in [p^{n-1}-1,p^n-2]$ the sequence\pagebreak[3]
\begin{equation}\label{e1}
0\to {\rm Hom}_{\Ver_{p^n}}(S^{m}V,\bT_i)\to 
{\rm Hom}_{\Ver_{p^n}}(S^{m-1}V\otimes V,\bT_i)\to 
{\rm Hom}_{\Ver_{p^n}}(S^{m-2}V,\bT_i) \to 0
\end{equation}
is exact. This sequence can be rewritten as 
{\small
\begin{equation}\label{e2}
0\to {\rm Hom}_{\Ver_{p^n}}(V^{\otimes m},\bT_i)^{S_{m}}\to 
{\rm Hom}_{\Ver_{p^n}}(V^{\otimes m},\bT_i)^{S_{m-1}}\to 
{\rm Hom}_{\Ver_{p^n}}(V^{\otimes m-2},\bT_i)^{S_{m-2}} \to 0. 
\end{equation}
}
By Theorem~4.2 of \cite{Benson/Etingof/Ostrik}, 
sequence~\eqref{e2} can be rewritten as 
{\small
\begin{equation}\label{e3}
0\to {\rm Hom}_{\mathcal{T}_{n,p}}(V^{\otimes m},T_i)^{S_{m}}\to 
{\rm Hom}_{\mathcal{T}_{n,p}}(V^{\otimes m},T_i)^{S_{m-1}}\to 
{\rm Hom}_{\mathcal{T}_{n,p}}(V^{\otimes m-2},T_i)^{S_{m-2}} \to 0. 
\end{equation} 
}
Now, if $1\le m\le p^n-2$, then by Proposition~3.5 
of~\cite{Benson/Etingof/Ostrik}, sequence \eqref{e3} can 
be rewritten as follows: 
\begin{equation}\label{e4}
0\to {\rm Hom}_{\mathcal{T}_{p}}(V^{\otimes m},T_i)^{S_{m}}\to 
{\rm Hom}_{\mathcal{T}_{p}}(V^{\otimes m},T_i)^{S_{m-1}}\to 
{\rm Hom}_{\mathcal{T}_{p}}(V^{\otimes m-2},T_i)^{S_{m-2}} \to 0, 
\end{equation} 
where $V$ now denotes the $2$-dimensional irreducible 
representation of $SL_2(\bf k)$. The Hom spaces in this 
sequence are just Homs between representations of $SL_2(\bf k)$. 
Thus sequence \eqref{e4} can be written as 
{\small
\begin{equation}\label{e5} 
0\to {\rm Hom}_{SL_2(\bf k)}(S^{m}V,T_i)\to 
{\rm Hom}_{SL_2(\bf k)}(S^{m-1}V\otimes V,T_i)\to 
{\rm Hom}_{SL_2(\bf k)}(S^{m-2}V,T_i) \to 0.
\end{equation} 
}

We will now use the following lemma.

\begin{lemma}\label{le:e1} 
For $m\le p^n-1$ one has ${\rm Ext}^1_{SL_2(\bf k)}(S^mV,T_i)=0$.
\end{lemma}

\begin{proof} 
Since $i\ge p^{n-1}-1$, it suffices to show that for any $j$, 
\[ \Ext^1_{SL_2(\bf k)}(S^mV,{\rm St}_{n-1}\otimes T_j)=0, \] 
where ${\rm St}_{n-1}:=T_{p^{n-1}-1}$ is the $(n-1)$st 
Steinberg module (note that it is self-dual). We have 
\[ \Ext^1_{SL_2(\bf k)}(S^mV,{\rm St}_{n-1}\otimes T_j)=
\Ext^1_{SL_2(\bf k)}(S^mV\otimes {\rm St}_{n-1},T_j). \]
By \cite{Benson/Etingof/Ostrik}, Lemma~3.3, 
$S^mV\otimes {\rm St}_{n-1}$ has a filtration whose 
successive quotients are tilting modules. Thus, since 
$\Ext^1(T_l,T_j)=0$, $S^mV\otimes {\rm St}_{n-1}$ is a direct sum of $T_i$, i.e., a 
tilting module. This implies the statement, using again that $\Ext^1(T_l,T_j)=0$. 
\end{proof} 

Now the exactness of \eqref{e5} follows from the fact that 
the sequence of $SL_2(\bf k)$-representations 
\[ 0\to S^{m-2}V\to S^{m-1}V\otimes V\to S^mV \to 0 \]
is exact (being a homogeneous part of the ordinary Koszul complex) 
and Lemma~\ref{le:e1}.  This completes the proof of Proposition~\ref{pr:exac}.
\end{proof} 

Let ${\rm q}=e^{\pi i/p^n}$. 

\begin{corollary}\label{cofpdim1}
\begin{enumerate}[\rm (i)]
\item For $m\le p^n-2$ we have 
\[ {\rm FPdim}(S^mV)=[m+1]_{\rm q}:=\frac{{\rm q}^{m+1}-{\rm q}^{-m-1}}{{\rm q}-{\rm q}^{-1}}\in
  \bR \]
and 
$ \dim(S^mV)=m+1\in {\bf k}$. 
\item The Jordan--H\"older multiplicities of the objects $S^mV$ are 
the decomposition numbers of tilting modules into Weyl modules computed 
in \cite{Tubbenhauer/Wedrich} (see \cite{Benson/Etingof/Ostrik}, Proposition 4.17). 
\end{enumerate}
\end{corollary}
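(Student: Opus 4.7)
The plan is to obtain both parts of the corollary by applying additive/multiplicative invariants and Jordan--H\"older class computations to the short exact sequence
\[ 0\to S^{m-2}V\to S^{m-1}V\otimes V\to S^mV\to 0, \]
whose exactness (for $1\le m\le p^n-2$) is exactly the content of Proposition~\ref{pr:exac}.

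For part (i), I would first record that both $\dim\colon K_0(\Ver_{p^n})\to{\bf k}$ and ${\rm FPdim}\colon K_0(\Ver_{p^n})\to\bR$ are ring homomorphisms, hence additive on short exact sequences and multiplicative on tensor products. Applying them to the Koszul sequence yields the two-term linear recursions
\[ \dim(S^mV)=\dim(V)\dim(S^{m-1}V)-\dim(S^{m-2}V),\]
\[{\rm FPdim}(S^mV)={\rm FPdim}(V)\,{\rm FPdim}(S^{m-1}V)-{\rm FPdim}(S^{m-2}V),\]
with the initial values $S^0V=\one$ and $S^1V=V$. Since $V$ is the image of the standard $2$-dimensional $SL_2({\bf k})$-module, $\dim(V)=2\in{\bf k}$, and the first recursion integrates to $\dim(S^mV)=m+1$. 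For the second, the generator of $\Ver_{p^n}$ satisfies ${\rm FPdim}(V)={\rm q}+{\rm q}^{-1}=2\cos(\pi/p^n)$ (as is standard for the Verlinde-type categories; this also follows from the Perron--Frobenius eigenvalue of tensoring by $V$ on the fusion subquotient). The recursion then becomes the classical Chebyshev recursion whose solution is the quantum integer $[m+1]_{\rm q}=({\rm q}^{m+1}-{\rm q}^{-m-1})/({\rm q}-{\rm q}^{-1})$.

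For part (ii), the same exact sequence gives a recursion in the Grothendieck group,
\[ [S^mV]=[S^{m-1}V]\cdot[V]-[S^{m-2}V], \]
with $[S^0V]=[\one]$ and $[S^1V]=[V]$. This is formally identical to the recursion expressing $[T_m]$ in the Grothendieck group of $SL_2({\bf k})$-tilting modules in terms of Weyl modules (the Temperley--Lieb type recursion underlying \cite{Tubbenhauer/Wedrich}). Using the identification of simples of $\Ver_{p^n}$ with the appropriate Weyl quotients, as recorded in \cite{Benson/Etingof/Ostrik}, Proposition~4.17, one matches the two recursions term by term. Since the initial data agree, induction on $m$ (bounded by $p^n-2$, which is exactly the range in which Proposition~\ref{pr:exac} applies) concludes the identification of $[S^mV\colon L]$ with the decomposition numbers.

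The main obstacle is not the algebraic recursion, which is straightforward, but making the dictionary between the abstract simple objects of $\Ver_{p^n}$ and the Weyl modules of $SL_2({\bf k})$ fully precise. Concretely, one must verify that the recursion in $K_0(\Ver_{p^n})$ truly matches the recursion from \cite{Tubbenhauer/Wedrich} after applying the identification of \cite{Benson/Etingof/Ostrik}, Proposition~4.17; once this dictionary is in hand, both (i) and (ii) become essentially formal consequences of the exactness of the Koszul complex, noting that $[m+1]_{\rm q}>0$ in the required range so that no cancellation or sign issue arises when reading off the Perron--Frobenius dimension.
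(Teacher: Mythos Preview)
Your argument for part~(i) is essentially identical to the paper's: apply ${\rm FPdim}$ and $\dim$ to the short exact sequence from Proposition~\ref{pr:exac}, obtain the two-term recursion, and solve by induction from $S^0V=\one$, $S^1V=V$.

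For part~(ii) the paper proceeds differently and more tersely: it deduces~(ii) directly from~(i) by invoking \cite{Benson/Etingof/Ostrik}, Theorem~4.5(iv) and Propositions~4.12 and~4.16. These results say, roughly, that the Frobenius--Perron dimensions of the simple objects are linearly independent algebraic numbers, so the value ${\rm FPdim}(S^mV)=[m+1]_{\rm q}$ already determines the class $[S^mV]$ in $K_0$, and the cited propositions then identify those coefficients with the tilting-into-Weyl decomposition numbers. Your route---running the recursion directly in $K_0(\Ver_{p^n})$---is perfectly valid and arguably more transparent, but one point deserves correction: the recursion $[S^mV]=[V][S^{m-1}V]-[S^{m-2}V]$ is the \emph{Weyl module} recursion (it shows $[S^mV]$ is the image of $[\Delta_m]$), not a recursion for tilting modules $[T_m]$. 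The passage from this to the tilting decomposition numbers is not a matter of matching identical recursions; it requires precisely the structural input you flag as ``the main obstacle,'' and that input is what the paper supplies by citing the specific results from \cite{Benson/Etingof/Ostrik}.
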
 

\begin{proof} (i) It follows from Proposition \ref{pr:exac} 
that 
\begin{align*}
{\rm FPdim}(S^mV)&=({\rm q}+{\rm q}^{-1}){\rm FPdim}(S^{m-1}V)-
{\rm FPdim}(S^{m-2}V), \\
\dim(S^mV)&=2\dim(S^{m-1}V)-\dim(S^{m-2}V).
\end{align*}
Thus the statement follows by induction, using that $S^0V=\one,\ S^1V=V$. 

(ii) This follows from (i), using \cite{Benson/Etingof/Ostrik}
Theorem 4.5(iv) and 
Propositions~4.12, 4.16.
\end{proof} 

Recall (\cite{Benson/Etingof/Ostrik}) that 
$\Ver_{p^n}$ has exactly two invertible objects up to isomorphism 
for $p>2$ and exactly one (the unit) for $p=2$. 
For $p>2$ let $\psi$ be the unique non-trivial invertible object 
of $\Ver_{p^n}$ (generating the category of supervector spaces). 
If $p=2$, we agree that $\psi=\one$. 

\begin{corollary}\label{cofpdim2}
\begin{enumerate}[\rm (i)]
\item $S^{p^n-2}V=\psi$.
\item $S^{p^n-3}V=V\otimes \psi$.
\item $S^jV=0$ for all $j>p^n-2$.  
\end{enumerate}
\end{corollary}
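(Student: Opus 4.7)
The plan is to prove the three parts in the order (i), (iii), (ii), each time reducing to a Frobenius--Perron dimension count using Corollary \ref{cofpdim1}.

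For (i), the trigonometric identity $\sin((p^n-1)\pi/p^n)=\sin(\pi/p^n)$ gives $[p^n-1]_{\rm q}=1$, so Corollary \ref{cofpdim1}(i) yields ${\rm FPdim}(S^{p^n-2}V)=1$; this makes $S^{p^n-2}V$ invertible. For $p=2$ the unique invertible object is $\one=\psi$, so the claim is immediate. For $p$ odd, the two invertibles $\one,\psi$ are distinguished by their categorical $\bk$-dimensions $+1$ and $-1$, and Corollary \ref{cofpdim1}(i) gives $\dim(S^{p^n-2}V)=p^n-1\equiv -1\pmod p$, identifying $S^{p^n-2}V\cong \psi$.

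For (iii), it suffices to show $S^{p^n-1}V=0$: the surjective multiplication $V\otimes S^{j-1}V\twoheadrightarrow S^jV$ will then propagate the vanishing to all $j\ge p^n-1$ by induction. Over $SL_2(\bk)$, the Weyl module $V(p^n-1)$ is the $n$-th Steinberg module, which is simultaneously irreducible and tilting; hence $S^{p^n-1}V\cong T_{p^n-1}$ in $\mathcal T_p$. But $T_{p^n-1}$ generates the tensor ideal $\mathcal I_n$ by definition, so its image in $\mathcal T_{n,p}=\mathcal T_p/\mathcal I_n$ is zero. Since the symmetric monoidal functor $\mathcal T_{n,p}\to\Ver_{p^n}$ preserves symmetric powers, $S^{p^n-1}V=0$ in $\Ver_{p^n}$.

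For (ii), the surjective multiplication $V\otimes S^{p^n-3}V\twoheadrightarrow S^{p^n-2}V=\psi$ is nonzero, so its adjoint under the self-duality $V^*\cong V$ is a nonzero morphism $\phi\colon S^{p^n-3}V\to V\otimes\psi$. By Corollary \ref{cofpdim1}(ii), the Jordan--H\"older multiplicities of $S^1V=V$ are the decomposition numbers of the tilting module $T_1=V(1)$, which has a single Weyl factor; thus $V$ is simple in $\Ver_{p^n}$, and therefore so is $V\otimes\psi$. It follows that $\phi$ is surjective. By Corollary \ref{cofpdim1}(i) together with $\sin((p^n-2)\pi/p^n)=\sin(2\pi/p^n)$, we have ${\rm FPdim}(S^{p^n-3}V)=[p^n-2]_{\rm q}=[2]_{\rm q}={\rm FPdim}(V\otimes\psi)$, so $\ker\phi$ has Frobenius--Perron dimension $0$ and must vanish. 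Hence $\phi$ is an isomorphism. The subtlest step is the identification $S^{p^n-1}V\cong T_{p^n-1}$ in $\mathcal T_p$ and the assertion that the categorical symmetric power is preserved along $\mathcal T_{n,p}\to \Ver_{p^n}$; both are standard, but they are the structural inputs on which the entire argument rests.
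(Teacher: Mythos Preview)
Your arguments for (i) and (ii) are correct and close in spirit to the paper's, with minor variations: for (i) the paper distinguishes $\psi$ from $\one$ via the parity decomposition $\Ver_{p^n}=\Ver_{p^n}^+\oplus\Ver_{p^n}^-$ (noting $p^n-2$ is odd) rather than via the categorical dimension, and for (ii) the paper first argues that ${\rm FPdim}(S^{p^n-3}V)={\rm q}+{\rm q}^{-1}<2$ forces simplicity and then identifies the simple, whereas you build the map to $V\otimes\psi$ directly and compare FP-dimensions. Both routes are fine.

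Part (iii) is where your proof diverges substantially from the paper's, and here there is a real gap. You assert that the symmetric monoidal functor $\mathcal T_{n,p}\to\Ver_{p^n}$ ``preserves symmetric powers''; but $S^mV$ is a \emph{cokernel} of the map $\bigoplus_i(s_i-1)\colon \bigoplus V^{\otimes m}\to V^{\otimes m}$, and neither the quotient functor $\mathcal T_p\to\mathcal T_{n,p}$ nor the embedding $\mathcal T_{n,p}\hookrightarrow\Ver_{p^n}$ is known to preserve cokernels. Concretely: in ${\rm Rep}\,SL_2(\bk)$ one has $V^{\otimes(p^n-1)}=T_{p^n-1}\oplus K$ with $K_{S_{p^n-1}}=0$, and after passing to $\Ver_{p^n}$ the summand $T_{p^n-1}$ dies, so $V^{\otimes(p^n-1)}$ becomes $K$; but to conclude $S^{p^n-1}V=(K)_{S_{p^n-1}}=0$ in $\Ver_{p^n}$ you would need the surjection $\bigoplus K\to K$ (sum of $s_i-1$) to remain surjective after the functor, e.g.\ to be split in $\mathcal T_p$. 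That is not automatic for surjections between tilting modules, and the paper's Proposition~3.5 of \cite{Benson/Etingof/Ostrik} (used to compare Hom spaces in $\mathcal T_p$ and $\mathcal T_{n,p}$) is invoked precisely only for $m\le p^n-2$. So calling this step ``standard'' overstates matters.

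The paper avoids this issue entirely by arguing internally to $\Ver_{p^n}$: it proves (ii) \emph{first}, then observes that the Koszul differential $S^{p^n-3}V\to S^{p^n-2}V\otimes V$ is a nonzero map between isomorphic simple objects, hence an isomorphism; since $K_{p^n-1}^\bullet$ is a complex, the next map $S^{p^n-2}V\otimes V\to S^{p^n-1}V$ must vanish, yet it is the surjective multiplication map, so $S^{p^n-1}V=0$. This uses only Proposition~\ref{pr:exac} (valid for $m\le p^n-2$) and the already-established parts (i), (ii), and requires no comparison of cokernels across functors.
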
 

\begin{proof} 
(i) By Corollary \ref{cofpdim1}, we have ${\rm FPdim}(S^{p^n-2}V)=1$, 
which implies that $S^{p^n-2}V$ is invertible (see \cite{EGNO}, Ex. 4.5.9). For $p=2$ this implies that 
$S^{2^n-2}V=\one$, and for $p>2$ that 
$S^{p^n-2}V=\psi$ (as $S^{p^n-2}V\in \Ver_{p^n}^-$ since $p^n-2$ is odd). 

(ii) Similarly, by Corollary \ref{cofpdim1},
${\rm FPdim}(S^{p^n-3}V)={\rm q}+{\rm q}^{-1}<2$, so $S^{p^n-3}V$ is simple. But by the results of \cite{Benson/Etingof/Ostrik}, the only object $X\in \Ver_{p^n}$ of Frobenius-Perron dimension ${\rm q}+{\rm q}^{-1}$ such that $\psi$ is a quotient of $X\otimes V$ is $X\cong V\otimes \psi$. Thus $S^{p^n-3}V\cong V\otimes \psi$. 

(iii) The map $S^{p^n-3}V\to S^{p^n-2}V\otimes V$ 
corresponds to the surjective map 
$S^{p^n-3}V\otimes V\to S^{p^n-2}V$, which is nonzero by (i). 
Hence by (ii) it is an isomorphism. Thus the morphism 
$S^{p^n-2}V\otimes V\to S^{p^n-1}V$ must be $0$ 
(as $K_{p^n-1}^\bullet$ is a complex). But this map is surjective, 
so $S^{p^n-1}V=0$. This implies the statement. 
\end{proof} 

\begin{remark} In particular, this implies that 
\[ \sum_{m=0}^\infty \dim(S^mV) z^m=(1-z)^{p^n-2}\in \bk[[z]]. \]
Also we clearly have 
$$
\sum_{m=0}^\infty \dim(\Lambda^mV) z^m=1+2z+z^2=(1+z)^2\in \bk[[z]].
$$ 
Thus the $p$-adic dimensions of $V$ defined 
in~\cite{Etingof/Harman/Ostrik:2018a} are as follows: 
\[ {\rm Dim}_-(V)=2\in \bZ_p,\ {\rm Dim}_+(V)=2-p^n\in \bZ_p.
\] 
Similarly, we get 
\begin{equation}\label{dimsv}
\sum_{m=0}^\infty{\rm FPdim}(S^mV)z^m=\frac{1+z^{p^n}}{(1-{\rm q}z)(1-{\rm q}^{-1}z)}.
\end{equation}
\end{remark} 

We also obtain 

\begin{corollary}\label{co:koszul}
\begin{enumerate}[\rm (i)]
\item The Koszul complex $K^\bullet$ 
is exact in all degrees except $0$ and $p^n-2$. Moreover 
$H^0(K^\bullet)=\one$ sitting in total degree $0$ and 
$H^{p^n-2}(K^\bullet)=\psi$ sitting in total degree $p^n$. 

\item The algebra $SV$ is $(p^n-2,2)$-Koszul and 
the algebra $\Lambda V$ is $(2,p^n-2)$-Koszul in the sense 
of Brenner, Butler and King~\cite{Brenner/Butler/King:2002a} 
(see~\cite{Etingof:2018a}, Definition 5.3). 
\end{enumerate}
\end{corollary}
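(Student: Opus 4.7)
The plan is to exploit the total-degree decomposition $K^\bullet=\bigoplus_{m\ge 0}K_m^\bullet$ and analyse each $K_m^\bullet$ case by case, using Proposition~\ref{pr:exac} for the middle range and Corollary~\ref{cofpdim2} for the boundary. For $m=0$ the complex reduces to $\one$ in cohomological degree $0$, contributing the required copy of $\one$ to $H^0(K^\bullet)$ in total degree $0$. For $1\le m\le p^n-2$, exactness is exactly what Proposition~\ref{pr:exac} gives. The three remaining cases all follow from Corollary~\ref{cofpdim2}: when $m=p^n-1$ the complex becomes $0\to V\otimes\psi\to \psi\otimes V\to 0\to 0$, and the middle arrow is the isomorphism already exhibited in the proof of Corollary~\ref{cofpdim2}(iii), so this piece is acyclic; when $m=p^n$ the two right-hand terms vanish and only one copy of $\psi$ survives, sitting in cohomological degree $p^n-2$ and total degree $p^n$; for $m>p^n$ every position vanishes because $S^jV=0$ for $j\ge p^n-1$. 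Assembling these contributions gives exactly the cohomology claimed in (i).

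For part (ii) I will translate the cohomology computation of (i) into the Brenner-Butler-King language, via the version of the definition recalled as Definition~5.3 of~\cite{Etingof:2018a}. Viewing the augmented Koszul complex as the initial portion of a minimal free resolution of $\one$ over $SV$, the exactness of $K_m^\bullet$ for $1\le m\le p^n-2$ says precisely that this resolution is linear through $p^n-2$ steps (the $i$-th generators appearing in internal degree $i$), while the nonvanishing class in $H^{p^n-2}(K^\bullet)$, sitting in total degree $p^n$ rather than $p^n-1$, forces the next minimal generator to appear two internal degrees beyond the linear expectation. This is precisely the $(p^n-2,2)$-Koszul condition. The corresponding assertion for $\Lambda V$ with parameters $(2,p^n-2)$ then follows from the symmetric role played by $SV$ and $\Lambda V$ in the Koszul complex (as used, for example, in the Koszul-duality discussion of~\cite{Etingof:2018a}).

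The main obstacle is almost entirely conceptual rather than computational. The heavy lifting for (i) has already been accomplished by Proposition~\ref{pr:exac}, and the boundary cases reduce to direct inspection via Corollary~\ref{cofpdim2}, so (i) should be essentially immediate once the total-degree decomposition is in place. The only genuine subtlety lies in part (ii): one must be careful to unpack the Brenner-Butler-King definition of $(a,b)$-Koszulness correctly in the symmetric tensor category setting, rather than over vector spaces, and to verify that the \emph{linear-then-one-jump} pattern produced by (i) matches this definition precisely. Once this bookkeeping is in place, no further computation is required.
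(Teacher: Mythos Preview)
Your argument is correct and matches the paper's approach. The paper does not write out an explicit proof of this corollary at all---it introduces it with ``We also obtain''---so you have in fact supplied more detail than the paper does, correctly handling the boundary cases $m=0$, $m=p^n-1$, $m=p^n$, and $m>p^n$ via Corollary~\ref{cofpdim2} and invoking Proposition~\ref{pr:exac} for the middle range; your reading of part~(ii) as a direct translation of~(i) into the Brenner--Butler--King framework is likewise what the paper intends.
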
 

\begin{corollary}\label{cofrob} 
The algebra $SV$ in $\Ver_{p^n}$ is Frobenius. 
\end{corollary}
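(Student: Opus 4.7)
The plan is to exhibit the desired Frobenius form as the projection onto the top graded piece. By Corollary~\ref{cofpdim2}(iii), $S^jV = 0$ for $j > p^n-2$, so we have a well-defined morphism $\lambda: SV \to \psi$ given by projection onto $S^{p^n-2}V \cong \psi$. Define the bilinear form $B := \lambda \circ m : SV \otimes SV \to \psi$ where $m$ is the multiplication. Frobenius-ness of $SV$ is equivalent to non-degeneracy of $B$, which decomposes by grading into the claim that for each $0 \le i \le p^n-2$ the multiplication
\[ m_i : S^iV \otimes S^{p^n-2-i}V \to S^{p^n-2}V = \psi \]
induces an isomorphism $\phi_i : S^iV \to (S^{p^n-2-i}V)^* \otimes \psi$.

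The first step is to check that source and target of $\phi_i$ have equal Frobenius--Perron dimension. Using ${\rm q}^{p^n} = -1$, a direct computation yields $[i+1]_{\rm q} = [p^n-1-i]_{\rm q}$, so Corollary~\ref{cofpdim1}(i) gives $\mathrm{FPdim}(S^iV) = \mathrm{FPdim}((S^{p^n-2-i}V)^* \otimes \psi)$. In a finite tensor category a morphism between objects of equal positive FPdim is an isomorphism iff it is a monomorphism, so it suffices to show each $\phi_i$ is injective.

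For the inductive core I would assemble a commutative ladder out of two instances of Proposition~\ref{pr:exac}: the short exact sequence for $m = i+1$,
\[ 0 \to S^{i-1}V \to S^iV \otimes V \to S^{i+1}V \to 0, \]
and the one for $m = p^n-1-i$, which after dualizing and tensoring with $\psi$ (and identifying $V^* \cong V$ via the perfect pairing $V \otimes V \to \Lambda^2 V = \one$) becomes
\[ 0 \to (S^{p^n-1-i}V)^* \otimes \psi \to V \otimes (S^{p^n-2-i}V)^* \otimes \psi \to (S^{p^n-3-i}V)^* \otimes \psi \to 0. \]
The vertical arrows $\phi_{i+1}$, $\phi_i \otimes \mathrm{id}_V$, $\phi_{i-1}$ fit into the resulting ladder. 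The base cases $\phi_0 = \mathrm{id}_\one$, $\phi_{p^n-2} = \mathrm{id}_\psi$ are tautologies, and $\phi_{p^n-3}$ is the canonical self-duality of $V$ twisted by $\psi$ (using $S^{p^n-3}V \cong V \otimes \psi$ from Corollary~\ref{cofpdim2}(ii)); these anchor a two-sided induction on $i$. At each step the five-lemma promotes the outer columns being iso to $\phi_i \otimes \mathrm{id}_V$ being iso, and since $V \ne 0$ in the finite tensor category $\Ver_{p^n}$, tensoring with $V$ reflects the iso property (again by matching FPdims), so $\phi_i$ itself is an iso.

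The main obstacle will be verifying the commutativity of this ladder. The commutativity reduces to the compatibility of the Koszul differential $d : S^{m-1}V \otimes V \to S^mV$ (which is just multiplication in $SV$) with the Frobenius pairings built from $m$, and ultimately to the associativity of $SV$ together with the compatibility of the evaluation $V \otimes V \to \one$ coming from $\Lambda^2 V = \one$ with the symmetric algebra structure. Once this compatibility is in place, the ladder diagram chase proceeds as sketched and produces the Frobenius structure on $SV$.
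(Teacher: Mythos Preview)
Your strategy---reducing the Frobenius property to nondegeneracy of the top-piece pairing and attacking the maps $\phi_i$ via the Koszul short exact sequences---is the right circle of ideas, and the FPdim computation is correct. But the induction scheme you describe does not propagate. You invoke the short five-lemma in the direction ``$\phi_{i-1}$ and $\phi_{i+1}$ iso $\Rightarrow$ $\phi_i\otimes\mathrm{id}_V$ iso'', which only fills in a middle term from its two neighbours; from the base cases $\phi_0,\phi_{p^n-3},\phi_{p^n-2}$ you cannot reach any further $\phi_i$ this way (for instance $\phi_{p^n-4}$ would require knowing $\phi_{p^n-5}$). What you actually need is the snake-lemma direction of the same ladder: from $\phi_i$ and $\phi_{i+1}$ iso deduce $\phi_{i-1}$ iso, which gives a valid downward induction from $\phi_{p^n-2},\phi_{p^n-3}$. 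You also leave the commutativity of the ladder unverified, and this is where most of the work in your approach would sit.

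The paper's proof is a streamlined version of exactly this downward step, carried out without ever writing the ladder. One takes $k$ maximal with $N:=\ker\phi_k\ne 0$. By associativity of $SV$, the image of the multiplication $N\otimes V\to S^{k+1}V$ lies in the left kernel at degree $k+1$, which vanishes by maximality of $k$; hence this map is zero. Passing to the adjoint via $V^*\cong V$ (from $\Lambda^2V=\one$), the composite $N\hookrightarrow S^kV\to S^{k+1}V\otimes V$ is zero. But the second arrow is injective by the Koszul exactness of Proposition~\ref{pr:exac}, forcing $N=0$, a contradiction. This argument needs only one Koszul injection per step and sidesteps both the FPdim bookkeeping and the ladder-commutativity check you identified as the main obstacle.
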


\begin{proof} 
Assume the contrary, and let $k$ be the largest integer 
such that the left kernel of the pairing 
$S^kV\otimes S^{p^n-2-k}V\to S^{p^n-2}V=\psi$ is nonzero. 
Denote this kernel by $N$. 
Then the composite map 
$
N\otimes V\to S^kV\otimes V\to S^{k+1}V
$ 
is zero. 
Thus the composite map 
$
N\to S^kV\to S^{k+1}V\otimes V
$ 
is zero. 
But by Proposition~\ref{pr:exac}, the map $S^kV\to S^{k+1}V\otimes V$ 
is injective. Thus $N=0$, a contradiction. 
\end{proof} 

\begin{remark} 
Recall (\cite{Benson/Etingof/Ostrik}, Subsection~4.4)
that the category $\Ver_{p^n}=\Ver_{p^n}(\bf k)$ lifts 
to a semisimple braided (non-symmetric) category $\Ver_{p^n}({\bf K})$ over 
a field ${\bf K}$ of characteristic zero, 
corresponding to the quantum group $SL_2^{-{\rm q}}$ where 
${\rm q}$ is a primitive root of unity of order $2p^n$ in ${\bf K}$. 
In $\Ver_{p^n}({\bf K})$ we have the quantum symmetric algebra 
$S_{-\rm q}V$, which is a lift of 
$SV$ over ${\bf K}$ and is also Frobenius $(p^n-2,2)$-almost 
Koszul (see \cite{Etingof:2018a}, Subsection 5.5). In particular, we have 
the quantum Koszul complex $S^\bullet_{-\rm q}V\otimes \Lambda_{-\rm q}V$
in $\Ver_{p^n}({\bf K)}$ which is a flat deformation of the Koszul complex 
$S^\bullet V\otimes \Lambda V$ and has the cohomology as described in 
Corollary~\ref{co:koszul}. 
\end{remark} 

Corollary~\ref{co:koszul} allows us to construct an injective resolution $Q_\bullet$:
\[ Q_0\to Q_1\to Q_2\to\cdots \]
 of the augmentation $\Lambda V$-module $\one$ by free 
$\Lambda V$-modules, which is periodic with period $2^n-1$ 
for $p=2$ and antiperiodic with period $p^n-1$ for $p>2$ 
(where antiperiodic means that it multiplies by $\psi$ when 
shifted by this period; in particular, it is $2(p^n-1)$-periodic). 
Namely, for $0\le i\le p^n-2$ we have 
$Q_{2r(p^n-1)+m}=S^mV\otimes \Lambda V$, and 
$Q_{(2r+1)(p^n-1)+m}=S^mV\otimes \psi\otimes \Lambda V$. 

\begin{remark} If $p^n=2$ (i.e., $p=2$, $n=1$) then $V=0$, so the Koszul complex reduces to $\one$ sitting in degree $0$ and hence does not fit the above general pattern; but we will not consider this trivial case. If $p^n=3$ (i.e., $p=3,n=1$) then $V=\psi$, so $\Lambda^3\psi\ne 0$ and hence the Koszul complex $S^\bullet V\otimes \Lambda V$ still does not fit the general pattern (in fact, in this case $\Ver_{p^n}={\rm Supervec}$, so the Koszul complex is exact except in degree $0$).  However, now this can be remedied by a slight modification of the definition. Namely, let \
$\Lambda_{\rm tr} V$ be the quotient of $\Lambda V$ by $\Lambda^3 V$ (forcing the desired equality $\Lambda^3 V=0$). Then we have the truncated Koszul complex 
$K_{\rm tr}^\bullet:=S^\bullet V\otimes \Lambda_{\rm tr}V$ which is easily shown to have the same properties as the usual Koszul complex $K^\bullet$ for $p^n>3$. Thus if $p^n=3$ then, abusing terminology and notation, by $\Lambda V$ we will mean $\Lambda_{\rm tr}V$, and by the Koszul complex the truncated Koszul complex; then the above results will also apply to this case. 
\end{remark} 

As an application let us compute the multiplicities of the unit object in the symmetric powers of $V$ for $p=2$. 

\begin{proposition}\label{pr:multi2} 
If $p=2$ then $[S^mV:\one]=0$ if $m$ is odd and 
$[S^mV:\one]=1$ if $m$ is even. Thus $[SV:\one]=2^{n-1}$. 
\end{proposition}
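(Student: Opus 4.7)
The plan is to argue by induction on $m$ using the Koszul short exact sequence $0 \to S^{m-2}V \to S^{m-1}V \otimes V \to S^mV \to 0$ from Proposition~\ref{pr:exac} and additivity of Jordan--H\"older multiplicities. Writing $c_m := [S^mV : \one]$, this gives the recursion $c_m + c_{m-2} = [\bT_{m-1}\otimes\bT_1 : \one]$ for $1 \le m \le 2^n-2$. The base cases are $c_0 = 1$ (trivially) and $c_1 = [V:\one] = 0$ (since $V = \bT_1$ is the image of the simple module $L(1)$ and is not isomorphic to $\one$); Corollary~\ref{cofpdim2} supplies the top-end values $c_{2^n-2} = 1$ and $c_{2^n-3} = 0$; and the Frobenius pairing of Corollary~\ref{cofrob} yields the palindromic symmetry $c_m = c_{2^n-2-m}$ via $(S^mV)^* \cong S^{2^n-2-m}V$.

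To evaluate $[\bT_{m-1}\otimes\bT_1 : \one]$ independently, I would use the Clebsch--Gordan-type decomposition of $T(m-1)\otimes T(1)$ into tilting summands in the $SL_2$ tilting category $\mathcal{T}_p$ in characteristic two, descended to $\mathcal{T}_{n,p}$ by killing tilting modules of highest weight $\ge 2^n-1$. This expresses $\bT_{m-1}\otimes\bT_1 = \bigoplus_k a_k \bT_k$ for explicit multiplicities $a_k$. By the identification of Corollary~\ref{cofpdim1}(ii) of the Jordan--H\"older content of $\bT_k$ with decomposition numbers of tilting modules into Weyl modules for $SL_2$, together with the observation that the only simple composition factor $L(i)$ of $T(k)$ whose image in $\Ver_{2^n}$ is $\one$ is $L(0)$ itself, the multiplicity $[\bT_k : \one]$ reduces to the classical $SL_2$ datum $[T(k) : L(0)]$, which is a well-understood function of $k$ in characteristic two (taking value $1$ at $k=0$, value $2$ at even $k\ge 2$, and $0$ at odd $k$, in the relevant range). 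Substituting back closes the recursion.

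The hard part is handling the tilting decomposition $T(m-1)\otimes T(1)$ carefully near the Steinberg-like weights $m-1 = 2^j - 1$, where additional direct summands $T(k)$ with $k < m-2$ appear beyond the naive $T(m) \oplus T(m-2)$; with this in hand, the recursion forces $c_m = 1$ for $m$ even and $c_m = 0$ for $m$ odd. Summing over the $2^{n-1}$ even values of $m$ in $\{0,2,\ldots,2^n-2\}$ then yields $[SV:\one] = 2^{n-1}$.
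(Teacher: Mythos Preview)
Your argument rests on identifying $S^{m-1}V$ with $\bT_{m-1}$, but these are different objects of $\Ver_{2^n}$ once $m-1\ge 2$. By definition $\bT_i$ is the image of the tilting module $T_i$, whereas $S^iV$ is the $i$th symmetric power of the tautological object, i.e.\ (the image of) a Weyl-type module, not a tilting module. Concretely, in $\Ver_{2^3}$ the paper computes $S^2V=[\one,X_1]$, a uniserial object of length two, while $\bT_2=V\otimes V$ sits in the sequence $0\to\Lambda^2V\to V\otimes V\to S^2V\to 0$ and hence has composition factors $\one,\one,X_1$. So your Clebsch--Gordan decomposition of $T(m-1)\otimes T(1)$ does not compute the middle term of the Koszul sequence. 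Worse, even with the correct middle term the recursion is empty: exactness gives $[S^{m-1}V\otimes V]=[S^mV]+[S^{m-2}V]$ in the Grothendieck group, so $[S^{m-1}V\otimes V:\one]=c_m+c_{m-2}$ tautologically, and no new information is gained. To extract anything you would need $[S^{m-1}V:L]$ for every simple $L$ adjacent to $\one$ in the fusion graph of $V$, which is strictly more data than the sequence $(c_m)$ itself.

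The paper avoids any recursion. It uses that for $p=2$ every nontrivial simple of $\Ver_{2^n}$ has Frobenius--Perron dimension of trace zero over $\bQ$, so that $[X:\one]=2^{1-n}\,\mathrm{Tr}_{\bQ(\mathrm q+\mathrm q^{-1})/\bQ}\bigl(\mathrm{FPdim}(X)\bigr)$; the generating function $\sum_m[S^mV:\one]\,z^m$ is then read off from the closed form~\eqref{dimsv} for $\sum_m\mathrm{FPdim}(S^mV)\,z^m$ via an elementary trace identity. The Remark following the proof records a second route: Corollary~\ref{cofpdim1}(ii) identifies $[S^mV:\one]$ with a specific entry of the decomposition matrix of $\Ver_{2^n}$, already computed in~\cite{Benson/Etingof/Ostrik}.
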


\begin{proof} 
Note that for $X\in \Ver_{2^{n}}$ the multiplicity 
$[X:\one]$ of $\one$ in $X$ is ${\rm Tr}({\rm FPdim}(X))/2^{n-1}$
(the trace of the algebraic number in the field $\bQ({\rm q}+{\rm q}^{-1}))$ where ${\rm  q}:=e^{\pi i/2^{n}}$; this follows since by 
\cite{Benson/Etingof:2019a}, 
${\rm Tr}{\rm FPdim}(X)=0$ for any nontrivial simple $X\in \Ver_{2^{n+1}}$. So 
we have 
$$
\sum_m [S^mV:\one]z^m=\frac{1}{2^{n-1}}{\rm Tr}\left(\frac{1+z^{2^n}}{(1-{\rm q}z)(1-{\rm q}^{-1}z)}\right).
$$
Thus the result follows from the following lemma. 

\begin{lemma} 
$\displaystyle
\frac{1}{2^{n-1}}{\rm Tr}\left(\frac{1+z^{2^n}}{(1-{\rm q}z)(1-{\rm q}^{-1}z)}\right)=\frac{1-z^{2^n}}{1-z^2}=\sum_{j=0}^{2^{n-1}-1}z^{2j}.
$
\end{lemma} 

\begin{proof} We have 
$$
\frac{1}{2^{n-1}}{\rm Tr}\left(\frac{1+z^{2^n}}{(1-{\rm q}z)(1-{\rm q}^{-1}z)}\right)=
\frac{1}{2^{n-1}}\sum_{k=1}^{2^{n-1}}\frac{1+z^{2^n}}{(1-{\rm q}^{2k-1}z)(1-{\rm q}^{-2k+1}z)}.
$$
This is the unique polynomial $h(z)\in \bQ[z]$ 
of degree $2^n-2$ such that $h({\rm q}^j)=\frac{2}{1-{\rm q}^{2j}}$
for any odd number $j$. But the polynomial $\frac{1-z^{2^n}}{1-z^2}$ 
satisfies these conditions, hence the result. 
\end{proof} 
 
This completes the proof of Proposition~\ref{pr:multi2}.
\end{proof} 

\begin{remark} Another proof of Proposition \ref{pr:multi2} is obtained by applying 
Proposition 4.16 and Theorem 4.42 of \cite{Benson/Etingof/Ostrik}. Namely, 
$[S^iV:\one]$ is an entry of the decomposition matrix of $\Ver_{2^n}$, so it is $0$ 
if $i$ is odd and $1$ if $i$ is even. This follows since the descendants 
of the number $2^n-1$ are exactly all the odd numbers between $1$ and $2^n-1$.  
\end{remark} 

\section{\texorpdfstring
{Ext computations}
{Ext computations}}

\subsection{\texorpdfstring
{Ext computations for $p=2$}
{Ext computations for  𝑝 = 2}}
Consider now the case $p=2$. In this case, we can use the 
resolution $Q_\bullet$ to give the following recursive procedure 
of computation of the additive structure of the cohomology 
$\Ext^\bullet_{\Ver_{2^{n+1}}}(\one,X)$ (for indecomposable $X$). 

We will denote the generating 
object of $\Ver_{2^{k+1}}$ by $X_k$ and recall that 
$\Ver_{2^{k+2}}^+$ is the category of $\Lambda X_k$-modules in 
$\Ver_{2^{k+1}}$. Also the resolution $Q_\bullet$ in 
$\Ver_{2^{k+1}}$ will be denoted by $S^\bullet X_{k}[y_{k+1}]\otimes \Lambda X_k$, 
where $y_{k+1}$ is a variable of degree $2^{k+1}-1$ for $k\ge 0$. This is justified by this 
resolution being periodic with period $2^{k+1}-1$. Also if $Y^\bullet,Z^\bullet$ are complexes in an abelian category $\mathcal A$ then by $\Ext^m(Z^\bullet,Y^\bullet)$ we will mean ${\rm Hom}(Z^\bullet,Y^\bullet[m])={\rm Hom}(Z^\bullet,Y^{\bullet+m})$ with Hom taken in the derived category $D(\mathcal{A})$. 

Recall that $\Ver_{2^{n+1}}=\Ver_{2^{n+1}}^+\oplus \Ver_{2^{n+1}}^-$. If $X\in \Ver_{2^{n+1}}^-$, then
$\Ext^\bullet_{\Ver_{2^{n+1}}}(\one,X)$ is zero. Thus, it suffices to compute 
$\Ext^\bullet_{\Ver_{2^{n+1}}^+}(\one,X)$ for $X\in \Ver_{2^{n+1}}^+$. 
In that case, we have 
\begin{align}\label{recu}
\Ext^\bullet_{\Ver_{2^{n+1}}^+}(\one,X)&\cong \Ext^\bullet_{\Lambda X_{n-1}}(\one,X)\cong \Ext^\bullet_{\Lambda X_{n-1}}(\one,Q_\bullet\otimes X)\\ \nonumber
&\cong\Ext^\bullet_{\Lambda X_{n-1}}(\one,S^\bullet X_{n-1}[y_n]
\otimes\Lambda X_{n-1}\otimes X) \\ \nonumber
&\cong\Ext^\bullet_{\Ver_{2^n}}(\one,S^\bullet X_{n-1}[y_n]
\otimes X) \\ \nonumber
&\cong\Ext^\bullet_{\Ver_{2^n}^+}(\one,(S^\bullet X_{n-1}[y_n]\otimes X)^+)
\end{align}
where the superscript $+$ means that we are taking the part 
lying in $\Ver_{2^n}^+$, and in the last two expressions 
$X$ is regarded as an object of $\Ver_{2^n}$ 
using the corresponding forgetful functor $\Ver_{2^{n+1}}^+\to \Ver_{2^n}$.  
Here for the penultimate isomorphism we invoked the Shapiro lemma, using that 
the $\Lambda X_{n-1}$-module $S^k X_{n-1}[y_n] \otimes\Lambda X_{n-1}\otimes X$ is free and therefore coinduced (as $\Lambda X_{n-1}$ is a Frobenius algebra). 

Thus we get a recursion expressing of $\Ext^\bullet_{\Ver_{2^{n+1}}^+}(\one,X)$ in terms of $\Ext^\bullet_{\Ver_{2^{n}}^+}(\one,X')$. While this is a good news, unfortunately $X'$ 
is not an object any more but rather a complex of objects finite in the negative direction. Luckily, the same calculation applies if $X$ is such a complex, i.e., an object of the derived category $D^+(\Ver_{2^{n+1}}^+)$ of $\Ver_{2^{n+1}}^+$, which allows us to iterate this construction. Namely, for an object $X\in D^+(\Ver_{2^{n+1}}^+)$, let \[ E_n(X):=\underline{{\rm Hom}}_{\Lambda X_{n-1}}
(\one,S^\bullet X_{n-1}[y_n]\otimes \Lambda X_{n-1}\otimes X)^+
=(S^\bullet X_{n-1}[y_n]\otimes X)^+ \]
(the internal Hom taken in the category $\Ver_{2^n}$). 
This gives an additive functor 
$$
E_n\colon D^+(\Ver_{2^{n+1}}^+)\to 
D^+(\Ver_{2^n}^+).
$$ 

\begin{lemma}\label{dif0}
If $X\in \mathcal \Ver_{2^n}^+$ (i.e., a trivial $\Lambda X_{n-1}$-module) 
then the differential in the complex $E_n(X)$ is zero. 
\end{lemma}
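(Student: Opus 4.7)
The plan is to unfold the differential on $E_n(X)$ in terms of the Koszul differential on $Q_\bullet$ together with the $\Lambda V$-action on $X$ (writing $V:=X_{n-1}$), and then observe that triviality of the action kills the only potentially nonzero contribution.

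First, I would describe the Koszul differential concretely. The map $d_K\colon S^kV\otimes\Lambda V\to S^{k+1}V\otimes\Lambda V$ is $\Lambda V$-linear for the right-factor action, so it is determined by its value on generators $\sigma\otimes 1\in S^kV\otimes\Lambda^0V$. On those, $d_K$ is the composite of the coevaluation $\one\to V\otimes V$ (coming from the self-duality $V\cong V^*$ supplied by $\Lambda^2V=\one$) with multiplication into $S^{k+1}V$ on the left factor and into $\Lambda^1V$ on the right factor. Extending $\Lambda V$-linearly, $d_K(\sigma\otimes\omega)$ is a sum of terms of the form $v_\alpha\sigma\otimes v_\beta\omega$, so its $\Lambda V$-slot lies in $\Lambda^{i+1}V$ whenever $\omega\in\Lambda^iV$.

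Second, I would identify the invariants. Tensor $Q_\bullet$ with $X$ and equip it with the diagonal $\Lambda V$-action; then $\underline{{\rm Hom}}_{\Lambda V}(\one,-)$ is the functor of invariants under the augmentation ideal. Since $V$ consists of primitives in the Hopf algebra $\Lambda V$ and $V$ acts by zero on $X$ by hypothesis, the diagonal action simplifies to $v\cdot(\sigma\otimes\omega\otimes x)=\sigma\otimes(v\omega)\otimes x$. Hence the invariants in $S^kV\otimes\Lambda V\otimes X$ are exactly the top-$\Lambda$ component $S^kV\otimes\Lambda^2V\otimes X$, canonically identified with $S^kV\otimes X$ via $\Lambda^2V=\one$. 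This matches the identification $E_n(X)=(S^\bullet V\otimes X)^+$ used in the definition.

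Finally, the induced differential on $E_n(X)$ is just the restriction of $d_K\otimes{\rm id}_X$ to these invariants. An invariant element has the form $\sigma\otimes\omega^{(2)}\otimes x$ with $\omega^{(2)}\in\Lambda^2V$, and $d_K$ sends it to a sum of terms $v_\alpha\sigma\otimes(v_\beta\omega^{(2)})\otimes x$. But $v_\beta\omega^{(2)}\in V\cdot\Lambda^2V\subseteq\Lambda^3V=0$, so the induced differential vanishes. The one point I would double-check is the compatibility of the Shapiro-type identification $\underline{{\rm Hom}}_{\Lambda V}(\one,\Lambda V\otimes X)\cong X$ with the differential as described; for general $X$ this identification uses the module structure on $X$, but when $X$ is trivial it reduces literally to $\Lambda^2V\otimes X\cong\one\otimes X$, so everything is manifestly compatible. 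This is the main place where the hypothesis of triviality is used, and it is precisely what makes the argument straightforward.
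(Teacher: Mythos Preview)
Your argument is correct and is essentially a detailed unpacking of the paper's one-line proof, which simply says the statement is a ``straightforward generalization'' of the classical fact that for a finite-dimensional vector space $V$ the induced differential on ${\rm Hom}_{\Lambda V}(\bk,S^\bullet V\otimes\Lambda V)=S^\bullet V$ vanishes. You supply that generalization explicitly: the resolution differential is $\Lambda V$-linear and raises the $\Lambda$-slot by one on generators, the invariants sit in $\Lambda^2V$, and the image lands in $\Lambda^3V=0$.

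One small remark on terminology: the differential you write down (via coevaluation, raising the $\Lambda$-degree) is the differential of the injective resolution $Q_\bullet$, which is not literally the same as the differential on the Koszul complex $K^\bullet$ described in the paper's Section on the Koszul complex (there the differential preserves total degree and hence \emph{lowers} the $\Lambda$-degree, e.g.\ $S^{m-1}V\otimes V\to S^mV$ is multiplication). But $E_n(X)$ is defined using $Q_\bullet$, and the paper's own proof of the lemma uses ``Koszul differential'' in the same loose sense you do, so this is only a naming issue and does not affect the argument.
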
 

\begin{proof} It is easy to see that for a finite dimensional vector space $V$ over $\bk$, 
the differential on ${\rm Hom}_{\Lambda V}(\bk, S^\bullet V\otimes \Lambda V)=S^\bullet V$ 
induced by the Koszul differential on $S^\bullet V\otimes \Lambda V$ is zero. The lemma is 
a straightforward generalization of this fact. 
\end{proof} 

\begin{corollary}\label{rec1} Suppose $X\in \Ver_{2^n}$. Then we have an isomorphism 
\begin{align*}
\Ext^\bullet_{\Ver_{2^{n+1}}^+}(\one,X)&\cong \bigoplus_{i\ge 0}
\Ext^{\bullet-i}_{\Ver_{2^n}}(\one,S^iX_{n-1}[y_n]\otimes X)\\
&=\bigoplus_{i\ge 0}\Ext^{\bullet-i}_{\Ver_{2^n}^+}(\one,(S^iX_{n-1}[y_n]\otimes X)^+).
\end{align*}
This isomorphism maps the grading induced by the grading on $\Lambda X_{n-1}$
to the grading defined by $\deg(X_{n-1})=1$, 
$\deg(y_n)=2^n-1$ (i.e., it coincides with the cohomological grading). 
\end{corollary}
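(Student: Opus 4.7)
The plan is to assemble the statement from the full recursion in \eqref{recu} together with Lemma~\ref{dif0}, the point being that after these two inputs the decomposition is essentially formal.

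First I would rewrite \eqref{recu} in the compact form
\[ \Ext^\bullet_{\Ver_{2^{n+1}}^+}(\one,X)\;\cong\;\Ext^\bullet_{\Ver_{2^n}^+}(\one,E_n(X)), \]
with $E_n(X)=(S^\bullet X_{n-1}[y_n]\otimes X)^+\in D^+(\Ver_{2^n}^+)$ carrying the differential induced by the Koszul differential on $S^\bullet X_{n-1}[y_n]\otimes \Lambda X_{n-1}$. Since $X$ is assumed to lie in $\Ver_{2^n}$, its image under the inclusion $\Ver_{2^n}\hookrightarrow\Ver_{2^{n+1}}^+$ is a trivial $\Lambda X_{n-1}$-module, so Lemma~\ref{dif0} applies and the differential on $E_n(X)$ vanishes identically.

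With a vanishing differential, $E_n(X)$ splits in $D^+(\Ver_{2^n}^+)$ as $\bigoplus_{i\ge 0} (S^iX_{n-1}[y_n]\otimes X)^+[-i]$, where the $i$th summand is placed in cohomological degree $i$. Applying $\mathrm{Hom}_{D^+(\Ver_{2^n}^+)}(\one,-[\bullet])$ termwise then yields the first direct-sum formula in the corollary, the shift $\bullet\mapsto\bullet-i$ reflecting exactly the cohomological placement of each summand. The second equality in the corollary is the tautology $\Ext^\bullet_{\Ver_{2^n}}(\one,Y)=\Ext^\bullet_{\Ver_{2^n}^+}(\one,Y^+)$, which is valid because $\one\in\Ver_{2^n}^+$ and the block decomposition is orthogonal.

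For the grading assertion I would just trace through the conventions: the grading on $\Lambda X_{n-1}$ places its generators (elements of $X_{n-1}$) in degree $1$, while the auxiliary variable $y_n$ was introduced precisely to encode the period $2^n-1$ of the resolution $Q_\bullet$, hence carries degree $2^n-1$. Matching these against the cohomological placement of the summands in the previous step gives exactly the stated compatibility. The only real point of care I anticipate is verifying that the projection $(-)^+$ commutes both with the Koszul differential on $E_n(X)$ and with the cohomological direct-sum decomposition; this holds because $\Ver_{2^n}=\Ver_{2^n}^+\oplus\Ver_{2^n}^-$ is an orthogonal decomposition of abelian tensor categories, so $(-)^+$ is an exact $\bk$-linear functor that preserves both tensor products and differentials componentwise.
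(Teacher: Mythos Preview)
Your argument is correct and is exactly the approach the paper takes: the paper's proof is the single line ``Follows immediately from Lemma~\ref{dif0}'', and you have simply unpacked what that means by combining \eqref{recu} with the vanishing of the differential to split $E_n(X)$ as a direct sum of shifted objects. One small quibble: the projection $(-)^+$ does not literally preserve tensor products (contributions from $\Ver_{2^n}^-\otimes\Ver_{2^n}^-$ land in $\Ver_{2^n}^+$), but this is irrelevant here since all you need is that $(-)^+$ is an exact additive functor commuting with differentials.
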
 

\begin{proof} Follows immediately from Lemma \ref{dif0}.
\end{proof} 

\begin{remark} 
Corollary \ref{rec1} does not quite give a recursion to compute the Ext groups, since the object $(S^iX_{n-1}\otimes X)^+$ may not belong to $\Ver_{2^{n-1}}$ (i.e., it may carry a nontrivial action of $\Lambda X_{n-2}$). However, it has some useful consequences given below. 
\end{remark} 

Now recall that $\Ver_2={\rm Vec}$ and $\Ver_{2^2}^+$ is the category of ${\bf k}[\xi]$-modules where $\xi^2=0$. 
Define a functor $E_1\colon D^+(\Ver_{2^2}^+)\to D^+({\rm Vec})$ by 
\[ E_1(X):={\rm Hom}_{{\bf k}[\xi]}({\bf k},{\bf k}[y_1,\xi]\otimes X)
={\bf k}[y_1]\otimes X, \]
with the differential 
$ d(y_1^m\otimes x)=y_1^{m+1}\otimes \xi x+y_1^m\otimes dx$. 
We thus obtain the following proposition. 

\begin{proposition} 
We have a natural isomorphism
\[ \Ext^\bullet_{D^+(\Ver_{2^{n+1}}^+)}(\one,X)\cong \Ext^\bullet_{D^+(\Ver_{2^n}^+)}(\one,E_n(X)) \]
for $n\ge 2$, and 
\[ \Ext^\bullet_{D^+(\Ver_{2^2}^+)}(\one,X)\cong \Ext^\bullet_{D^+({\rm Vec})}(\one,E_1(X)). \]
\end{proposition}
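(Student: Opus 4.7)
The plan is to upgrade the calculation~\eqref{recu}, which handles the case of a single object, to complexes $X\in D^+(\Ver_{2^{n+1}}^+)$ via a totalization argument. I would invoke three ingredients already available: the identification of $\Ver_{2^{n+1}}^+$ with the category of $\Lambda X_{n-1}$-modules in $\Ver_{2^n}$; the periodic injective resolution $\one \to Q_\bullet = S^\bullet X_{n-1}[y_n]\otimes \Lambda X_{n-1}$ from Corollary~\ref{co:koszul}; and the Shapiro-type adjunction ${\rm Hom}_{\Lambda X_{n-1}}(\one, M\otimes \Lambda X_{n-1})\cong {\rm Hom}_{\Ver_{2^n}}(\one, M)$ for $M\in \Ver_{2^n}$, valid because $\Lambda X_{n-1}$ is Frobenius (Corollary~\ref{cofrob}), so free $\Lambda X_{n-1}$-modules are simultaneously projective and injective, and induced equals coinduced.

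For $n\ge 2$ and $X\in D^+(\Ver_{2^{n+1}}^+)$, the first step is to form the double complex $Q_\bullet \otimes X$, with one differential from $Q_\bullet$ and the other from $X$, and show that the natural map $X \to \mathrm{Tot}(Q_\bullet \otimes X)$ is a quasi-isomorphism to a complex of injective $\Lambda X_{n-1}$-modules. The quasi-isomorphism follows because $\one \to Q_\bullet$ is a resolution and every object of $\Ver_{2^n}$ is flat over $\bk$, so tensoring column-by-column with the bounded-below complex $X$ preserves acyclicity; injectivity of each term follows from the Frobenius property. The second step is to apply ${\rm Hom}_{\Lambda X_{n-1}}(\one,-)$ termwise and invoke the Shapiro adjunction, converting the resulting complex into ${\rm Hom}_{\Ver_{2^n}}(\one, \mathrm{Tot}(S^\bullet X_{n-1}[y_n]\otimes X))$. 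Since ${\rm Hom}_{\Ver_{2^n}}(\one,-)$ detects only the $+$-summand, this equals ${\rm Hom}_{\Ver_{2^n}^+}(\one, E_n(X))$; taking cohomology yields $\Ext^\bullet_{D^+(\Ver_{2^n}^+)}(\one, E_n(X))$ as desired. The base case $n=1$ proceeds identically with $\Ver_{2^n}$ replaced by ${\rm Vec}$ and $\Lambda X_{n-1}$ by $\bk[\xi]/(\xi^2)$; the Shapiro adjunction reduces to the tautology ${\rm Hom}_{\bk[\xi]}(\bk, N\otimes \bk[\xi])=N$, and the promised differential $d(y_1^m\otimes x) = y_1^{m+1}\otimes\xi x + y_1^m\otimes dx$ drops out of the totalization.

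The hard part will be the bookkeeping around the unboundedness of $Q_\bullet$: one must check that the termwise Shapiro adjunction commutes with both differentials so that the isomorphism survives totalization, and that the totalization converges in each fixed cohomological degree — which is precisely what the $D^+$ hypothesis on $X$ guarantees. Once these compatibilities are in place, the proposition is essentially a repackaging of~\eqref{recu} in derived-categorical language, with naturality in $X$ being automatic from the construction.
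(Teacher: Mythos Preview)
Your argument is correct and essentially fills in the details of the paper's own proof, which consists only of the sentence ``the same calculation applies if $X$ is such a complex'' preceding the definition of $E_n$; your totalization-and-Shapiro description is exactly what that sentence means when unpacked. One minor point: Corollary~\ref{cofrob} concerns $SV$, not $\Lambda V$ --- the Frobenius property of $\Lambda X_{n-1}$ that you need is immediate from $\Lambda^2 X_{n-1}\cong\one$, $\Lambda^3 X_{n-1}=0$.
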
 

This implies the following corollary. Let 
\[ E=E_1\circ\dots\circ E_n\colon D^+(\Ver_{2^{n+1}}^+)\to D^+({\rm Vec}). \]

\begin{corollary} 
We have a linear natural isomorphism 
\[ \Ext^\bullet_{D^+(\Ver_{2^{n+1}}^+)}(\one,X)=H^\bullet(E(X)). \]
\end{corollary}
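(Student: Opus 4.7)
The Corollary follows from a short induction on $n$ in which the previous Proposition is applied $n$ times, peeling off one factor $E_k$ at a stage and dropping from $D^+(\Ver_{2^{k+1}}^+)$ down to $D^+(\Ver_{2^k}^+)$. First I would dispose of the base case $n=1$: by the second clause of the Proposition,
\[
\Ext^\bullet_{D^+(\Ver_{2^2}^+)}(\one,X)\cong \Ext^\bullet_{D^+({\rm Vec})}(\one,E_1(X)),
\]
and since $\one=\bk$ is projective in ${\rm Vec}$ one has $\Ext^\bullet_{D^+({\rm Vec})}(\one,Y^\bullet)=H^\bullet(Y^\bullet)$ for every bounded-below complex of vector spaces. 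Hence the right-hand side is $H^\bullet(E_1(X))=H^\bullet(E(X))$, as required.

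For the inductive step, assume the isomorphism for $n-1$ and let $X\in D^+(\Ver_{2^{n+1}}^+)$. The first clause of the Proposition gives a natural isomorphism
\[
\Ext^\bullet_{D^+(\Ver_{2^{n+1}}^+)}(\one,X)\cong \Ext^\bullet_{D^+(\Ver_{2^n}^+)}(\one,E_n(X)).
\]
Applying the inductive hypothesis to the object $E_n(X)\in D^+(\Ver_{2^n}^+)$ identifies the right-hand side with $H^\bullet\bigl((E_1\circ\cdots\circ E_{n-1})(E_n(X))\bigr)=H^\bullet(E(X))$. Composing the two isomorphisms and iterating finishes the induction.

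Naturality in $X$ is automatic, being the composition of the natural isomorphisms supplied at each stage by the Proposition. The only point that requires a sentence of justification is that each $E_k$ is a well-defined functor between derived categories, so that the inductive hypothesis can legitimately be applied to $E_n(X)$; this is already implicit in the Proposition, since $E_k$ is built from tensoring with the resolution $S^\bullet X_{k-1}[y_k]\otimes\Lambda X_{k-1}$ by projective-injective $\Lambda X_{k-1}$-modules (Frobenius property), so it takes quasi-isomorphisms to quasi-isomorphisms. I do not anticipate any essential obstacle beyond this bookkeeping; the substantive work has been done in proving the Proposition, and the Corollary is really just a statement about iterating it.
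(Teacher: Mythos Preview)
Your argument is correct and is exactly what the paper intends: the Corollary is stated without proof, prefaced only by ``This implies the following corollary'', so the induction you spell out (apply the Proposition $n$ times, using $\Ext^\bullet_{D^+(\mathrm{Vec})}(\one,Y^\bullet)=H^\bullet(Y^\bullet)$ at the bottom) is precisely the intended justification.
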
 

The complex of vector spaces $E(X)$ has the following structure: 
\begin{equation}\label{eofx}
E(X)=(S^\bullet X_1\otimes\dots\otimes(S^\bullet X_{n-2} 
\otimes 
(S^\bullet X_{n-1}\otimes X)^+)^+\dots)^+[y_1,y_2,\dots,y_n], 
\end{equation}
and it is easy to see that the differential is linear over ${\bf k}[y_1,\dots,y_n]$, since multiplication by $y_i$ is induced by the shift in the corresponding periodic resolution.  
Thus we get
 
\begin{proposition}\label{finge} 
For any $X\in \Ver_{2^{n+1}}$, $\Ext^\bullet_{\Ver_{2^{n+1}}}(\one,X)$ 
is a graded finitely generated module over ${\bf k}[y_1,\dots,y_n]$. 
\end{proposition}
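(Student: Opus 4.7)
The plan is to combine the identification $\Ext^\bullet_{\Ver_{2^{n+1}}}(\one,X)\cong H^\bullet(E(X))$ from the preceding corollary with Noetherianness of ${\bf k}[y_1,\dots,y_n]$. Since $\Ver_{2^{n+1}}=\Ver_{2^{n+1}}^+\oplus\Ver_{2^{n+1}}^-$ as a direct sum decomposition and $\Ext^\bullet_{\Ver_{2^{n+1}}}(\one,Y)=0$ for $Y\in\Ver_{2^{n+1}}^-$, we may assume $X\in\Ver_{2^{n+1}}^+$. The text has already observed that the differential on $E(X)$ is ${\bf k}[y_1,\dots,y_n]$-linear, so it suffices to show that the underlying graded ${\bf k}[y_1,\dots,y_n]$-module of \eqref{eofx} is finitely generated; then $H^\bullet(E(X))$ is a subquotient of a finitely generated module over a Noetherian ring, hence itself finitely generated.

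For the finite generation of $E(X)$, I would argue by downward induction on $k$ that the partial composition $E_k\circ E_{k+1}\circ\cdots\circ E_n(X)$ is a finitely generated ${\bf k}[y_k,\dots,y_n]$-module. The crucial input is Corollary~\ref{cofpdim2}(iii), which says $S^mX_j=0$ for $m>2^{j+1}-2$, so every $S^\bullet X_j$ appearing in \eqref{eofx} is a finite-length object of the relevant $\Ver_{2^{j+1}}$. For the base case $k=n$, the object $X$ has finite length in $\Ver_{2^{n+1}}^+$, hence also as an object of $\Ver_{2^n}$, and so $E_n(X)$ is built from the finite-length object $(S^\bullet X_{n-1}\otimes X)^+$ by freely adjoining the variable $y_n$, giving a finitely generated free ${\bf k}[y_n]$-module. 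For the inductive step, one tensors a finitely generated ${\bf k}[y_k,\dots,y_n]$-module with the finite-length object $S^\bullet X_{k-2}$, freely adjoins $y_{k-1}$, and passes to the $+$-summand; each of these operations preserves finite generation over the appropriate polynomial ring.

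The main technical obstacle is the bookkeeping of the several ${\bf k}[y_j]$-actions through the nested functors and $+$-projections: one must check that the action of $y_j$ produced at stage $j$ survives intact under subsequent applications of $E_{j-1},\dots,E_1$. This holds because $y_j$ is by construction the shift endomorphism in the periodic resolution $S^\bullet X_{j-1}[y_j]\otimes\Lambda X_{j-1}$, hence an honest morphism in $\Ver_{2^j}$ that commutes with every later Koszul differential and is compatible with every later $+$/$-$ decomposition (the latter being a direct-summand decomposition in the ambient abelian category). Once this compatibility is nailed down, the induction runs cleanly and Proposition~\ref{finge} follows from the Noetherianness argument above.
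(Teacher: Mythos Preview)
Your proposal is correct and takes essentially the same approach as the paper: the paper simply observes that $E(X)$ has the form \eqref{eofx} with ${\bf k}[y_1,\dots,y_n]$-linear differential, from which finite generation follows immediately since the part before $[y_1,\dots,y_n]$ is finite-dimensional (each $S^\bullet X_j$ being a finite-length object by Corollary~\ref{cofpdim2}(iii)). You have spelled out in more detail what the paper leaves implicit---the reduction to $\Ver_{2^{n+1}}^+$, the inductive finite-generation argument, and the compatibility of the $y_j$-actions with the later functors---but these are elaborations of the same idea rather than a different route.
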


In particular, we get that 
\[ \Ext^\bullet_{\Ver_{2^{n+1}}}(\one,\one)=
\Ext^\bullet_{\Ver_{2^{n+1}}^+}(\one,\one)=H^\bullet(E(\one)), \]
where 
\[ E(\one)=(S^\bullet X_1\otimes\dots\otimes(S^\bullet X_{n-2}
\otimes (S^\bullet X_{n-1})^+)^+\dots)^+[y_1,y_2,\dots,y_n]. \]
Note that we have $1\in E(\one)$ and $d(1)=0$, so we obtain a 
natural linear map 
\[ \phi\colon {\bf k}[y_1,\dots,y_n]\to 
\Ext^\bullet_{\Ver_{2^{n+1}}}(\one,\one). \] 

\begin{proposition}\label{homomo} For $1\le i\le n$ %the operator of 
multiplication by $y_i$ on $\Ext^\bullet_{\Ver_{2^{n+1}}}(\one,X)$ 
coincides with the cup product with $\phi(y_i)$. In particular, 
$\phi$ is an algebra homomorphism. 
\end{proposition}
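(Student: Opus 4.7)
The plan is to reduce both statements to the claim that $\phi(y_i)$, viewed as a morphism $\one \to \one[2^i-1]$ in $D^+(\Ver_{2^{n+1}}^+)$, is represented by the periodicity shift of the Koszul resolution at level $i$, and that this shift is precisely what induces multiplication by $y_i$ in the complex $E(X)$. Since the second assertion of the proposition follows from the first by taking $X = \one$ (together with the observation that $\phi(1) = 1$), the entire content is the coincidence of two operators on $\Ext^\bullet_{\Ver_{2^{n+1}}}(\one, X)$.

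First I would unpack the definition of $\phi(y_i)$. Concretely, $y_i$ enters the formula \eqref{eofx} because the injective resolution $Q_\bullet$ in $\Ver_{2^i}^+$ constructed from Corollary~\ref{co:koszul} is periodic with period $2^i-1$. This periodicity is a morphism of complexes $\tau_i : Q_\bullet \to Q_{\bullet - (2^i-1)}$, and it represents a distinguished class $[\tau_i] \in \Ext^{2^i-1}_{\Ver_{2^i}^+}(\one, \one)$. I would then chase this class through the functors $E_{i-1}, \dots, E_1$ and check that its image in $H^{2^i-1}(E(\one))$ is exactly the element $y_i \in \bk[y_1,\dots,y_n] \subset E(\one)$, which is the definition of $\phi(y_i)$. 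This step is largely formal because each $E_k$ is constructed from a Hom-computation against a periodic resolution, so the periodicity at level $i$ passes through untouched.

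Next I would identify the two operators. On the $E$-side, multiplication by $y_i$ on $E(X)$ is, by construction, induced by the chain map $\tau_i \otimes \mathrm{id}_X$ at the $i$-th step of the iterated resolution; this is literally how the $\bk[y_1,\dots,y_n]$-module structure on \eqref{eofx} arises. On the Ext-side, for any $\beta \in \Ext^q_{\Ver_{2^{n+1}}^+}(\one, X)$ represented by a morphism $\one \to X[q]$ in the derived category, the cup product $\phi(y_i) \cup \beta$ is, by the standard derived-category description of Yoneda/cup product, the composition $\one \xrightarrow{\tau_i} \one[2^i-1] \xrightarrow{\beta[2^i-1]} X[q+2^i-1]$. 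Under the isomorphism $\Ext^\bullet_{\Ver_{2^{n+1}}^+}(\one, X) \cong H^\bullet(E(X))$ obtained by iterating the recursion \eqref{recu}, this composition is exactly the image of $\beta$ under the action of $\tau_i$, i.e., multiplication by $y_i$.

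The bookkeeping obstacle is making the identification of $\phi(y_i)$ with the periodicity class $[\tau_i]$ fully rigorous under the composite functor $E$. The key tool is the Shapiro-lemma step used in \eqref{recu} together with Lemma~\ref{dif0}: the free/coinduced $\Lambda X_{k-1}$-module structure of the shifted resolution $S^\bullet X_{k-1}[y_k] \otimes \Lambda X_{k-1}$ ensures that the periodicity shift at level $k$ commutes with the Hom adjunction and therefore induces multiplication by $y_k$ on the level-$(k-1)$ complex. Once this compatibility is established at each level, the equality of operators, and hence the algebra-homomorphism property of $\phi$, is immediate.
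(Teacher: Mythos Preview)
Your proposal is correct in substance and rests on the same key idea as the paper's proof: the element $\phi(y_i)$ is the periodicity (Yoneda) class of the Koszul resolution at the $i$th level, and multiplication by $y_i$ on the complex $E(X)$ is exactly the action of that periodicity shift, hence the Yoneda/cup product with $\phi(y_i)$. The second statement then follows from the first by taking $X=\one$, as you say.

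The paper organizes the argument slightly differently, by induction on $n$. For $i<n$, the variable $y_i$ enters only through $E_1,\dots,E_{n-1}$, so its action on $\Ext^\bullet_{\Ver_{2^{n+1}}}(\one,X)\cong\Ext^\bullet_{\Ver_{2^n}}(\one,E_n(X))$ is the action already identified (by the inductive hypothesis) as cup product with $\phi(y_i)$ in $\Ver_{2^n}$; this class is then pushed up to $\Ver_{2^{n+1}}$ via Proposition~\ref{injec}. For $i=n$, one is at the top level and the identification of $\phi(y_n)$ with the Yoneda concatenation by the Koszul complex $K^\bullet$ is immediate. This inductive packaging avoids your ``chase through $E_{i-1},\dots,E_1$'' at every level and makes the proof shorter; your direct approach is equivalent but requires checking the compatibility with each $E_k$ individually.

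One small indexing slip: the periodicity $\tau_i$ you describe lives in the category of $\Lambda X_{i-1}$-modules in $\Ver_{2^i}$, i.e., in $\Ver_{2^{i+1}}^+$, not in $\Ver_{2^i}^+$ (see the convention preceding \eqref{recu}, where $y_{k+1}$ has degree $2^{k+1}-1$ and comes from the resolution in $\Ver_{2^{k+1}}$). This does not affect the argument.
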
 

\begin{proof} The proof is by induction in $n$. For $i<n$ the statement follows 
from the inductive assumption. For $i=n$, we see that the cup product with 
$\phi(y_n)$ can be realised as the Yoneda product ($=$ concatenation) 
with the Koszul complex $K^\bullet$, which represents the class 
$\phi(y_n)$ in the Yoneda realization of $\Ext$. This proves the 
first statement. The second statement
then follows since $\phi(ab)=(ab)\cdot 1=a\cdot (b\cdot 1)=a\cdot \phi(b)=\phi(a)\phi(b)$. 
\end{proof} 

\begin{proposition}\label{injec} 
For $X\in \Ver_{2^{n}}^+$ the natural map 
\[ \Ext^\bullet_{\Ver_{2^n}}(\one,X)[y_n]\to
  \Ext^\bullet_{\Ver_{2^{n+1}}}(\one,X) \] 
is an injective morphism of ${\bf k}[y_1,...,y_n]$-modules 
which is also a morphism of algebras for $X=\one$. 
\end{proposition}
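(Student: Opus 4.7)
The plan is to identify the natural map with the inclusion of the $i=0$ summand in the direct sum decomposition supplied by Corollary~\ref{rec1}. Unpacking, the natural map should be the composition of two pieces: (1) the homomorphism
\[ \Ext^\bullet_{\Ver_{2^n}}(\one,X) \to \Ext^\bullet_{\Ver_{2^{n+1}}}(\one,X) \]
induced by the tensor inclusion $\Ver_{2^n}\hookrightarrow \Ver_{2^{n+1}}^+$ (viewing any object of $\Ver_{2^n}$ as a trivial $\Lambda X_{n-1}$-module via the augmentation $\Lambda X_{n-1}\to\one$), and (2) multiplication by powers of $\phi(y_n)\in \Ext^{2^n-1}_{\Ver_{2^{n+1}}}(\one,\one)$, which by Proposition~\ref{homomo} coincides with cup product. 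This visibly gives a map of $\bk[y_1,\dots,y_n]$-modules: the generators $y_1,\dots,y_{n-1}$ act through their images under the inclusion (inductively, using Proposition~\ref{homomo} at level $n-1$), and $y_n$ acts by $\phi(y_n)$.

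The first main step is to trace through the chain of isomorphisms \eqref{recu} and check that, for $X\in\Ver_{2^n}^+$, the $i=0$ summand of the decomposition in Corollary~\ref{rec1},
\[ \Ext^\bullet_{\Ver_{2^n}^+}(\one,(S^0 X_{n-1}\otimes X)^+)[y_n] \;=\; \Ext^\bullet_{\Ver_{2^n}}(\one,X)[y_n], \]
is exactly the image of the natural map. The point is that under the Shapiro identification, a class in the $i=0$ summand is represented by a morphism from $\one$ into the constant (in $S^\bullet X_{n-1}$) part of $S^\bullet X_{n-1}[y_n]\otimes \Lambda X_{n-1}\otimes X$; such a morphism is precisely what one obtains from a class in $\Ext^\bullet_{\Ver_{2^n}}(\one,X)$ by applying the augmentation $\Lambda X_{n-1}\to\one$, and the $y_n^k$ factor corresponds to the $k$-fold shift along the periodic resolution $Q_\bullet$, i.e., Yoneda product with $\phi(y_n)^k$. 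Once this identification is established, injectivity is immediate: the natural map is the inclusion of a direct summand of graded $\bk[y_1,\dots,y_n]$-modules, hence a split monomorphism.

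For the algebra statement with $X=\one$, the inclusion $\Ver_{2^n}\hookrightarrow \Ver_{2^{n+1}}$ is a symmetric monoidal functor, so the induced map $\Ext^\bullet_{\Ver_{2^n}}(\one,\one)\to \Ext^\bullet_{\Ver_{2^{n+1}}}(\one,\one)$ is a homomorphism of graded-commutative algebras (Yoneda/cup products are preserved by tensor functors). Multiplication by $y_n$ on the source corresponds to cup product by $\phi(y_n)$ on the target by Proposition~\ref{homomo}, and since $\phi(y_n)$ lies in the graded-commutative Ext algebra it intertwines formally with all elements up to the usual sign. Combining these two facts, the map $\Ext^\bullet_{\Ver_{2^n}}(\one,\one)[y_n]\to \Ext^\bullet_{\Ver_{2^{n+1}}}(\one,\one)$ is a ring homomorphism.

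The hard part will be the compatibility check in the first main step: verifying that the categorical description of the natural map (inclusion of categories followed by multiplication by $\phi(y_n)$) matches the algebraically defined $i=0$ summand arising from the explicit free resolution $S^\bullet X_{n-1}[y_n]\otimes \Lambda X_{n-1}$ and the Shapiro lemma. Everything else is formal once that matching is in place.
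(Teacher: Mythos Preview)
Your approach is essentially the same as the paper's: both identify the natural map with the inclusion of the $i=0$ summand (equivalently, the $\one$ direct summand of $S^{\rm even}X_{n-1}$) in the decomposition of Corollary~\ref{rec1}, whence injectivity is immediate. The paper's proof is a one-liner that leaves the compatibility check and the algebra statement implicit, whereas you spell out why the $i=0$ summand matches the categorically defined map and why it is a ring homomorphism for $X=\one$; this extra care is fine but not strictly required beyond what the paper asserts.
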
 

\begin{proof} This follows from the isomorphism 
$$
\Ext^\bullet_{\Ver_{2^{n+1}}}(\one,X)\cong \Ext^\bullet_{\Ver_{2^{n}}}(\one,(S^{\rm even} X_{n-1}\otimes X)[y_n])
$$
 since $\one$ is a direct summand of $S^{\rm even} X_{n-1}$. 
\end{proof} 

\begin{proposition}\label{byzero} Let $U\in \Ver_{2^n}$ and 
$X:=U\otimes \Lambda X_{n-1}\in \Ver_{2^{n+1}}^+$ be a free $\Lambda X_{n-1}$-module. Then 
$y_n$ acts on $\Ext^\bullet_{\Ver_{2^{n+1}}}(\one,X)$ by zero. 
\end{proposition}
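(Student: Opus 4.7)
The plan is to reduce the claim to showing $F_*(y_n)=0$ in $\Ext^{2^n-1}_{\Ver_{2^n}}(\one,\one)$, where $F\colon \Ver_{2^{n+1}}^+\to \Ver_{2^n}$ is the forgetful functor. Since $\Lambda X_{n-1}$ is Frobenius in $\Ver_{2^n}$ (Corollary \ref{cofrob}), the module $X=U\otimes \Lambda X_{n-1}$ is coinduced from $U$, and Frobenius reciprocity yields a natural Shapiro-type isomorphism
\[
\Ext^\bullet_{\Ver_{2^{n+1}}^+}(\one,\, U\otimes \Lambda X_{n-1}) \;\cong\; \Ext^\bullet_{\Ver_{2^n}}(\one, U).
\]
Naturality in the source variable shows that under this isomorphism, multiplication by $z\in\Ext^\bullet_{\Ver_{2^{n+1}}^+}(\one,\one)$ corresponds to cup product by $F_*(z)\in\Ext^\bullet_{\Ver_{2^n}}(\one,\one)$. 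Hence the proposition reduces to $F_*(y_n)=0$.

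Next I would identify $F_*(y_n)$ with the Yoneda class in $\Ver_{2^n}$ of the Koszul extension
\[
0\to \one\to K^0\to K^1\to\cdots\to K^{2^n-2}\to \one\to 0,
\]
where $K^i=S^iX_{n-1}\otimes \Lambda X_{n-1}$ and the two end copies of $\one$ are the nonzero cohomology objects from Corollary \ref{co:koszul}. This sequence is obtained by applying $F$ to one period of the resolution $Q_\bullet$, and its class is $F_*(y_n)$ by functoriality of the Yoneda product.

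Finally, I would split $K^\bullet$ in the derived category $D^b(\Ver_{2^n})$ as $\one\oplus \one[-(2^n-2)]$. Crucially, in $\Ver_{2^n}$ (though not in $\Ver_{2^{n+1}}^+$), the object $\Lambda X_{n-1}$ decomposes via its exterior grading as $\one\oplus X_{n-1}\oplus \Lambda^2 X_{n-1}$, and the socle summand $\Lambda^2 X_{n-1}\cong \one$ coincides with $\ker(d_0\colon K^0\to K^1)$, since this kernel is the image of the $\Lambda X_{n-1}$-module socle inclusion realizing $H^0(K^\bullet)$. Extending by zero in positive degrees produces a section $s\colon \one\hookrightarrow K^\bullet$ (a chain map since $d_0\circ s_0=0$) and a retraction $r\colon K^\bullet\twoheadrightarrow \one$ (trivially a chain map), with $r\circ s=\mathrm{id}_\one$. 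The idempotent $e=s\circ r$ then splits $K^\bullet$ at the chain level as $\one\oplus \ker(e)$ in $\Ver_{2^n}$, and $\ker(e)$ has cohomology only in degree $2^n-2$, so it is quasi-isomorphic to $\one[-(2^n-2)]$. This gives the desired derived decomposition, and hence $F_*(y_n)=0$.

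The main obstacle is the naturality step: one must verify carefully that the Shapiro isomorphism intertwines multiplication by $y_n$ on the left with cup product by $F_*(y_n)$ on the right, and that the Yoneda extension class of $K^\bullet$ in $\Ver_{2^n}$ really is $F_*(y_n)$. The splitting step, by contrast, is clean once one notices that the exterior grading of $\Lambda X_{n-1}$ survives as a direct-sum decomposition of objects in $\Ver_{2^n}$ but is forgotten inside $\Ver_{2^{n+1}}^+$, which is precisely why $y_n$ is a genuinely new class in $\Ver_{2^{n+1}}^+$ that dies upon restriction.
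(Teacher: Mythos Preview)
Your argument is correct and takes a genuinely different route from the paper. Both begin with the Shapiro isomorphism $\Ext^\bullet_{\Ver_{2^{n+1}}^+}(\one, U\otimes\Lambda X_{n-1})\cong \Ext^\bullet_{\Ver_{2^n}}(\one, U)$, but then diverge: the paper observes that the right-hand side carries the trivial action of the group $\bG_m$ scaling $X_{n-1}$, while multiplication by $y_n$ shifts the $\bG_m$-degree by $2^n-1$, so the action of $y_n$ is forced to vanish. You instead reduce to showing $F_*(y_n)=0$ and prove this by explicitly splitting the Koszul extension in $\Ver_{2^n}$, using that the inclusion $\one\hookrightarrow K^0=\Lambda X_{n-1}$ admits a retraction once $\Lambda X_{n-1}$ is viewed as the object $\one\oplus X_{n-1}\oplus\one$ (and $\mathrm{Hom}(\one,X_{n-1})=0$ since $X_{n-1}$ is simple and nontrivial). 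The paper's proof is shorter and leverages the auxiliary grading already in use elsewhere; yours makes concretely visible \emph{why} $y_n$ dies upon restriction---the $\Lambda X_{n-1}$-module socle inclusion becomes a split monomorphism once the module structure is forgotten. Your concern about the naturality step is well placed but not an obstacle: compatibility of the derived adjunction isomorphism with precomposition is standard, so Yoneda product by $z$ on the left matches Yoneda product by $F(z)$ on the right.
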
 

\begin{proof} By the Shapiro lemma we have 
$$
\Ext^\bullet_{\Ver_{2^{n+1}}}(\one,X)\cong \Ext^\bullet_{\Lambda X_{n-1}}(\one,X)\cong
\Ext^\bullet_{\Ver_{2^n}}(\one,U).
$$
Therefore, the group $\bG_m$ scaling $X_{n-1}$ acts trivially 
on $\Ext^\bullet_{\Ver_{2^{n+1}}}(\one,X)$. So the statement follows, as 
$y_n$ has degree $2^n-1$ with respect to this action. 
\end{proof} 

Let $S\subset \lbrace{1,...,n-1\rbrace}$ and $X_S:=\cotimes_{i\in S}X_i$ 
be the simple object of $\Ver_{2^{n}}$ attached to $S$ in \cite{Benson/Etingof:2019a}. 

\begin{proposition}\label{pr:tors} 
\begin{enumerate}[\rm (i)]
\item If $i\in S$ and $Y\in \Ver_{2^{n+1}}^+$ then multiplication by $y_i$ acts by zero 
on $\Ext^\bullet_{\Ver_{2^{n+1}}^+}(Y,X_S)$. 
Hence $\Ext^\bullet_{\Ver_{2^{n+1}}^+}(Y,X_S)$ is a torsion module 
over ${\bf k}[y_1,...,y_n]$ unless $S=\varnothing$ (i.e., $X_S=\one$). 
\item
 The annihilator of $\Ext^\bullet_{\Ver_{2^{n+1}}^+}(X_S,X_S)$ in ${\bf k}[y_1,...,y_n]$ 
is generated by $y_i$ with $i\in S$. 
\end{enumerate}
\end{proposition}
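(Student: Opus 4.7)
The plan is to handle~(i) by reducing to $Y = \one$ via rigidity and then using a weight argument with the $\bG_m$-action scaling $X_{i-1}$, and to handle~(ii) by combining~(i) with an induction via Proposition~\ref{injec}.

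For part~(i), by rigidity $\Ext^\bullet_{\Ver_{2^{n+1}}^+}(Y, X_S) \cong \Ext^\bullet_{\Ver_{2^{n+1}}^+}(\one, Y^* \otimes X_S)$, so it suffices to handle $Y = \one$. Recall from the Remark following the recursion that the multiplicative group $\bG_m$ acting on $\Ver_{2^{i+1}}^+$ by scaling the generating object $X_{i-1}$ induces, via the inclusion $\Ver_{2^{i+1}} \subset \Ver_{2^{n+1}}$, a $\bZ$-grading on $\Ext^\bullet_{\Ver_{2^{n+1}}^+}(\one, X_S)$, and the class $y_i$ has $\bG_m$-weight $2^i - 1$. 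The strategy is to show that this Ext group is concentrated in a single $\bG_m$-weight modulo $2^i - 1$, forcing multiplication by $y_i$ to land outside the support. This concentration comes from the fact that $X_S$ with $i \in S$ has a definite $\bG_m$-weight determined by which $X_j$'s appear as tensor factors, which is transferred to Ext by working with a $\bG_m$-equivariant resolution. A careful analysis of the functor $E = E_1 \circ \cdots \circ E_n$ recovers exactly this: the $\bG_m$-weights of cocycles in $E(X_S)$ are determined by the tensor factorisation of $X_S$ modulo the contributions of the $y_j$ polynomial variables, so $y_i$-multiplication shifts the weight by $2^i - 1$ and lands in a weight-empty subspace.

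For part~(ii), the inclusion $(y_i : i \in S) \subseteq \operatorname{Ann}\Ext^\bullet_{\Ver_{2^{n+1}}^+}(X_S, X_S)$ is immediate from~(i) with $Y = X_S$. We prove the reverse by induction on $n$, showing that $\bk[y_j : j \leq n,\ j \notin S]$ acts faithfully on $\Ext^\bullet_{\Ver_{2^{n+1}}^+}(X_S, X_S)$. Since $S \subset \{1, \ldots, n-1\}$ we have $n \notin S$. Applying Proposition~\ref{injec} to $X = X_S^* \otimes X_S \in \Ver_{2^n}^+$ gives an injective map
\[ \Ext^\bullet_{\Ver_{2^n}^+}(X_S, X_S)[y_n] \hookrightarrow \Ext^\bullet_{\Ver_{2^{n+1}}^+}(X_S, X_S). \]
By the inductive hypothesis, $\bk[y_j : j \leq n-1,\ j \notin S]$ acts faithfully on $\Ext^\bullet_{\Ver_{2^n}^+}(X_S, X_S)$, hence after adjoining the freely-acting generator $y_n$ the ring $\bk[y_j : j \leq n,\ j \notin S]$ acts faithfully on the left-hand side, and injectivity transports faithfulness to the right-hand side.

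The main obstacle is part~(i): to formalise the weight-concentration argument one must identify the $\bG_m$-grading on $\Ext^\bullet_{\Ver_{2^{n+1}}^+}(\one, X_S)$ precisely in terms of the iterated Koszul description given by the functor $E$, and verify that every cocycle has a well-defined single $\bG_m$-weight determined by $X_S$ together with the polynomial variable contributions. An alternative approach that sidesteps this bookkeeping is to realise $y_i$ as the Yoneda class of the Koszul complex of $X_{i-1}$ in $\Ver_{2^i}$ and to construct an explicit contracting homotopy for this complex after tensoring with any object having $X_i$ as a tensor factor, which directly witnesses the vanishing of cup product with $y_i$ on the corresponding Ext groups.
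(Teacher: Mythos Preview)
Your proposal has genuine gaps in both parts.

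\textbf{Part (i).} The weight argument does not work as written. First, the reduction via rigidity replaces the target $X_S$ by $Y^*\otimes X_S$, so you would need a weight statement for arbitrary targets of this form, not just $X_S$ itself; your sketch never returns to this point. Second, and more seriously, the phrase ``concentrated in a single $\bG_m$-weight modulo $2^i-1$'' cannot do what you want: since $y_i$ has weight exactly $2^i-1$, multiplication by $y_i$ \emph{preserves} any congruence class modulo $2^i-1$. To force $y_i$ to act by zero you would need concentration in a single weight, full stop. But this is false: for the $\bG_m$ scaling $X_{n-1}$, Corollary~\ref{rec1} identifies the induced grading with the cohomological grading, so the Ext is spread over many weights. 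For the $\bG_m$ scaling $X_{i-1}$ with $i<n$, you give no construction of an action on $\Ver_{2^{n+1}}^+$ at all. The paper's Proposition~\ref{byzero} makes the weight argument work only in the specific situation of a \emph{free} $\Lambda X_{n-1}$-module and only for the top variable $y_n$; getting the smaller $y_i$'s requires an induction. The paper runs this induction on $n$, splitting on whether $n-1\in S$: if $n-1\notin S$ one rewrites $\Ext^\bullet_{\Ver_{2^{n+1}}^+}(Y,X_S)$ with $X_S$ still in the target at level $n$ and invokes the inductive hypothesis; if $n-1\in S$ one uses $X_{n-1}\otimes X_{n-1}=\Lambda X_{n-2}$ (free) together with Proposition~\ref{byzero} and the inductive hypothesis at level $n-1$.

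\textbf{Part (ii).} Your induction via Proposition~\ref{injec} is correct exactly when $n-1\notin S$, and this is the paper's argument in that case. But when $n-1\in S$ it breaks: the inductive hypothesis is the statement for $n-1$, i.e.\ it concerns $\Ext^\bullet_{\Ver_{2^n}^+}(X_{S'},X_{S'})$ for $S'\subset\{1,\dots,n-2\}$ and $X_{S'}\in\Ver_{2^{n-1}}$. If $n-1\in S$ then $X_S\notin\Ver_{2^{n-1}}$ (indeed $X_S\in\Ver_{2^n}^-$), so the hypothesis says nothing about $\Ext^\bullet_{\Ver_{2^n}}(X_S,X_S)$. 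The paper handles this case by writing $X_S=X_{S'}\otimes X_{n-1}$ and using $X_{n-1}\otimes X_{n-1}=\Lambda X_{n-2}$ to exhibit $\Ext^\bullet_{\Ver_{2^{n-1}}}(X_{S'},X_{S'})$ as a direct summand of $\Ext^\bullet_{\Ver_{2^{n+1}}^+}(X_S,X_S)$, then applies the inductive hypothesis at level $n-2$.

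In short, both parts of the paper's proof hinge on the same case split (is $n-1$ in $S$?) and on the identity $X_{n-1}^{\otimes 2}=\Lambda X_{n-2}$ in the harder case; your proposal misses this in~(ii) and tries to bypass it in~(i) with a grading argument that is not well-defined.
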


\begin{proof} (i) The proof is by induction in $n$. The base is clear, so we just have to 
justify the induction step. We have 
$$
\Ext^\bullet_{\Ver_{2^{n+1}}^+}(Y,X_S)\cong
\Ext^\bullet_{\Ver_{2^{n}}^+}(\one,(X_S\otimes S^\bullet X_{n-1}\otimes Y^*)^+[y_n]). 
$$
If $n-1\notin S$ then $X_S\in \Ver_{2^n}^+$ so this can be written as 
$$
\Ext^\bullet_{\Ver_{2^{n+1}}^+}(Y,X_S)\cong
\Ext^\bullet_{\Ver_{2^{n}}^+}((Y\otimes S^\bullet X_{n-1}[y_n]^*)^+,X_S)
$$
and the statement follows from the inductive assumption. On the other hand, if 
$n-1\in S$ then setting $S'=S\setminus \lbrace n-1\rbrace$, we have 
$X_S=X_{S'}\otimes X_{n-1}$. So we get 
$$
\Ext^\bullet_{\Ver_{2^{n+1}}^+}(Y,X_S)
\cong\Ext^\bullet_{\Ver_{2^{n}}^+}(\one,X_{S'}\otimes X_{n-1}\otimes (S^\bullet X_{n-1}\otimes Y^*)^-[y_n]),
$$
where the superscript minus sign means that we are taking the part lying in $\Ver_{2^n}^-$. 
But $
(S^\bullet X_{n-1}\otimes Y^*)^-=X_{n-1}\otimes W^\bullet
$
for some $W^\bullet\in \Ver_{2^n}^+$, and $X_{n-1}\otimes X_{n-1}=\Lambda X_{n-2}$ (and some differential on the tensor product whose exact form is not important for this argument). Thus we get 
\begin{align*}
\Ext^\bullet_{\Ver_{2^{n+1}}^+}(Y,X_S)&\cong
\Ext^\bullet_{\Ver_{2^{n}}^+}(\one,X_{S'}\otimes X_{n-1}
\otimes X_{n-1}\otimes W^\bullet[y_n])\\
&\cong\Ext^\bullet_{\Lambda X_{n-2}}(\one,X_{S'}\otimes 
\Lambda X_{n-2}\otimes W^\bullet[y_n])\\
&\cong\Ext^\bullet_{\Ver_{2^{n-1}}}(W^\bullet[y_n]^*,X_{S'}).
\end{align*}
So by Proposition \ref{byzero} the element $y_{n-1}$ acts on this 
space by zero, and the statement again follows from the inductive assumption. 

(ii) By (i) the annihilator is at least as big as claimed, 
and we only need to show that it is not bigger. This is shown 
again by induction in $n$. The base is again easy so we only need to 
do the induction step. If $n-1\notin S$ then 
by Proposition~\ref{injec} we have an inclusion 
$\Ext^\bullet_{\Ver_{2^n}^+}(X_S,X_S)[y_n]\to
\Ext^\bullet_{\Ver_{2^{n+1}}^+}(X_S,X_S)$, 
so the result follows from the inductive assumption for $n-1$. 
On the other hand, if $n-1\in S$ then $X_S=X_{S'}\otimes X_{n-1}$
so 
\begin{align*}
\Ext^\bullet_{\Ver_{2^{n+1}}^+}(X_S,X_S)&\cong
\Ext^\bullet_{\Lambda X_{n-1}}(X_{S'},X_{S'}\otimes\Lambda X_{n-2}) \\
&\cong\Ext^\bullet_{\Lambda X_{n-2}}(X_{S'},X_{S'}\otimes 
\Lambda X_{n-2}\otimes S^{\rm even}X_{n-1}) \\
&\cong\Ext^\bullet_{\Ver_{2^{n-1}}}(X_{S'},X_{S'}\otimes S^{\rm even}X_{n-1}), 
\end{align*}
which contains $\Ext^\bullet_{\Ver_{2^{n-1}}}(X_{S'},X_{S'})=
\Ext^\bullet_{\Ver_{2^{n-1}}^+}(X_{S'},X_{S'})$ as a direct summand 
as $S^{\rm even}X_{n-1}$ contains $\one$ as a direct summand. 
Thus the result again follows from the inductive assumption (this time for $n-2$).  
\end{proof} 

\begin{corollary}\label{co:recu} The rank $r_n$ of the module 
$\Ext^\bullet_{\Ver_{2^{n+1}}}(\one,\one)$ over ${\bf k}[y_1,...,y_n]$ satisfies the equality 
$r_n=r_{n-1}[SX_{n-1}:\one]$. 
\end{corollary}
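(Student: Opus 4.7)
The plan is to combine the additive decomposition of Corollary \ref{rec1} with the torsion-vanishing result of Proposition \ref{pr:tors}(i) to isolate the contribution to the generic rank over $\bk[y_1,\dots,y_n]$. Only the copies of the unit object $\one$ appearing in the symmetric powers of $X_{n-1}$ can contribute nonzero rank; every other simple summand produces a torsion module.

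Concretely, applying Corollary \ref{rec1} with $X=\one$ (and identifying $\Ext^\bullet_{\Ver_{2^{n+1}}}(\one,\one)$ with $\Ext^\bullet_{\Ver_{2^{n+1}}^+}(\one,\one)$ by projection to the $+$ block) yields an isomorphism
$$
\Ext^\bullet_{\Ver_{2^{n+1}}^+}(\one,\one)\;\cong\;\bigoplus_{i\ge 0}\Ext^{\bullet-i}_{\Ver_{2^n}^+}\!\bigl(\one,(S^iX_{n-1})^+\bigr)\otimes_{\bk}\bk[y_n],
$$
where $y_n$ acts by multiplication on the polynomial factor and $y_1,\dots,y_{n-1}$ act through the inductive Ext-structure on $\Ver_{2^n}^+$. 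The main bookkeeping point is verifying that this is a genuine $\bk[y_1,\dots,y_n]$-module isomorphism, not merely additive: the $y_i$-compatibility for $i<n$ follows from the construction of the functor $E_n$, while the $y_n$-compatibility reflects the fact that $y_n$ acts by Yoneda product with the periodicity class of the Koszul resolution, by Proposition \ref{homomo}.

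Decomposing each $(S^iX_{n-1})^+=\bigoplus_S m_{i,S}X_S$ into simples of $\Ver_{2^n}^+$, Proposition \ref{pr:tors}(i) with $Y=\one$ (and $n$ replaced by $n-1$) shows that $\Ext^\bullet_{\Ver_{2^n}^+}(\one,X_S)$ is torsion over $\bk[y_1,\dots,y_{n-1}]$ for every $S\ne\varnothing$; tensoring with the polynomial ring $\bk[y_n]$ preserves torsion over $\bk[y_1,\dots,y_n]$, so these summands contribute zero to the rank. The only surviving summand is $S=\varnothing$: each copy of $X_{\varnothing}=\one$ yields $\Ext^\bullet_{\Ver_{2^n}}(\one,\one)\otimes_{\bk}\bk[y_n]$, whose $\bk[y_1,\dots,y_n]$-rank equals the $\bk[y_1,\dots,y_{n-1}]$-rank $r_{n-1}$ of $\Ext^\bullet_{\Ver_{2^n}}(\one,\one)$. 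Summing the multiplicities over $i$ gives $\sum_i m_{i,\varnothing}=[SX_{n-1}:\one]$, hence $r_n=r_{n-1}[SX_{n-1}:\one]$. Finite generation, needed for the rank to be well-defined, is guaranteed by Proposition \ref{finge}.
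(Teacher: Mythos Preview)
Your approach matches the paper's: apply the recursive decomposition (Corollary~\ref{rec1}, which unwinds equation~\eqref{recu} for $X=\one$) and then invoke Proposition~\ref{pr:tors}(i). However, there is a genuine gap in your argument.

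You write $(S^iX_{n-1})^+=\bigoplus_S m_{i,S}X_S$, decomposing it as a direct sum of simples. This is false: $\Ver_{2^n}^+$ is not semisimple for $n\ge 2$. For example, with $n=3$ the paper explicitly computes $S^2X_2=[\one,X_1]$, an indecomposable length-two object. So you cannot split the Ext into a direct sum over simple summands.

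The repair is to use a Jordan--H\"older filtration together with the long exact sequence. Whenever the simple subquotient $X_S$ has $S\ne\varnothing$, Proposition~\ref{pr:tors}(i) makes $\Ext^\bullet(\one,X_S)$ torsion, hence the connecting map has torsion image and ranks add. The subtle point is that this does \emph{not} prove rank additivity in general: if two copies of $\one$ are adjacent in the filtration, the connecting map can have full rank (e.g.\ for $P(\one)$ in $\Ver_{2^2}^+$ one finds $[P(\one):\one]=2$ but $\Ext^\bullet(\one,P(\one))$ is torsion). What makes the argument go through here is that each $(S^iX_{n-1})^+$ contains $\one$ with multiplicity at most one (Proposition~\ref{pr:multi2}); then after stripping the unique $\one$, the remaining object has only non-unit composition factors, so its Ext is torsion and the relevant connecting map is forced to have torsion image. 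You do not cite this multiplicity fact, and without it the step from ``non-unit simples give torsion'' to ``rank equals $[Z:\one]\cdot r_{n-1}$'' fails.
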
 
\begin{proof} In view of the isomorphism \eqref{recu} applied to $X=\one$, this follows from Proposition \ref{pr:tors}\,(i). 
\end{proof} 

\begin{corollary}
We have $r_n=2^{\frac{n(n-1)}{2}}$. 
\end{corollary}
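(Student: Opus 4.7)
The plan is a straightforward induction on $n$, feeding Proposition~\ref{pr:multi2} into the recursion of Corollary~\ref{co:recu}.

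First I would identify $[SX_{n-1}:\one]$. Under the conventions of the excerpt, $X_{k}$ is the generating object of $\Ver_{2^{k+1}}$, so $X_{n-1}$ plays the role of ``$V$'' inside $\Ver_{2^n}$. Provided $n\ge 2$ (so that $2^n>3$ and Proposition~\ref{pr:multi2} is applicable), I would invoke that proposition inside $\Ver_{2^n}$ to read off $[SX_{n-1}:\one]=2^{n-1}$. Substituting into Corollary~\ref{co:recu} yields the telescopable recursion
\[ r_n = 2^{n-1}\,r_{n-1}\qquad (n\ge 2). \]

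For the base case I would take $n=1$: here $\Ver_{2^2}=\Ver_4$ is simple enough to treat directly, and the introduction already records the verified isomorphism $\Ext^\bullet_{\Ver_4}(\one,\one)\cong\bk[y_1]$, which has rank $1$ over $\bk[y_1]$. So $r_1=1$, and telescoping gives
\[ r_n \;=\; r_1\cdot\prod_{k=2}^{n}2^{k-1} \;=\; 2^{1+2+\cdots+(n-1)} \;=\; 2^{n(n-1)/2}, \]
as claimed.

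The corollary itself presents no real obstacle; the genuine content lives one level down. Specifically, the hard input is Proposition~\ref{pr:multi2}, whose proof uses the Frobenius--Perron dimension formula \eqref{dimsv} together with a trace identity, and Proposition~\ref{pr:tors}(i), which is what licenses Corollary~\ref{co:recu} in the first place by ensuring that, after decomposing $(S^\bullet X_{n-1})^+$ into isotypic components, only the trivial isotypic part $\bigl(\one\bigr)^{\oplus [SX_{n-1}:\one]}$ contributes to the generic rank over $\bk[y_1,\dots,y_n]$ while all the $X_S$ with $S\neq\varnothing$ give torsion.
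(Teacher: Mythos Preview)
Your proof is correct and follows exactly the paper's approach: feed Proposition~\ref{pr:multi2} (giving $[SX_{n-1}:\one]=2^{n-1}$) into the recursion of Corollary~\ref{co:recu}, anchor at $r_1=1$, and telescope. The paper's own proof is a single sentence to the same effect; your added commentary on where the real work lies (Propositions~\ref{pr:multi2} and~\ref{pr:tors}) is accurate but extraneous to the argument.
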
 

\begin{proof} This follows from Corollary~\ref{co:recu} and 
Proposition~\ref{pr:multi2}, using that $r_1=1$. 
\end{proof} 

Recall that the algebra $\Ext^\bullet_{\Ver_{2^{n+1}}}(\one,\one)$ has a $\bZ$-grading coming from the grading on $\Lambda X_{n-1}$, where $y_n$ has degree $2^{n}-1$. Define the field $F_n:={\bf k}(y_1,...,y_{n-1})$, and let 
$r_n(v)$ be the Poincar\'e polynomial of 
$\Ext^\bullet_{\Ver_{2^{n+1}}}(\one,\one)\otimes_{{\bf k}[y_1,...,y_{n-1}]}F_n$ as a module over the algebra $F_n[y_n]$. 
Then the above arguments yield 

\begin{corollary} $\Ext^\bullet_{\Ver_{2^{n+1}}}(\one,\one)\otimes_{{\bf k}[y_1,...,y_{n-1}]}F_n$ is a free $F_n[y_n]$-module, and for $n\ge 2$ 
$$
r_n(v)=2^{\frac{(n-1)(n-2)}{2}}\frac{1-v^{2^{n}}}{1-v^2}=2^{\frac{(n-1)(n-2)}{2}}\sum_{j=0}^{2^{n-1}-1}v^{2j}. 
$$
\end{corollary}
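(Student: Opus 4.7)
I argue by induction on $n$, driven by Corollary~\ref{rec1} applied to $X=\one$. Using Corollary~\ref{cofpdim2} (which kills $S^iX_{n-1}$ for $i>2^n-2$) together with the fact that $X_{n-1}\in\Ver_{2^n}^-$ makes $(S^iX_{n-1})^+=0$ for odd $i$, the recursion becomes
\[
\Ext^\bullet_{\Ver_{2^{n+1}}}(\one,\one)\ \cong\ \bigoplus_{j=0}^{2^{n-1}-1}\Ext^{\bullet-2j}_{\Ver_{2^n}^+}(\one,S^{2j}X_{n-1})\otimes \bk[y_n].
\]
The action of $\bk[y_1,\dots,y_{n-1}]$ affects only the Ext factors and not $\bk[y_n]$, so tensoring over $\bk[y_1,\dots,y_{n-1}]$ with $F_n$ yields
\[
\Ext^\bullet_{\Ver_{2^{n+1}}}(\one,\one)\otimes F_n\ \cong\ \bigoplus_{j=0}^{2^{n-1}-1}\Ext^{\bullet-2j}_{\Ver_{2^n}^+}(\one,S^{2j}X_{n-1})_{F_n}\otimes_{F_n}F_n[y_n],
\]
which is manifestly free as an $F_n[y_n]$-module. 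This disposes of the freeness assertion.

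For the Poincar\'e polynomial the key step is the isomorphism
\[
\Ext^\bullet_{\Ver_{2^n}^+}(\one,S^{2j}X_{n-1})_{F_n}\ \cong\ \Ext^\bullet_{\Ver_{2^n}^+}(\one,\one)_{F_n}
\]
of $F_n$-vector spaces, with the left side sitting in $\bG_m$-weight $2j$ inherited from $S^{2j}X_{n-1}$. I would prove this by a composition-series reduction: by Proposition~\ref{pr:multi2}, $\one$ appears exactly once in $S^{2j}X_{n-1}$ in the relevant range, while every other composition factor is some $X_S$ with $\emptyset\neq S\subset\{1,\dots,n-1\}$. The proof of Proposition~\ref{pr:tors}(i) applies verbatim one level down and produces an $i\in S$ such that $y_i$ annihilates $\Ext^\bullet_{\Ver_{2^n}^+}(\one,X_S)$; since $y_i$ is a unit in $F_n$, this Ext group vanishes after localization, and the long exact sequences collapse everything to the single $\one$-contribution.

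By the inductive hypothesis, $\Ext^\bullet_{\Ver_{2^n}^+}(\one,\one)_{F_n}$ has $F_n$-dimension $r_{n-1}(1)=2^{(n-1)(n-2)/2}$; moreover, in the grading tracked by $r_n(v)$ (the $\bG_m$-weight from scaling $X_{n-1}$), this entire vector space sits in weight $0$, because the computations inside $\Ver_{2^n}^+$ involve only $X_0,\dots,X_{n-2}$ and are therefore fixed by $X_{n-1}$-scaling. Assembling,
\[
r_n(v)\ =\ \sum_{j=0}^{2^{n-1}-1} v^{2j}\cdot r_{n-1}(1)\ =\ 2^{(n-1)(n-2)/2}\,\frac{1-v^{2^n}}{1-v^2},
\]
as required.

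The main obstacle will be the grading bookkeeping: verifying that the $\bG_m$-grading from $\Lambda X_{n-1}$ is respected by the composition-series isomorphism after localization, so that each copy of $\one$ inside $S^{2j}X_{n-1}$ contributes at weight exactly $2j$, and confirming rigorously the inductive ``collapse'' of $\Ext^\bullet_{\Ver_{2^n}^+}(\one,\one)_{F_n}$ into a single weight once $y_{n-1}$ has been inverted.
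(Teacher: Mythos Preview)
Your argument is correct and is precisely what the paper has in mind when it writes ``the above arguments yield''; the paper's proof of the preceding two corollaries already combines Corollary~\ref{rec1} for $X=\one$, the parity observation $(S^iX_{n-1})^+=0$ for odd $i$, Proposition~\ref{pr:tors}(i) to kill the nontrivial composition factors after inverting $y_1,\dots,y_{n-1}$, and Proposition~\ref{pr:multi2} to count the single copy of $\one$ in each $S^{2j}X_{n-1}$, and the present corollary just repeats this while tracking the $v$-grading. Two cosmetic points: you don't actually need induction on $n$ for the current statement, since $r_{n-1}=2^{(n-1)(n-2)/2}$ is already established immediately above (and indeed equals $r_{n-1}(1)$); and rather than saying Proposition~\ref{pr:tors}(i) ``applies verbatim one level down'', you can simply invoke it with $n$ replaced by $n-1$, which is legitimate because the composition factors of $S^{2j}X_{n-1}\in\Ver_{2^n}^+$ are $X_S$ with $S\subset\{1,\dots,n-2\}$.
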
 

This agrees with Conjecture \ref{mainconj}. Also the formula $r_n=2^{\frac{n(n-1)}{2}}$ is now  obtained by evaluating $r_n(v)$ at $v=1$.  

\begin{remark} 
As stated in Conjecture \ref{mainconj}, we expect that 
moreover  $\Ext^\bullet_{\Ver_{2^{n+1}}}(\one,\one)$ is a free 
${\bf k}[y_1,...,y_{n}]$-module (even without localization in $y_1,...,y_{n-1}$). 
\end{remark}

More generally, for every object $X\in \Ver_{2^{n+1}}^+$ we obtain upper bounds 
for the Poincar\'e polynomials of generators of  $\Ext^\bullet_{\Ver_{2^{n+1}}}(\one,X)$, 
\[ r_n(X,z,v):=\sum_{i,j=0}^\infty  z^iv^j\dim\left(\Ext^\bullet_{\Ver_{2^{n+1}}}(\one,X)
\middle/{\textstyle\sum_{k=1}^n} \im(y_k)\right)^{i,j} \]
where $i$ is the cohomological degree and $j$ is the $v$-degree. 

\begin{proposition}\label{bound} For $n\ge 2$ we have
$ r_n(X,z,v)\le r_n^*(X,z,v)$,
in the sense that each coefficient on the left is less than or equal to the
corresponding coefficient on the right, where 
{\small
\[ r_n^*(X,z,v):=
\frac{1}{2^{n-1}}{\rm Tr}\left({\rm FPdim}(X)\frac{1+(zv)^{2^{n}}}{(1-{\rm q}zv)(1-{\rm q}^{-1}zv)}\prod_{j=2}^{n-1}
\frac{1+z^{2^j}}{(1-{\rm q}^{2^{j-1}}z)(1-{\rm
    q}^{-2^{j-1}}z)}\right). \] 
}
In particular, all generators have degree $\le 2^{n+1}-2(n+1)$.
\end{proposition}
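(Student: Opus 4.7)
My plan is to exploit the complex $E(X)$ of equation~\eqref{eofx}, whose cohomology computes $\Ext^\bullet_{\Ver_{2^{n+1}}}(\one,X)$. As a bigraded vector space, $E(X)=F\otimes R$ with $R:=\bk[y_1,\dots,y_n]$ and
\[ F=(S^\bullet X_1\otimes\cdots\otimes(S^\bullet X_{n-2}\otimes(S^\bullet X_{n-1}\otimes X)^+)^+\cdots)^+ \]
finite dimensional (each $S^\bullet X_k$ is supported in degrees $m\le 2^{k+1}-2$ by Corollary~\ref{cofpdim2}\,(iii)); the differential is $R$-linear because multiplication by $y_k$ is the period shift of the corresponding Koszul resolution (Proposition~\ref{homomo}). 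The strategy is first to bound $r_n(X,z,v)$ coefficientwise by the bigraded Poincar\'e series $P_F(z,v)$, and then to identify $P_F$ with the combinatorial expression $r_n^*(X,z,v)$.

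For the bound, write $M:=\Ext^\bullet_{\Ver_{2^{n+1}}}(\one,X)$. I would lift any minimal generating set of $(M/R_+M)^{i,j}$ to cocycle representatives in $E(X)^{i,j}$ and argue that their reductions modulo $R_+E(X)$ remain linearly independent in $F^{i,j}$. This is where the Koszul/periodic structure of $E(X)$ enters: were two reduced cocycles to become dependent in $F$, the Koszul syzygies of $(y_1,\dots,y_n)$ over $R$ — together with the explicit structure of the internal Koszul differential recorded by Lemma~\ref{dif0} in the trivial-module case and its extension to general $X$ — would let us rewrite the defect as an element of $R_+M$, contradicting minimality.

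To compute $P_F$, note that $\one\in\Ver_{2^k}^+$ for every $k$, and in $\Ver_{2^k}^+$ every Jordan--H\"older factor has Vec-dimension $1$; iterated $(-)^+$-projections therefore preserve the multiplicity of $\one$, so
\[ \dim_\bk F^{i,j}=\!\!\!\sum_{\substack{m_1+\cdots+m_{n-1}=i\\ m_{n-1}=j}}\!\!\![S^{m_1}X_1\otimes\cdots\otimes S^{m_{n-1}}X_{n-1}\otimes X:\one]. \]
Applying the trace formula $[Y:\one]=\frac{1}{2^{n-1}}\operatorname{Tr}_{\bQ(\mathrm{q}+\mathrm{q}^{-1})/\bQ}\operatorname{FPdim}(Y)$ for $Y\in\Ver_{2^n}$ from Proposition~\ref{pr:multi2}, together with multiplicativity of $\operatorname{FPdim}$ and the generating series $\sum_m\operatorname{FPdim}(S^mX_k)z^m=(1+z^{2^{k+1}})/((1-\mathrm{q}_kz)(1-\mathrm{q}_k^{-1}z))$ with $\mathrm{q}_k=e^{\pi i/2^{k+1}}$ from the remark after Corollary~\ref{cofpdim2}, and tracking the variable $v$ only on the outermost factor (since $\bG_m$ scales only $X_{n-1}$), assembles $r_n^*(X,z,v)$. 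The top $z$-degree of $F$ is $\sum_{k=1}^{n-1}(2^{k+1}-2)=2^{n+1}-2(n+1)$, which gives the stated bound on generator degrees.

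The main obstacle I anticipate is the coefficient-wise lifting in the bound step: for a generic complex of free $R$-modules with $R$-linear differential, the inequality $\dim(M/R_+M)^{i,j}\le\dim F^{i,j}$ can fail when a cocycle representative lies in $R_+E(X)$ without being expressible as an $R_+$-combination of cocycles. Making the Koszul-syzygy cancellation rigorous — perhaps via collapse of the hypercohomology spectral sequence $\operatorname{Tor}^R_*(\bk,M^\bullet)\Rightarrow H^\bullet(F)$, guaranteed by the periodic-Koszul shape of $E(X)$ — is the technical heart of the proof.
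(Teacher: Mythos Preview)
Your approach is exactly the paper's: compute the bigraded Poincar\'e polynomial of the finite piece $F$ in~\eqref{eofx} via multiplicativity of $\operatorname{FPdim}$, the generating function~\eqref{dimsv}, and the trace formula $[Y:\one]=\tfrac{1}{2^{n-1}}\operatorname{Tr}\operatorname{FPdim}(Y)$, and read off $\deg_z r_n^*=\sum_{k=1}^{n-1}(2^{k+1}-2)=2^{n+1}-2(n+1)$. The paper's own proof is two sentences and does not address the lifting issue you isolate --- it simply says the bound ``follows from the form of $E(X)$ \ldots\ by a direct computation''.

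Your worry about that step is legitimate and not over-caution: for a general complex $C=F\otimes R$ of free modules with $R$-linear differential, the coefficientwise inequality $\dim(H(C)/R_+H(C))_i\le\dim F_i$ can fail. For instance, with $R=\bk[y_1,y_2]$, $|y_i|=1$, $F=\langle e_1,e_2,f\rangle$ concentrated in degree~$0$, $de_i=y_if$, $df=0$, the Koszul syzygy $[y_2e_1-y_1e_2]$ is a minimal generator of $H(C)$ in degree~$1$, where $F_1=0$. So some feature of the specific iterated-Koszul construction of $E(X)$ is implicitly being invoked, and neither your sketch nor the paper makes that step explicit. Your spectral-sequence suggestion points in the right direction, but as written your proposal is already at least as complete as the paper's proof and more honest about where the substance lies.
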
 

\begin{proof} 
The bound for $r_n(X,z,v)$ 
follows from the form of $E(X)$ given in \eqref{eofx} and formula \eqref{dimsv} by a direct computation. This implies the bound on the degree of generators, since the degree of $r_n^*$ with respect to $z$ is 
$2^{n+1}-2(n+1)$. 
\end{proof} 

In particular, for $X=\one$ we get 

\begin{corollary}\label{bound1}
$\Ext^\bullet_{\Ver_{2^{n+1}}}(\one,\one)$ is a finitely generated module 
over ${\bf k}[y_1,...,y_n]$ with Poincar\'e polynomial of generators 
$r_n(z,v)\le r_n^*(z,v)$, where 
\[ r_n^*(z,v):=
\frac{1}{2^{n-1}}{\rm Tr}\left(\frac{1+(zv)^{2^{n}}}{(1-{\rm q}zv)(1-{\rm q}^{-1}zv)}\prod_{j=2}^{n-1}
\frac{1+z^{2^j}}{(1-{\rm q}^{2^{j-1}}z)(1-{\rm q}^{-2^{j-1}}z)}\right). \] 
 In particular, 
all the generators have degree at most $2^{n+1}-2(n+1)$, and 
there is exactly one generator of that degree.  
Moreover, the Poincar\'e polynomial of generators is palindromic, i.e., satisfies the equation 
$P(z)=z^{2^{n+1}-n-1}P(z^{-1})$. 
\end{corollary}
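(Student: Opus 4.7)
The plan is to specialize Proposition~\ref{bound} to $X=\one$ and read off the sharper structural claims from the explicit form of $r_n^*(z,v)$. Since $\mathrm{FPdim}(\one)=1$, Proposition~\ref{bound} immediately yields both the stated formula for $r_n^*(z,v)$ and the bound $r_n(z,v)\le r_n^*(z,v)$, while finite generation of $\Ext^\bullet_{\Ver_{2^{n+1}}}(\one,\one)$ over $\bk[y_1,\ldots,y_n]$ is Proposition~\ref{finge} applied to $X=\one$.

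To obtain the degree bound and the uniqueness of the top-degree generator, I would expand the rational function inside the trace and track the highest power of $z$. After clearing denominators by multiplying out the Galois-invariant combinations $\prod_j(1-\mathrm{q}^{2^{j-1}}z)(1-\mathrm{q}^{-2^{j-1}}z)$, the top $z$-degree contribution to the numerator comes from the single monomial $(zv)^{2^n}\prod_{j=2}^{n-1}z^{2^j}$. A direct degree count then yields a polynomial of top $z$-degree $2^{n+1}-2(n+1)$, and the leading coefficient reduces to the trace of a single constant, which after normalization by $1/2^{n-1}$ equals $1$, giving the uniqueness.

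The palindromic property is the most delicate point. I would verify it by exploiting a symmetry of $r_n^*$ under $z\mapsto z^{-1}$: each factor $\frac{1+z^{2^j}}{(1-\mathrm{q}^{2^{j-1}}z)(1-\mathrm{q}^{-2^{j-1}}z)}$ is, up to multiplication by a fixed power of $z$, invariant under this substitution, and a similar statement holds for the initial $(zv)$-weighted factor. Accumulating these shifts across the $n$ factors and checking that the Galois summation preserves the symmetry should produce the stated palindromic identity. The main obstacle will be the careful bookkeeping of exponents together with the interaction between the $v$-weighted factor and the Galois trace: one has to verify that all the $\mathrm{q}$-dependent rescalings cancel exactly, producing the stated palindromic shift, and separately that the leading-term degree count correctly produces the claimed exponent.
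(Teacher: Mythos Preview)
Your approach to finite generation, the inequality $r_n(z,v)\le r_n^*(z,v)$, and the degree bound is correct and matches the paper: all of these follow by specializing Proposition~\ref{bound} to $X=\one$.

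There is, however, a genuine gap in your treatment of the palindromic property and the uniqueness of the top-degree generator. You propose to establish both by analyzing the explicit formula for $r_n^*$. But the statement concerns $r_n$, the \emph{actual} Poincar\'e polynomial of generators of $\Ext^\bullet_{\Ver_{2^{n+1}}}(\one,\one)$, not its upper bound $r_n^*$. The coefficientwise inequality $r_n\le r_n^*$ does not transfer palindromicity: a polynomial bounded above by a palindromic polynomial need not itself be palindromic. Likewise, showing that the top coefficient of $r_n^*$ equals $1$ only gives that the top coefficient of $r_n$ is \emph{at most}~$1$; it does not show that a generator in that degree actually exists. (Already for $n=3$ one has $r_3\ne r_3^*$, so the distinction is real.)

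The paper's argument bypasses $r_n^*$ entirely for this part. The finite complex $E(\one)/(y_1,\dots,y_n)$ is self-dual, since each $S^jX_i$ is self-dual and the operation $(-)^+$ preserves self-duality. Self-duality of a finite complex forces self-duality of its cohomology, which is precisely the palindromic identity for $r_n$. Once palindromicity is known, the constant term of $r_n$ equals~$1$ (there is exactly one generator in degree~$0$), and the palindrome then forces the top-degree coefficient to be~$1$ as well, giving the claimed uniqueness.
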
 

\begin{proof}
It only remains to show that the Poincar\'e polynomial of the generators 
is palindromic, which follows from the fact that the complex 
$E(\one)$ is self-dual. 
\end{proof} 

\begin{remark} Note that according to Conjecture \ref{mainconj}, the degree bound of Corollary~\ref{bound1} 
is expected to be sharp: the largest degree of a generator is expected to equal 
$2^{n+1}-2(n+1)$, with exactly one generator in that degree. 
On the other hand, the bound $r_n\le r_n^*$ is rather 
poor: we have $\log_2(r_n^*(1,1))\sim n^2$ as $n\to \infty$, while 
$\log_2(r_n(1,1))=\frac{n(n-1)}{2}$. This is not surprising, 
as this bound does not take into account the fact that the 
complex $E(\one)$ has a nontrivial differential for $n\ge 3$ and 
shrinks drastically when we compute its cohomology.  
\end{remark} 

\begin{example} 1. Let $n=2$. Then we have 
$E(\one)=(SX_1)^+[y_1,y_2]$. But $S^0X_1=S^2X_1$ $=\one$, $S^1X_1=X_1$, 
and all the other symmetric powers are zero. 
Thus, $(SX_1)^+={\bf k}\oplus {\bf k}w$, where $w$ has cohomological 
degree $2$. Also in this case it is easy to see that the differential in 
$E(\one)$ is zero (so the bound $r_2^*(z,v)=1+(zv)^2$ is sharp). Thus 
$\Ext^\bullet_{\Ver_{2^3}}(\one,\one)$ is a free ${\bf k}[y_1,y_2]$ module 
of rank $2$ with generators of degree $0$ and $2$, which agrees 
with the result of \cite{Benson/Etingof:2019a}. 

2. Let $n=3$. Let $S^i:=S^iX_2$. Then one can show by a direct 
computation that 
\[ S^0=\one,\ 
S^1=X_2,\  
S^2=[\one,X_1],\ 
S^3=X_1\otimes X_2,\ 
S^4=[X_1,\one],\ 
S^5=X_2,\ 
S^6=\one, \]
and all the other symmetric powers are zero (where 
$Y=[Y_1,...,Y_m]$ means that $Y$ is a uniserial 
object with composition series $Y_1,\dots,Y_m$, with head $Y_1$ and socle $Y_m$). 
Thus $\Ext^\bullet_{\Ver_{2^4}}(\one,\one)$ is isomorphic to
%\begin{gather*}
\[ \Ext^\bullet_{\Ver_{2^3}}(\one,\one)[0]\oplus
  \Ext^\bullet_{\Ver_{2^3}}(\one,[\one,X])[2] 
\oplus 
\Ext^\bullet_{\Ver_{2^3}}(\one,[X,\one])[4]\oplus
\Ext^\bullet_{\Ver_{2^3}}(\one,\one)[6], \]
%\end{gather*}
where $X=X_1$ and the numbers in square brackets are degree shifts. Now, 
consider the portion of the long exact sequence 
\begin{equation}\label{longexa}
{\rm Hom}(\one,X)\to \Ext^1(\one,\one)\to 
\Ext^1(\one,[X,\one])\to \Ext^1(\one,X)\to \Ext^2(\one,\one), 
\end{equation} 
where $\Ext$ groups are taken in $\Ver_{2^3}$. It was shown in \cite{Benson/Etingof:2019a} that the Poincar\'e series 
of $\Ext^\bullet(\one,X)$ is $\frac{z}{1-z^3}$. Also we have 
$\dim\Ext^1(\one,[X,\one])\ge 2$ since we have two different 
nontrivial extensions of $\one $ by $[X,\one]$, namely 
$[\one\oplus X,\one]$ and $[\one,X,\one]$ (both indecomposable quotients of
the projective cover of $\one$ in $\Ver_{2^3}$). Thus the dimension of
$\Ext^1(\one,[X,\one])$ is two, and   
the sequence \eqref{longexa} looks like
$0\to \bold k\to \bold k^2\to \bold k\to \bold k$.
This implies that the last map in this sequence 
(the connecting homomorphism 
$\Ext^1(\one,X)\to \Ext^2(\one,\one)$) is zero. Since the map 
$\Ext^\bullet(\one,X)\to \Ext^{\bullet+1}(\one,\one)$ is linear 
over $\bold k[y_2]$, we see that this map is zero in all degrees
(as $\Ext^\bullet(\one,X)$ is a free $\bold k[y_2]$-module on one generator in degree $1$). Thus, 
$\Ext^\bullet(\one,[X,\one])\cong \Ext^\bullet(\one,X)\oplus 
\Ext^\bullet(\one,\one)$, so the Poincar\'e series of 
$\Ext^\bullet(\one,[X,\one])$ is $\frac{1+z}{(1-z)(1-z^3)}$. 

Now, the object $[X,\one,\one,X]$ is the projective cover of $X$. This implies that 
\[ \Ext^\bullet(\one,[\one,X])\cong
  \Ext^{\bullet+1}(\one,[X,\one]). \] 
Thus the Poincar\'e series 
of $\Ext^\bullet(\one,[\one,X])$ is $\frac{z+z^2}{(1-z)(1-z^3)}$. 
Altogether we obtain that the Poincar\'e series of 
$\Ext^\bullet_{\Ver_{2^4}}(\one,\one)$ is given by the formula 
\begin{align*}
h(z,v)&=\frac{(1+(vz)^6)(1+z^2)+(vz)^2(z+z^2)+(vz)^4(1+z)}{(1-z)(1-z^3)(1-(vz)^7)}\\
&=\frac{1+z^2+v^2z^3+(v^2+v^4)z^4+v^4z^5+v^6z^6+v^6z^8}{(1-z)(1-z^3)(1-(vz)^7)}.
\end{align*}
One can check directly that $\Ext^\bullet_{\Ver_{2^4}}(\one,\one)$ 
is a free module over $\bold k[y_1,y_2,y_3]$.  
Thus the Poincar\'e polynomial of its generators is 
$$
r_3(z,v)=1+z^2+v^2z^3+(v^2+v^4)z^4+v^4z^5+v^6z^6+v^6z^8. 
$$
On the other hand, it is easy to compute that 
\begin{align*}
r_3^*(z,v)&=1+(1+v^2)z^2+2v^2z^3+(v^2+v^4)z^4+2v^4z^5+(v^4+v^6)z^6+v^6z^8\\
&=r_3(z,v)+v^2(z^2+z^3)+v^4(z^5+z^6).
\end{align*}
This means that the differential in the complex 
$E(\one)/(y_1,y_2,y_3)$ acts as a rank $1$ operator between degrees 
$2\to 3$ and $5\to 6$ and otherwise acts by zero. In other words, when computing the cohomology of this complex, we kill two elements 
of cohomological degrees $2,3$ in $v$-degree $2$ and two elements of cohomological degrees $5,6$ in $v$-degree $4$. 

It is instructive to write down the complex $E(\one)$ explicitly. We have 
\[ E(\one)=M^+[y_1,y_2,y_3],\ M:=SX_1\otimes (SX_2)^+. \]
The components of $M$ are as follows (with $X:=X_1$):
\begin{gather*}
M^0=\one,\quad 
M^1=X,\quad 
M^2=\one\oplus [\one,X],\quad 
M^3=X\otimes [\one,X]=[X,\one,\one],\\
M^4=[\one,X]\oplus [X,\one],\quad
M^5=X\otimes [X,\one]=[\one,\one,X],\\ 
M^6=\one\oplus [X,\one],\quad 
M^7=X,\quad 
M^8=\one.
\end{gather*}
Thus, $E(\one)$ has the following components (as $\Lambda \one$-modules): 
\begin{gather*}
E^0=\one,\quad 
E^1=0,\quad 
E^2=\one\oplus \one,\quad 
E^3=[\one,\one],\quad
E^4=\one\oplus \one,\\
E^5=[\one,\one],\quad
E^6=\one\oplus\one,\quad 
E^7=0,\quad 
E^8=\one.
\end{gather*} 
The differential maps $E^2=\one\oplus \one\to E^3=[\one,\one]$,
$E^5=[\one,\one]\to E^6=\one\oplus \one$, both by rank $1$ operators, and is zero in other degrees. 
\end{example} 

\subsection{\texorpdfstring
{Ext computations for $p>2$}
{Ext computations for 𝑝 > 2}}

In this section we would like to generalise some of the results of the previous section to the case $p>2$. The constructions and formulas are very similar to the case $p=2$ but not exactly the same due to presence of the invertible object $\psi$ and some other differences, so we chose to repeat them. 

As in the case $p=2$, we can use the resolution $Q_\bullet$ to give the following recursive procedure 
for computation of the additive structure of the cohomology 
$\Ext^\bullet_{\Ver_{p^{n+1}}}(\one,X)$ (for indecomposable $X$). 

For $k\ge 1$ we will denote the generating 
object of $\Ver_{p^{k}}$ by $X_{k-1}$ and recall (\cite{Benson/Etingof/Ostrik}, Subsection 4.14) that 
the principal block $\Ver_{p^{k+1}}^0$ of $\Ver_{p^{k+1}}$ is naturally equivalent 
to the category of $\Lambda X_{k-1}$-modules in 
$\Ver_{p^{k}}$. Let us denote this equivalence by $F$; i.e., 
for an object $X\in \Ver_{p^{k+1}}^0$ we denote the corresponding $\Lambda X_{k-1}$-module by $FX$. 

In the Yoneda realization of ${\rm Ext}$, the Koszul complex $K^\bullet=S^\bullet X_{k-1}\otimes \Lambda X_{k-1}$ represents a class $\tau_k\in\Ext^{p^k-1}_{\Lambda X_{k-1}}(\one,\psi)$, and the class $y_k:=\tau_k^2$ of degree $2(p^k-1)$ is represented by the concatenation 
of $S^\bullet X_{k-1}\otimes \Lambda X_{k-1}$ with $S^\bullet X_{k-1}\otimes \Lambda X_{k-1}\otimes \psi$, which we will denote by $S^\bullet X_{k-1}\otimes \Lambda X_{k-1} \otimes S^\bullet\psi_{k-1}$, where $\psi_{k-1}$ is $\psi$ sitting in degree $p^k-1$. Thus 
$Q_\bullet=S^\bullet X_{k-1}[y_k]\otimes \Lambda X_{k-1}\otimes S^\bullet\psi_{k-1}$.

If $X\in \Ver_{p^n}$ but $X\notin \Ver_{p^n}^0$ then we  have 
$\Ext^\bullet_{\Ver_{p^n}}(\one,X)=0$. So, it suffices to compute 
$\Ext^\bullet_{\Ver_{p^{n+1}}^0}(\one,X)$ for $X\in \Ver_{p^{n+1}}^0$. 
In that case, we have 
\begin{align*}
\Ext^\bullet_{\Ver_{p^{n+1}}^0}(\one,X)&\cong 
\Ext^\bullet_{\Lambda X_{n-1}}(\one,FX)\cong \Ext^\bullet_{\Lambda X_{n-1}}(\one,Q_\bullet\otimes FX)\\
&\cong\Ext^\bullet_{\Lambda X_{n-1}}(\one,S^\bullet X_{n-1}[y_n]
\otimes\Lambda X_{n-1}\otimes S^\bullet \psi_{n-1}\otimes FX) \\
&\cong\Ext^\bullet_{\Ver_{p^n}}(\one,S^\bullet X_{n-1}[y_n]\otimes S^\bullet \psi_{n-1}
\otimes FX) \\
&\cong\Ext^\bullet_{\Ver_{p^n}^0}(\one,(S^\bullet X_{n-1}[y_n]\otimes S^\bullet \psi_{n-1}\otimes FX)^0),
\end{align*}
where the superscript zero means that we are taking the part 
lying in $\Ver_{p^n}^0$, and in the last two expressions 
$FX$ is regarded as an object of $\Ver_{p^n}$ 
using the corresponding forgetful functor 
$\Lambda X_{n-1}$-${\rm mod}\to \Ver_{p^{n}}$ 
forgetting the structure of a $\Lambda X_{n-1}$-module. 

The same calculation applies if $X$ is a complex, i.e., an object 
of the derived category $D^+(\Ver_{p^{n+1}}^0)$ of $\Ver_{p^{n+1}}^0$. 
Namely, for an object $X\in D^+(\Ver_{p^{n+1}}^0)$, let 
\begin{align*} 
E_n(X)&:=\underline{{\rm Hom}}_{\Lambda X_{n-1}}
(\one,S^\bullet X_{n-1}[y_n]\otimes \Lambda X_{n-1}\otimes S^\bullet
  \psi_{n-1}\otimes FX)^0 \\
&=(S^\bullet X_{n-1}[y_n]\otimes S^\bullet\psi_{n-1} \otimes FX)^0. 
\end{align*}
This gives an additive functor $E_n\colon D^+(\Ver_{p^{n+1}}^0)\to 
D^+(\Ver_{p^{n}}^0)$. 

The following lemma is a straightforward analog of Lemma \ref{dif0}.  

\begin{lemma}\label{dif0p}
If $X\in \mathcal \Ver_{p^{n}}$ (with trivial action of $\Lambda X_{n-1}$) then the differential in the complex $E_n(X)$ is zero. 
\end{lemma}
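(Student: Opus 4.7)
The strategy is simply to adapt the proof of Lemma~\ref{dif0} to the present slightly more elaborate situation. The essential content of Lemma~\ref{dif0} is the classical fact that if $V$ is a finite-dimensional vector space, then the Koszul differential on $S^\bullet V\otimes \Lambda V$ induces the zero map on the socle, i.e.\ on $\underline{\rm Hom}_{\Lambda V}(\bk, S^\bullet V\otimes \Lambda V)\cong S^\bullet V\otimes \Lambda^{\rm top}V$. Here what we need is the categorical analog inside $\Ver_{p^n}$, applied to the resolution $Q_\bullet = S^\bullet X_{n-1}[y_n]\otimes \Lambda X_{n-1}\otimes S^\bullet \psi_{n-1}$, tensored with an object $FX$ on which $\Lambda X_{n-1}$ acts trivially.

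First, I would observe that tensoring with $FX$ plays no role in computing the differential: because $\Lambda X_{n-1}$ acts trivially on $FX$, the object $FX$ is passed through the functor $\underline{\rm Hom}_{\Lambda X_{n-1}}(\one,-)$ as an external tensor factor, and any differential on the complex $E_n(X)$ is induced purely from the differential on $\underline{\rm Hom}_{\Lambda X_{n-1}}(\one, S^\bullet X_{n-1}[y_n]\otimes \Lambda X_{n-1}\otimes S^\bullet \psi_{n-1})$. Since the polynomial factors in $y_n$ and $\psi_{n-1}$ are external to the $\Lambda X_{n-1}$-module structure, the problem reduces to showing that the Koszul differential on $\Lambda X_{n-1}$ yields the zero map on $\underline{\rm Hom}_{\Lambda X_{n-1}}(\one, S^\bullet X_{n-1}\otimes \Lambda X_{n-1})$, which by Corollary~\ref{cofrob} and the identification of $\Lambda^{\rm top}X_{n-1}$ with the socle equals $S^\bullet X_{n-1}$ (with an appropriate degree shift).

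Next, I would argue that this vanishing is a formal consequence of the structure of the Koszul complex. In the categorical setting the Koszul differential $d\colon S^\bullet V\otimes \Lambda^b V\to S^\bullet V\otimes \Lambda^{b-1}V$ (with $V=X_{n-1}$) is by definition built out of the coproduct on $\Lambda V$ combined with the multiplication on $SV$. An element in the socle, i.e.\ in $S^\bullet V\otimes \Lambda^2 V$, is already of top exterior degree. When we compose the inclusion of the socle into $S^\bullet V\otimes \Lambda V$ with $d$, we land in $S^\bullet V\otimes \Lambda^1 V = S^\bullet V\otimes V$. But the induced differential on $\underline{\rm Hom}_{\Lambda V}(\one,-)$ is obtained by taking the $\Lambda V$-module map of this image, and any element of $S^\bullet V\otimes V$ lies outside the socle (is not annihilated by $V$) unless it is zero. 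Hence the composed map lands in zero already at the level of $\Lambda V$-module maps out of $\one$. This is precisely the generalization of the classical fact invoked in the proof of Lemma~\ref{dif0}.

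The argument just described carries over verbatim to the case $p>2$ once one keeps track of the twist by $\psi_{n-1}$, which is merely an invertible external factor and does not interfere with the vanishing. The only mild obstacle is notational: the resolution $Q_\bullet$ is antiperiodic with period $p^n-1$ instead of periodic, so one must verify the vanishing for both the ``$\psi$-untwisted'' and ``$\psi$-twisted'' pieces of $Q_\bullet$; but since $\psi$ is invertible and commutes with everything, this is immediate. Therefore the differential on $E_n(X)$ vanishes, as claimed.
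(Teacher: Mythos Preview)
Your proposal is correct and follows essentially the same approach as the paper, which simply declares the lemma a ``straightforward analog of Lemma~\ref{dif0}'' and in turn proves Lemma~\ref{dif0} by invoking the classical vanishing of the Koszul differential on $\mathrm{Hom}_{\Lambda V}(\bk,S^\bullet V\otimes\Lambda V)=S^\bullet V$. Your write-up supplies more detail than the paper does (reducing to the socle in top exterior degree and observing the image lands in a disjoint graded piece), which is fine; the one small point you glide over is the ``seam'' differential where the resolution passes from the untwisted to the $\psi$-twisted copy of the Koszul complex, but that map factors through $\Lambda^0 V$ and hence also kills the socle $\Lambda^2 V$, so the same argument applies.
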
 

\begin{corollary}\label{rec1p} 
Suppose $X\in \mathcal \Ver_{p^n}$. Then we have an isomorphism 
\begin{align*}
\Ext^\bullet_{\Ver_{p^{n+1}}^0}(\one,X)&\cong 
\bigoplus_{i\ge 0}\Ext^\bullet_{\Ver_{p^n}}
(\one,S^iX_{n-1}[y_n]\otimes S^\bullet \psi_{n-1}\otimes FX)\\
&=\bigoplus_{i\ge 0}\Ext^\bullet_{\Ver_{p^n}^0}(\one,(S^iX_{n-1}[y_n]
\otimes S^\bullet \psi_{n-1}\otimes FX)^0).
\end{align*}
This isomorphism maps the grading induced by the grading on $\Lambda X_{n-1}$
to the grading defined by $\deg(X_{n-1})=1$, 
$\deg(y_n)=2p^n-2$, $\deg (\psi_{n-1})=p^n-1$ (i.e., it coincides with the cohomological grading).  
\end{corollary}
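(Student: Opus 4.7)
The plan is to reduce Corollary~\ref{rec1p} directly to the recursion
$\Ext^\bullet_{\Ver_{p^{n+1}}^0}(\one,X)\cong \Ext^\bullet_{\Ver_{p^n}^0}(\one,E_n(X))$
established in the paragraph immediately preceding Lemma~\ref{dif0p}, by invoking that lemma to kill the differential on $E_n(X)$ whenever $X\in \Ver_{p^n}$. First I would observe that since $X\in \Ver_{p^n}$, the object $FX$ is a trivial $\Lambda X_{n-1}$-module, so Lemma~\ref{dif0p} applies and tells us that the differential on
\[ E_n(X)=(S^\bullet X_{n-1}[y_n]\otimes S^\bullet\psi_{n-1}\otimes FX)^0 \]
vanishes. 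Consequently, $E_n(X)$ splits in the derived category $D^+(\Ver_{p^n}^0)$ as the direct sum of its pieces $(S^i X_{n-1}[y_n]\otimes S^\bullet\psi_{n-1}\otimes FX)^0$ (indexed by the symmetric power degree $i\ge 0$), each placed in its appropriate cohomological degree.

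Next I would apply $\Ext^\bullet_{\Ver_{p^n}^0}(\one,-)$ to this direct sum decomposition. Since $\Ext$ from $\one$ distributes over the direct sum (the sum is locally finite in each degree by the finiteness in Proposition~\ref{pr:exac} and Corollary~\ref{cofpdim2}), we obtain
\[ \Ext^\bullet_{\Ver_{p^{n+1}}^0}(\one,X)\cong \bigoplus_{i\ge 0}\Ext^\bullet_{\Ver_{p^n}^0}(\one,(S^i X_{n-1}[y_n]\otimes S^\bullet\psi_{n-1}\otimes FX)^0). \]
To replace the superscript $0$ by the full object and obtain the first stated form of the decomposition, I would invoke that $\one$ lies in the principal block, so for any $Y\in \Ver_{p^n}$ the blocks other than $\Ver_{p^n}^0$ contribute nothing to $\Ext^\bullet_{\Ver_{p^n}}(\one,Y)$, and hence $\Ext^\bullet_{\Ver_{p^n}}(\one,Y)=\Ext^\bullet_{\Ver_{p^n}^0}(\one,Y^0)$.

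Finally, for the grading statement I would trace through the construction of the resolution $Q_\bullet = S^\bullet X_{n-1}[y_n]\otimes \Lambda X_{n-1}\otimes S^\bullet\psi_{n-1}$. The Yoneda class $\tau_n$ of the Koszul complex $K^\bullet$ lives in $\Ext^{p^n-1}(\one,\psi)$, so the variable $\psi_{n-1}$, which records one shift by the length of $K^\bullet$, carries cohomological degree $p^n-1$. Its square $y_n=\tau_n^2$ therefore has cohomological degree $2(p^n-1)=2p^n-2$. The generator $X_{n-1}$ appears in cohomological degree $1$ of the Koszul complex, so $\deg(X_{n-1})=1$. Comparing this with the internal $\Lambda X_{n-1}$-grading on $Q_\bullet$, which transports to the claimed grading under the recursion, gives the final identification.

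The argument is essentially bookkeeping: the only nontrivial input is Lemma~\ref{dif0p}, which has already been disposed of. The mildest obstacle is merely verifying that the grading conventions match between the Koszul resolution and the internal grading on $\Lambda X_{n-1}$, for which it is safest to check the statement on generators ($X_{n-1}$, $y_n$, $\psi_{n-1}$) and extend by multiplicativity.
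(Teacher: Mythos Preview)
Your proposal is correct and follows exactly the paper's approach: the paper's proof of the analogous Corollary~\ref{rec1} (for $p=2$) is simply ``Follows immediately from Lemma~\ref{dif0},'' and Corollary~\ref{rec1p} is left without an explicit proof, being understood to follow from Lemma~\ref{dif0p} in the same way. You have spelled out that deduction in detail---the vanishing of the differential on $E_n(X)$ splits the complex, the recursion before Lemma~\ref{dif0p} then gives the direct sum, and the block argument recovers the two displayed forms---which is precisely the intended argument. (Minor point: the finiteness you need is Corollary~\ref{cofpdim2}(iii), not Proposition~\ref{pr:exac}.)
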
 

As for $p=2$, Corollary \ref{rec1p} does not quite give a recursion to compute the Ext groups, since the object $(S^iX_{n-1}\otimes S^\bullet\psi_{n-1}\otimes FX)^0$ may not belong to $\Ver_{p^{n-1}}$ (i.e., may carry a nontrivial action of $\Lambda X_{n-2}$). However, it has some useful consequences given below. 

\begin{proposition} 
For $n\ge 1$ we have 
\[ \Ext^\bullet_{D^+(\Ver_{p^{n+1}}^0)}(\one,X)=\Ext^\bullet_{D^+(\Ver_{p^n}^0)}(\one,E_n(X)). \]
\end{proposition}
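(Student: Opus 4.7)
The plan is to reproduce, at the level of complexes, the five-line chain of isomorphisms displayed in the text immediately above the proposition statement. Since $X$ is now an object of $D^+(\Ver_{p^{n+1}}^0)$ rather than an object of $\Ver_{p^{n+1}}^0$, each isomorphism must be interpreted in the bounded-below derived category and justified by the general machinery.

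First I would use the equivalence $F\colon \Ver_{p^{n+1}}^0\simeq \Lambda X_{n-1}\text{-mod}$ in $\Ver_{p^n}$, extended termwise to an equivalence of bounded-below derived categories, to rewrite
\[ \Ext^\bullet_{D^+(\Ver_{p^{n+1}}^0)}(\one,X)\cong \Ext^\bullet_{D^+(\Lambda X_{n-1}\text{-mod})}(\one,FX). \]
Next, I would use the periodic free $\Lambda X_{n-1}$-resolution
\[ Q_\bullet \;=\; S^\bullet X_{n-1}[y_n]\otimes \Lambda X_{n-1}\otimes S^\bullet\psi_{n-1} \]
of $\one$, whose acyclicity (away from degree $0$) is guaranteed by Corollary~\ref{co:koszul}. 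Replacing $\one$ by $Q_\bullet$ and forming the total complex of the resulting double complex, I obtain
\[ \Ext^\bullet_{D^+(\Lambda X_{n-1}\text{-mod})}(\one,FX)\cong H^\bullet\!\left(\underline{\Hom}_{\Lambda X_{n-1}}(Q_\bullet,FX)\right). \]

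The next step is the crucial Shapiro/change-of-rings identification. Because $\Lambda X_{n-1}$ is a Frobenius algebra in $\Ver_{p^n}$, each term of $Q_\bullet\otimes FX$ is a free $\Lambda X_{n-1}$-module, hence simultaneously coinduced from $\Ver_{p^n}$. Applying $\underline{\Hom}_{\Lambda X_{n-1}}(\one,-)$ to a coinduced module simply strips off the $\Lambda X_{n-1}$-structure, so termwise
\[ \underline{\Hom}_{\Lambda X_{n-1}}\!\left(\one,\,S^i X_{n-1}[y_n]\otimes \Lambda X_{n-1}\otimes S^\bullet\psi_{n-1}\otimes FX\right)\;\cong\; S^i X_{n-1}[y_n]\otimes S^\bullet\psi_{n-1}\otimes FX, \]
and the identification is compatible with the Koszul differential and with the differential of $FX$. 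Passing to cohomology gives
\[ \Ext^\bullet_{D^+(\Lambda X_{n-1}\text{-mod})}(\one,FX)\cong \Ext^\bullet_{D^+(\Ver_{p^n})}\!\left(\one,\,S^\bullet X_{n-1}[y_n]\otimes S^\bullet\psi_{n-1}\otimes FX\right). \]

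Finally, because Ext from $\one$ vanishes into any non-principal block of $\Ver_{p^n}$, only the principal-block summand of the second argument contributes. Projecting onto this summand produces exactly $E_n(X)$ by definition, and identifies the right-hand side with $\Ext^\bullet_{D^+(\Ver_{p^n}^0)}(\one,E_n(X))$. The resulting isomorphism is natural in $X$ because every step is: the equivalence $F$, replacement of $\one$ by a fixed resolution, Shapiro's lemma, and projection onto a block are all natural constructions.

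The only place that requires mild care is the passage from objects to complexes, since one must form a total complex of a double complex one of whose directions is infinite (the resolution $Q_\bullet$). This is harmless in $D^+$: the complex $FX$ is bounded below, the cohomological grading from $Q_\bullet$ is also bounded below in any fixed total degree (each total degree is a finite sum), and so the totalisation converges and commutes with $H^\bullet$. I do not expect any essential obstacle beyond this bookkeeping; the proof is the exact analog of the $p=2$ argument, the only genuinely new feature being the invertible object $\psi$, which is already absorbed into the definition of $Q_\bullet$ via the factor $S^\bullet\psi_{n-1}$.
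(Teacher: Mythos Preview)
Your proposal is correct and follows exactly the paper's approach: the paper gives no separate proof of this proposition, merely noting that ``the same calculation applies if $X$ is a complex,'' referring to the five-line chain of isomorphisms displayed just above, and your argument is precisely an elaboration of that chain at the level of $D^+$ together with the routine totalisation check. One small slip: in your second display you write $\underline{\Hom}_{\Lambda X_{n-1}}(Q_\bullet,FX)$, but the surrounding text and the next paragraph make clear you mean $\underline{\Hom}_{\Lambda X_{n-1}}(\one,Q_\bullet\otimes FX)$, using $Q_\bullet\otimes FX$ as a free (hence injective) resolution of $FX$; this is the paper's formulation and is what the Shapiro step you describe actually computes.
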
 

This implies the following corollary. Let 
$E=E_1\circ\dots\circ E_n\colon \mathcal \Ver_{p^{n+1}}^0\to \Ver_{p}^0={\rm Vec}$. 

\begin{corollary} 
We have a linear isomorphism 
\[ \Ext^\bullet_{D^+(\Ver_{p^{n+1}}^0)}(\one,X)\cong H^\bullet(E(X)). \] 
\end{corollary}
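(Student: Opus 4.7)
The plan is to obtain the corollary by iterating the preceding proposition $n$ times, reducing the target category step by step until we land in $\Ver_p^0 = \mathrm{Vec}$, where Ext from $\one$ is just cohomology.

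More precisely, starting from $X\in D^+(\Ver_{p^{n+1}}^0)$, a single application of the proposition gives
\[ \Ext^\bullet_{D^+(\Ver_{p^{n+1}}^0)}(\one,X)\cong \Ext^\bullet_{D^+(\Ver_{p^n}^0)}(\one,E_n(X)). \]
Since $E_n(X)\in D^+(\Ver_{p^n}^0)$, the proposition applies again (with $n$ replaced by $n-1$) to yield
\[ \Ext^\bullet_{D^+(\Ver_{p^n}^0)}(\one,E_n(X))\cong \Ext^\bullet_{D^+(\Ver_{p^{n-1}}^0)}(\one,E_{n-1}E_n(X)). \]
Continuing this way and using the fact that $E=E_1\circ\cdots\circ E_n$, an obvious induction on $n$ produces the chain of isomorphisms
\[ \Ext^\bullet_{D^+(\Ver_{p^{n+1}}^0)}(\one,X)\cong \Ext^\bullet_{D^+(\Ver_{p}^0)}(\one,E(X))=\Ext^\bullet_{D^+(\mathrm{Vec})}(\one,E(X)). \]

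To conclude, I would invoke the standard fact that for any complex $Y^\bullet\in D^+(\mathrm{Vec})$ one has $\Ext^\bullet_{D^+(\mathrm{Vec})}(\bk,Y^\bullet)=H^\bullet(Y^\bullet)$, since $\bk$ is projective as an object of $\mathrm{Vec}$ and hence $\mathrm{RHom}(\bk,-)=\mathrm{Hom}(\bk,-)$ just reads off each term of the complex, so derived Hom agrees with cohomology. Applying this with $Y^\bullet=E(X)$ gives the desired identification with $H^\bullet(E(X))$, and the naturality/linearity of the isomorphism is automatic from that of each $E_i$.

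There is essentially no obstacle here: the only thing to verify is that each application of the previous proposition is legitimate, which requires only that $E_n$ indeed lands in $D^+(\Ver_{p^n}^0)$ (already built into its definition) and that the proposition is stated uniformly for all $n\ge 1$, both of which are given. So the argument is a purely formal induction whose content is entirely in the proposition and in the triviality of $D^+(\mathrm{Vec})$.
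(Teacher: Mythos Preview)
Your argument is correct and is exactly the approach the paper takes: the corollary is stated immediately after the proposition with the words ``This implies the following corollary,'' with no further proof given, since it follows by iterating the proposition down to $\Ver_p^0=\mathrm{Vec}$ and using that $\Ext^\bullet_{D^+(\mathrm{Vec})}(\one,-)=H^\bullet(-)$.
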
 

The complex of vector spaces $E(X)$ has the following structure: 
\begin{multline*} 
E(X)=
(S^\bullet X_0\otimes S^\bullet\psi_0\otimes F(S^\bullet X_1\otimes S^\bullet\psi_1\otimes
\cdots \\ 
\cdots \otimes 
F(S^\bullet X_{n-1}\otimes S^\bullet\psi_{n-1}\otimes
FX)^0\dots)^0)^0[y_1,y_2,\dots,y_n], 
\end{multline*}
and it is easy to see as in the case $p=2$ that the differential is linear over ${\bf k}[y_1,\dots,y_n]$. 
Thus we get
 
\begin{proposition}\label{fingep} 
For any $X\in \Ver_{p^{n+1}}$, $\Ext^\bullet_{\Ver_{p^{n+1}}}(\one,X)$ 
is a graded finitely generated module over ${\bf k}[y_1,\dots,y_n]$. 
\end{proposition}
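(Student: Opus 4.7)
The plan is to adapt the proof of Proposition \ref{finge} (the $p=2$ analog), exploiting the structural description of $E(X)$ just developed. First I would reduce to the principal block: if $X\notin \Ver_{p^{n+1}}^0$ then $\Ext^\bullet_{\Ver_{p^{n+1}}}(\one,X)=0$ and there is nothing to prove, so we may assume $X\in \Ver_{p^{n+1}}^0$. The corollary preceding the proposition then gives
\[
\Ext^\bullet_{\Ver_{p^{n+1}}}(\one,X)\cong H^\bullet(E(X)),
\]
reducing the question to showing that $E(X)$ is a finite complex of finitely generated free modules over $\mathbf{k}[y_1,\dots,y_n]$.

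Peeling off the layers of $E=E_1\circ\dots\circ E_n$, each stage has the form
\[
E_k(Y)=(S^\bullet X_{k-1}[y_k]\otimes S^\bullet \psi_{k-1}\otimes FY)^0.
\]
By Corollary \ref{cofpdim2}(iii), $S^j X_{k-1}=0$ for $j>p^k-2$, so $S^\bullet X_{k-1}$ is finite-dimensional. The factor $S^\bullet\psi_{k-1}$ only records the two cohomological-degree shifts $\one$ and $\psi_{k-1}$ that encode the antiperiod of $Q_\bullet$; once the principal-block projection $(-)^0$ is applied, the non-principal summand $\psi_{k-1}\otimes FY$ vanishes (since $\psi$ permutes blocks and $FY$ already sits in the principal block). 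The factor $[y_k]$ merely adjoins the polynomial generator of degree $2(p^k-1)$. Iterating from $k=n$ down to $k=1$, so that each layer introduces one polynomial variable over a finite-dimensional base, we obtain a presentation
\[
E(X)\cong M\otimes_{\mathbf{k}} \mathbf{k}[y_1,\dots,y_n]
\]
where $M$ is a finite-dimensional graded complex of $\mathbf{k}$-vector spaces.

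The differential on $E(X)$ is $\mathbf{k}[y_1,\dots,y_n]$-linear for the same reason as in the $p=2$ case: multiplication by $y_k$ is induced by the periodicity (shift of cohomological degree $2(p^k-1)$) in the resolution $Q_\bullet$ over $\Lambda X_{k-1}$, and hence commutes with every connecting map appearing in the iterated construction. Therefore $E(X)$ is a finite complex of finitely generated free modules over the Noetherian ring $\mathbf{k}[y_1,\dots,y_n]$, and its cohomology $H^\bullet(E(X))$ is a finitely generated $\mathbf{k}[y_1,\dots,y_n]$-module, as required.

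The main obstacle is a bookkeeping subtlety absent in the $p=2$ case: the handling of the $\psi$-factors. One must verify that after applying the principal-block projection at each stage, the $S^\bullet \psi_{k-1}$ layer really does collapse so as to contribute only the polynomial generator $y_k$, rather than infinitely many $\psi$-shifted copies, and that the various $y_k$-actions arising from different stages of the iteration mutually commute and respect all the differentials. Once this verification is carried out, the finiteness statement is essentially formal.
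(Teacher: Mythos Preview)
Your approach is exactly the paper's: the paper simply notes the displayed structure of $E(X)$ and the $\bk[y_1,\dots,y_n]$-linearity of its differential, and concludes. Your write-up is a correct spelling-out of this.

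One detail is wrong, though harmless for the finiteness conclusion. You assert that ``the non-principal summand $\psi_{k-1}\otimes FY$ vanishes'' under the projection $(-)^0$. This is false: the projection is applied to $S^\bullet X_{k-1}\otimes S^\bullet\psi_{k-1}\otimes FY$, and for suitable $i$ the object $S^iX_{k-1}\otimes\psi\otimes FY$ can lie in the principal block even when $S^iX_{k-1}\otimes FY$ does not (for instance $S^{p^k-2}X_{k-1}=\psi$ by Corollary~\ref{cofpdim2}(i), so $S^{p^k-2}X_{k-1}\otimes\psi=\one$). The $\psi$-shifted half of each period genuinely contributes. What matters for finiteness is only that $S^\bullet\psi_{k-1}=\one\oplus\psi_{k-1}$ is two-dimensional---which you already identified correctly---so each layer remains finite over the new polynomial variable. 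Consequently your closing worry about ``infinitely many $\psi$-shifted copies'' is unfounded: the notation $S^\bullet\psi_{k-1}$ was introduced to denote precisely the two-term object $\one\oplus\psi[p^k-1]$ encoding the antiperiod, not an infinite symmetric algebra, and since $\psi$ is an odd invertible object one has $S^j\psi=0$ for $j\ge 2$ in any case.
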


In particular, we get that 
\[ \Ext^\bullet_{\Ver_{p^{n+1}}}(\one,\one)=
\Ext^\bullet_{\Ver_{p^{n+1}}^0}(\one,\one)=H^\bullet(E(\one)), \]
where 
\begin{multline*}
E(\one)=
(S^\bullet X_0\otimes S^\bullet\psi_0\otimes F(S^\bullet X_1\otimes S^\bullet\psi_1\otimes
\cdots \\ 
\otimes 
F(S^\bullet X_{n-1}\otimes S^\bullet\psi_{n-1})^0\dots)^0)^0[y_1,y_2,\dots,y_n]. 
\end{multline*}
Note that we have $1\in E(\one)$ and $d(1)=0$, so we obtain a 
natural linear map 
\[ \phi\colon {\bf k}[y_1,\dots,y_n]\to 
\Ext^\bullet_{\Ver_{p^{n+1}}}(\one,\one). \] 

\begin{proposition} For $1\le i\le n$ %the operator of 
multiplication 
by $y_i$ on $\Ext^\bullet_{\Ver_{p^{n+1}}}(\one,X)$ coincides 
with the cup product with $\phi(y_i)$. In particular, $\phi$ is an algebra homomorphism. 
\end{proposition}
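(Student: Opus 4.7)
The plan is to adapt the proof of Proposition \ref{homomo} from the $p=2$ case, using induction on $n$. The structure of the argument is identical: once we establish that multiplication by $y_i$ on the Ext groups coincides with cup product by $\phi(y_i)$, the multiplicativity of $\phi$ is automatic from $\phi(ab) = (ab)\cdot 1 = a\cdot(b\cdot 1) = a\cdot \phi(b) = \phi(a)\phi(b)$, since $1\in E(\one)$ is the natural cycle representing the unit.

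For $i<n$, the claim reduces to the inductive hypothesis. Indeed, the action of $y_i$ on the complex $E(X)$ is induced from its action on $E_{n-1}\circ\cdots\circ E_1(E_n(X))$, where $E_n(X)$ is viewed as an object of $D^+(\Ver_{p^n}^0)$. Since the outer recursive procedure computing $\Ext^\bullet_{\Ver_{p^n}^0}(\one, E_n(X))$ is precisely the one appearing in the inductive step, and since the comparison between the $y_i$-action and the cup product with $\phi(y_i)$ is functorial in $X$, the result for $i<n$ follows from the inductive hypothesis applied to $E_n(X)\in D^+(\Ver_{p^n}^0)$.

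For $i=n$, the key point is the interpretation of $y_n$ in the Yoneda realization of Ext. By construction, $y_n=\tau_n^2\in \Ext^{2(p^n-1)}_{\Lambda X_{n-1}}(\one,\one)$ is represented by the concatenation $S^\bullet X_{n-1}\otimes \Lambda X_{n-1}\otimes S^\bullet \psi_{n-1}$ of two Koszul complexes (the second one twisted by $\psi$), which is precisely the portion of $Q_\bullet$ of cohomological degree $2(p^n-1)$. Therefore cup product with $\phi(y_n)$ in $\Ext^\bullet_{\Ver_{p^{n+1}}^0}(\one,X)$ is realized as Yoneda concatenation with this complex. On the other hand, multiplication by $y_n$ on $E(X)$ is defined exactly as the shift by $2(p^n-1)$ in the $y_n$ variable, which at the level of the resolution $Q_\bullet$ corresponds to translating by one period of the antiperiodic resolution; this translation realizes the same Yoneda concatenation. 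Passing through the chain of isomorphisms that build $E(X)$ shows these two operations agree.

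The only subtlety relative to the $p=2$ argument is the presence of the invertible object $\psi$: the Koszul complex itself represents the class $\tau_n$ of odd total degree $p^n-1$ in $\Ext^{p^n-1}_{\Lambda X_{n-1}}(\one,\psi)$, and only its square $y_n = \tau_n^2$ lies in $\Ext^\bullet(\one,\one)$. This is the main bookkeeping hurdle: one must verify that the squaring is compatible with the shift in $y_n$ in $E(X)$, which follows from the description $Q_\bullet = S^\bullet X_{n-1}[y_n]\otimes \Lambda X_{n-1}\otimes S^\bullet \psi_{n-1}$ where the variable $y_n$ is precisely the full two-fold period (since odd powers of $\tau_n$ would land in the $\psi$-isotypic component, which is correctly tracked by $S^\bullet \psi_{n-1}$).
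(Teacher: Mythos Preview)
Your proposal is correct and follows essentially the same approach as the paper: induction on $n$, with the case $i<n$ reduced to the inductive hypothesis and the case $i=n$ handled by realizing $\phi(y_n)$ as Yoneda concatenation with the complex $K^\bullet\otimes S^\bullet\psi_{n-1}$, the only new feature being the extra factor $S^\bullet\psi_{n-1}$ accounting for the invertible object $\psi$. Your write-up is in fact more detailed than the paper's, which simply refers back to the $p=2$ case; one small slip is the order of composition in ``$E_{n-1}\circ\cdots\circ E_1(E_n(X))$'', which should read $E_1\circ\cdots\circ E_{n-1}(E_n(X))$ to match $E=E_1\circ\cdots\circ E_n$.
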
 

\begin{proof} The proof is the same as that of Proposition \ref{homomo}, using that 
$\phi(y_n)$ can be realised as  Yoneda product 
with the complex $K^\bullet\otimes S^\bullet\psi_{n-1}$, where
$K^\bullet$ is the Koszul complex. The only difference is the presence of the additional factor 
$S^\bullet\psi_{n-1}$. 
\end{proof} 

\begin{proposition}\label{injecp} 
For $X\in \Ver_{p^{n}}^0$ the natural map 
\[ \Ext^\bullet_{\Ver_{p^n}}(\one,X)[y_n]\to
\Ext^\bullet_{\Ver_{p^{n+1}}}(\one,X) \]
is an injective morphism of ${\bf k}[y_1,...,y_n]$-modules 
which is also a morphism of algebras for $X=\one$. 
\end{proposition}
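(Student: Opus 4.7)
The plan is to mirror the short proof of Proposition~\ref{injec} (the case $p=2$), substituting Corollary~\ref{rec1p} for Corollary~\ref{rec1}. The core point is that $\one$ appears as a direct summand of the ``coefficient object'' attached to the lowest summand of the decomposition, which will force the map in the statement to be a split injection.

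First I will expand the right-hand side via Corollary~\ref{rec1p}, rewriting
\[ \Ext^\bullet_{\Ver_{p^{n+1}}}(\one,X) \cong \bigoplus_{i\ge 0}\Ext^\bullet_{\Ver_{p^n}^0}\bigl(\one,(S^iX_{n-1}\otimes S^\bullet\psi_{n-1}\otimes FX)^0\bigr)[y_n]. \]
In the $i=0$ summand one has $S^0X_{n-1}=\one$, while the degree-zero piece of $S^\bullet\psi_{n-1}$ is also $\one$. Hence $FX$ is a direct summand of $(S^\bullet\psi_{n-1}\otimes FX)^0$, and so $\Ext^\bullet_{\Ver_{p^n}}(\one,X)[y_n]$ appears as a direct summand of the full $\Ext$ group on the right-hand side. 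The resulting inclusion is what I aim to identify with the natural map in the statement. Linearity over $\bk[y_1,\dots,y_{n-1}]$ is inherited from the fact that these classes arise at deeper levels of the iterated construction, while linearity over $\bk[y_n]$ is built into the $[y_n]$-factor of the decomposition.

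For the multiplicative statement with $X=\one$, I will view the natural map as the composition of the algebra homomorphism $\Ext^\bullet_{\Ver_{p^n}}(\one,\one)\to\Ext^\bullet_{\Ver_{p^{n+1}}}(\one,\one)$ induced by the tensor inclusion $\Ver_{p^n}\hookrightarrow\Ver_{p^{n+1}}$, followed by the polynomial extension sending $y_n\in\bk[y_n]$ to the class $\phi(y_n)$. The latter assignment extends to an algebra map because, by the analogue of Proposition~\ref{homomo} for $p>2$ proved just above, cup product with $\phi(y_n)$ coincides with the action of $y_n$ on $\Ext^\bullet_{\Ver_{p^{n+1}}}(\one,-)$.

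The step requiring the most care is verifying that the splitting produced by the $i=0$, degree-zero-in-$\psi_{n-1}$ piece coincides, as a morphism of $\bk[y_1,\dots,y_n]$-modules, with the natural map in the statement, i.e., with the map induced by the symmetric tensor inclusion $\Ver_{p^n}\hookrightarrow \Ver_{p^{n+1}}$ tensored with polynomial extension in $y_n$. This is essentially a bookkeeping check: one traces the Shapiro-lemma identification through the definition of $E_n$, and confirms that the Yoneda class representing $y_n$ restricts compatibly on the $i=0$ summand.
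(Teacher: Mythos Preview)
Your proposal is correct and follows essentially the same route as the paper's proof: both use the decomposition from Corollary~\ref{rec1p} and observe that $\one$ occurs as a direct summand of $(S^\bullet X_{n-1}\otimes S^\bullet\psi_{n-1})^0$ (your $i=0$, degree-zero-in-$\psi_{n-1}$ piece), which, tensored with $X\in\Ver_{p^n}^0$, yields the desired split inclusion. Your additional remarks on identifying the split inclusion with the natural map and on the algebra structure for $X=\one$ are fine elaborations of points the paper leaves implicit.
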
 

\begin{proof} 
This follows from the isomorphism 
$$
\Ext^\bullet_{\Ver_{p^{n+1}}}(\one,X)\cong 
\Ext^\bullet_{\Ver_{p^{n}}}(\one,(S^\bullet X_{n-1}
\otimes S^\bullet \psi_{n-1})^0\otimes X)[y_n]
$$
since $\one$ is a direct summand of $(S^\bullet X_{n-1}\otimes S^\bullet \psi_{n-1})^0$. 
\end{proof}

\section{Some further computations}

In order to search for similar patterns for 
 $\Ext^{\bullet}_{\Ver_{p^{n+1}}}(\one,S)$ with $S$ simple in the principal
block, it makes sense to compute a number of examples. For instance, the simplest case 
$n=1$ can be computed using the theory of Brauer tree algebras, and the answer 
is as follows.

Let $X_0,...,X_{N-1}$ label the simple modules for a chain-shaped Brauer tree algebra of length $N$, in the order they occur in the Brauer tree (note that in the case of Verlinde categories $N=p-1$ and $X_0=\one$). Then we have 

\begin{proposition}\label{dudas} (\cite{Dudas}) 
The Poincare series of $\Ext^\bullet(X_i,X_j)$ is given by the formula  
$$
\sum_{k=0}^\infty t^k\dim {\rm Ext}^k(X_i,X_j)=\frac{Q_{ijN}(t)+t^{2N-1}Q_{ijN}(t^{-1})}{1-t^{2N}},
$$
where
$Q_{ijN}(t):=t^{|i-j|} +t^{|i-j|+2} + \cdots + t^{N-1-|N-1-i-j|}$. 
\end{proposition}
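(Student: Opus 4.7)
The plan is to compute $\Ext^\bullet(X_i, X_j)$ directly from the explicit structure of minimal projective resolutions over a chain-shaped Brauer tree algebra, using the periodicity of syzygies.

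First I would recall that a chain-shaped Brauer tree algebra $A$ with $N$ edges and trivial multiplicity is a symmetric special biserial algebra whose projective covers have explicit Loewy structure
\[ P_i \colon \quad X_i \,\longrightarrow\, X_{i-1}\oplus X_{i+1} \,\longrightarrow\, X_i \qquad (0<i<N-1), \]
with the obvious modification at the two endpoint edges (one summand of the heart absent). Consequently every syzygy $\Omega^k(X_i)$ is an indecomposable string module whose top and socle can be read off combinatorially. In particular the formula
\[ \dim\Ext^k(X_i,X_j) \;=\; \dim\underline{\mathrm{Hom}}_A(\Omega^k(X_i),X_j) \;=\; [\mathrm{top}(\Omega^k(X_i)):X_j], \]
valid for $k\ge 1$ (and for $k=0$ when $i\ne j$), reduces the problem to locating the top of each syzygy.

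Second I would describe the sequence of tops of $\Omega^k(X_i)$ via Green's walk around the Brauer tree: one alternates endpoints of the current edge, producing at each step an adjacent edge. For the chain tree, this walk is one-dimensional and reflects at each of the two leaves, and it has period exactly $2N$; this yields $\Omega^{2N}(X_i)\cong X_i$ and hence the denominator $1-t^{2N}$. Within one period, I would enumerate the values of $k\in\{0,1,\dots,2N-1\}$ at which the walker, starting at edge $i$, visits edge $j$. A direct inspection shows that these $k$ form two sets, each an arithmetic progression with common difference $2$, from $|i-j|$ up to $N-1-|N-1-i-j|$, shifted by $0$ and by $2N-1$ respectively — precisely the exponents contributing to $Q_{ijN}(t)$ and to $t^{2N-1}Q_{ijN}(t^{-1})$.

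Third, summing the geometric series over all periods gives exactly the claimed generating function. The two summands $Q_{ijN}(t)$ and $t^{2N-1}Q_{ijN}(t^{-1})$ reflect an Auslander–Reiten–type symmetry coming from the fact that $A$ is symmetric, which pairs syzygies at degree $k$ with cosyzygies at degree $2N-1-k$.

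The main obstacle is the correct bookkeeping of the reflections of Green's walk at the two leaves of the chain. These reflections are responsible for the upper bound $\min(i+j,\,2N-2-i-j)$ in the exponent range of $Q_{ijN}$, and for the $t\leftrightarrow t^{-1}$ symmetry between the two halves of one period; verifying this involves a mildly delicate case split according to whether the walk, starting from edge $i$ in one of the two possible initial directions, reaches edge $j$ before or after bouncing off a leaf.
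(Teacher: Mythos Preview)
The paper does not supply a proof of this proposition; it is attributed to Dudas (thanked in the acknowledgements for communicating it) and is stated without argument, so there is no paper proof to compare against.

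Your strategy of reading off $\dim\Ext^k(X_i,X_j)=[\mathrm{top}\,\Omega^k(X_i):X_j]$ and exploiting the $2N$-periodicity of $\Omega$ is correct and is the natural route. The gap is in your invocation of Green's walk. The single-edge-per-step walk you describe governs the $\Omega$-orbit of the \emph{uniserial arm modules}; that orbit contains the endpoint simples $X_0$ and $X_{N-1}$ but none of the interior ones. For $0<i<N-1$ one has $\Omega(X_i)=\mathrm{rad}\,P_i$ with non-simple top $X_{i-1}\oplus X_{i+1}$, so two indecomposable projectives already appear at the first step of the minimal resolution, and the higher syzygies are string modules not described by a single walk, nor by the union of the two obvious walks from edge $i$. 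For instance, with $N=4$ and $i=1$ a direct calculation gives $\mathrm{top}\,\Omega^2(X_1)=X_1\oplus X_3$ (consistent with the formula, which yields $\Ext^2(X_1,X_1)=\Ext^2(X_1,X_3)=\bk$ and $\Ext^2(X_1,X_0)=\Ext^2(X_1,X_2)=0$), whereas the two reflecting walks from edge $1$ reach edges $0$ and $3$ at step $2$.

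Your argument therefore establishes the formula only for $i\in\{0,N-1\}$, which is exactly the Example stated after the proposition. To obtain the general case one must either compute the string-module syzygies of an interior $X_i$ directly---feasible, using the explicit ``add/delete a hook at each end'' description of $\Omega$ on string modules over special biserial algebras---or relate $X_i$ to arm modules via short exact sequences (for example $0\to[X_{i-1},X_i]\to\Omega(X_i)\to X_{i+1}\to 0$, where $[X_{i-1},X_i]$ lies in the $\Omega$-orbit of $X_0$) and run the associated long exact sequences in $\Ext$.
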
 

\begin{example} If $i=0$, Proposition \ref{dudas} gives
$$
\sum_{k=0}^\infty t^k\dim {\rm Ext}^k(X_0,X_j)=\frac{t^j+t^{2N-1-j}}{1-t^{2N}}. 
$$
\end{example} 

We also computed $\Ext^{\bullet}_{\Ver_{p^3}}(\one,S)$ for $S$ simple in the
cases $p=2$ and $p=3$. For $p=2$, by the results of
\cite{Benson/Etingof:2019a}, 
we have the following (note that $L_0=\one$):
\[
\sum_{i=0}^\infty t^i \dim\Ext^i_{\Ver_{2^3}}(\one,\one)=
\frac{1+t^2}{(1-t)(1-t^3)},\quad
\sum_{i=0}^\infty t^i \dim\Ext^i_{\Ver_{2^3}}(\one,L_2)=
\frac{t}{1-t^3}. \]

For $p=3$, the Poincar\'e series 
computed using {\sc Magma} agree at least up to degree $100$
with the following (again $L_0=\one$):
\begin{align*}
\sum_{i=0}^\infty t^i \dim\Ext^i_{\Ver_{3^3}}(\one,\one)&=
\frac{1+t^3+t^6+2t^7+t^8+t^{10}+2t^{11}+t^{12}+t^{15}+t^{18}}{(1-t^4)(1-t^{16})}\\
\sum_{i=0}^\infty t^i \dim\Ext^i_{\Ver_{3^3}}(\one,L_4)&=
\frac{t+t^4+t^5+t^6+t^8+2t^9+t^{10}+t^{12}+t^{13}+t^{14}+t^{17}}
{(1-t^4)(1-t^{16})}\\
\sum_{i=0}^\infty t^i \dim\Ext^i_{\Ver_{3^3}}(\one,L_6)&=
\frac{t^2+t^{13}}{1-t^{16}}\\
\sum_{i=0}^\infty t^i \dim\Ext^i_{\Ver_{3^3}}(\one,L_{10})&=
\frac{t^2+2t^3+t^4+t^7+t^8+t^{10}+t^{11}+t^{14}+2t^{15}+t^{16}}
{(1-t^4)(1-t^{16})}\\
\sum_{i=0}^\infty t^i \dim\Ext^i_{\Ver_{3^3}}(\one,L_{12})&=
\frac{t+t^2+t^4+t^5+t^6+2t^9+t^{12}+t^{13}+t^{14}+t^{16}+t^{17}}
{(1-t^4)(1-t^{16})}\\
\sum_{i=0}^\infty t^i \dim\Ext^i_{\Ver_{3^3}}(\one,L_{16})&=
\frac{t^5+t^{10}}{1-t^{16}}.
\end{align*}

\bibliographystyle{amsplain}
\bibliography{../repcoh}

\end{document}